\newcommand{\R}{\mathbb{R}}
\newcommand{\lie}[1]{\mathfrak{#1}}     
\newcommand{\C}{\mathbb{C}}
\newcommand{\Gtwo}{\mathrm{G}_2}
\newcommand{\GL}{\mathrm{GL}}
\newcommand{\G}{\mathrm{G}_2}
\theoremstyle{plain}
\newtheorem{proposition}{Proposition}
\newtheorem{theorem}[proposition]{Theorem}
\newtheorem{lemma}[proposition]{Lemma}
\newtheorem{corollary}[proposition]{Corollary}
\theoremstyle{definition}
\newtheorem{definition}[proposition]{Definition}
\newtheorem{example}[proposition]{Example}
\theoremstyle{remark}
\newtheorem{remark}{Remark}
\newcommand{\Span}[1]{\operatorname{Span}\left\{#1\right\}}
\begin{document}

\title{Coclosed $\Gtwo$-structures inducing nilsolitons}
\author{Leonardo Bagaglini, Marisa Fern\'andez and Anna Fino}
\subjclass[2000]{Primary 53C15, 22E25;
Secondary 53C38, 17B30}
\keywords{cocalibrated $\Gtwo$ forms, half-flat structures, 
nilpotent Lie algebras, nilsolitons, contact forms}
\maketitle

\begin{abstract}
We show obstructions to the existence of a coclosed $\Gtwo$-structure
on a Lie algebra $\lie{g}$ of dimension seven with non-trivial center.
In particular, we prove that if there exist  a Lie algebra epimorphism 
from $\lie{g}$ to a six-dimensional Lie algebra $\lie{h}$, with kernel contained in the center of $\lie{g}$, then any 
coclosed $\Gtwo$-structure on $\lie{g}$ induces a closed and stable three form on $\lie{h}$ that defines an almost complex structure
on $\lie{h}$. As a consequence, we obtain a classification of the
2-step nilpotent Lie algebras which carry coclosed $\Gtwo$-structures.
We also prove that each one of these Lie algebras has a  coclosed $\Gtwo$-structure inducing a nilsoliton metric,
but this is not true
for 3-step nilpotent Lie algebras with  coclosed $\Gtwo$-structures.  The existence of contact metric structures is also studied.
\end{abstract}

\section{Introduction}\label{sec:intro}

A $7$-dimensional smooth manifold $M$ admits a $\Gtwo$-structure if there is a reduction of the structure group 
of its frame bundle from $\mathrm{Gl}(7,\R)$ to the  exceptional Lie group $\Gtwo$, which can actually be viewed naturally as 
a subgroup of $\mathrm{SO}(7)$. 
Therefore, a $\Gtwo$-structure determines a Riemannian metric and an orientation on $M$. 
The presence of a $\Gtwo$-structure is equivalent to the existence of 
a positive $3$-form, where
the positivity is a natural nondegeneracy condition (see section \ref{algebraic} for details).  Such a 3-form 
$\varphi$ defines a unique Riemannian metric $g_{\varphi}$ and an orientation on $M$. 

 Whenever this 3-form $\varphi$ is covariantly constant
with respect to the Levi-Civita connection of $g_{\varphi}$ or, equivalently, the intrinsic torsion
of the $\Gtwo$-structure vanishes \cite{Salamon}, then the holonomy group is contained
in $\Gtwo$, and this happens if and only if the $3$-form $\varphi$ is closed and coclosed \cite{FernandezGray}.
A $\Gtwo$-structure is called  {\it closed} if the $3$-form  $\varphi$ is closed, and a $\Gtwo$-structure is said to be {\it coclosed} if the $3$-form  $\varphi$ is coclosed.
Usually these two classes  of $\Gtwo$-structures are very different in nature, 
being the closed  condition much more restrictive; for example, coclosed $\Gtwo$-structures always exist on closed spin manifolds and 
satisfy the parametric $h-$principle (\cite{CN}).

In \cite{CF}, Conti and the second author classified the nilmanifolds endowed with an invariant closed $\Gtwo$-structure. 
By a nilmanifold $M$ we mean a compact manifold which is a quotient $M=\Gamma \backslash G$, where $G$ is a connected, 
simply conneced and nilpotent Lie group,
and $\Gamma\subset\,G$ is a lattice. By  Mal'cev Theorem \cite{Malcev}, a lattice $\Gamma\subset\,G$ exists if and only if the
the Lie algebra $\lie{g}$ of $G$ has a basis such that the structure constants of $\lie{g}$ are rational numbers.
Therefore, any nilmanifold is parallelizable, and so spin for any  Riemannian metric.
Then, by Theorem 1.8 from \cite{CN}, every nilmanifold  has a coclosed $\G$-structure.

In this paper we study the existence of invariant coclosed $\Gtwo$-structures on 2-step nilmanifolds  $\Gamma \backslash G$.     
Since invariant  differential forms on $\Gamma \backslash G$ are uniquely determined by forms on  the Lie algebra $\lie{g}$ of $G$,  
we can restrict our attention to seven dimensional nilpotent  Lie algebras classified in \cite{Gong}. We show two necessary 
conditions that a Lie algebra must satisfy if it supports a coclosed $\Gtwo$-structure (see section \ref{sec:sec3}, Corollary \ref{obs1} and Lemma \ref{obs3}).
It turns out that if the seven dimensional nilpotent Lie algebra  $\frak g$ is 2-step decomposable, then it admits a 
coclosed $\Gtwo$-structure (section \ref{sec:sec4}, Corollary \ref{cor:2-stepcoc}), but if $\frak g$ is indecomposable, up to isomorphism, only  $7$  of the $9$  
indecomposable 2-step nilpotent Lie algebras  carry a coclosed $\Gtwo$-structure
(section \ref{sec:sec5}, Theorem \ref{indecomposable-cocalibrated}).  As a result we obtain that  there exist $2$-step nilmanifolds 
which  admit a coclosed $\G$-structure, but not an invariant one.

Nilpotent Lie groups cannot admit  left invariant Einstein metrics. Natural generalizations of Einstein metrics are given by Ricci solitons, which
have been introduced by Hamilton in \cite{Hamilton}.  All known examples of nontrivial homogeneous Ricci solitons are left invariant
metrics on simply connected solvable Lie groups, whose Ricci operator satisfies
the condition  $Ric(g) = \lambda I + D,$
for some $\lambda \in \R$ and some derivation $D$ of the corresponding Lie algebra. The left
invariant metrics satisfying the previous condition are called nilsolitons if the Lie
groups are nilpotent \cite{La1}. Not all nilpotent Lie groups admit nilsoliton metrics,
but if a nilsoliton exists, then it is unique up to automorphism and scaling \cite{La1}.

Closed $\Gtwo$-structures inducing nilsolitons have been studied in \cite{FFM}.  A natural question is thus to see  how is restrictive to impose that 
a  left-invariant coclosed $\Gtwo$-structure on a nilpotent Lie group induces a nilsoliton metric.  In section \ref{sect-nilsoliton} we prove that  all  
the $2$-step nilpotent Lie groups  
admitting a left-invariant coclosed $\Gtwo$-structure have  a coclosed $\Gtwo$-structure inducing a nilsoliton
(see Theorem \ref{nilsoliton-indecomp} and Theorem \ref{nilsoliton-decomposable}). However, this property is not true for higher steps.
Indeed, in Example \ref{example:1}, we show that there exists  a $3$-step nilpotent Lie group  admitting a nilsoliton and a left-invariant coclosed $\Gtwo$-structure,
 but not having any coclosed $\Gtwo$-structure inducing the nilsoliton.

Given a smooth manifold M of dimension seven, one can ask the existence
not only of  a $\Gtwo$-structure  but also of a contact structure. By \cite{ACS} every manifold with $\Gtwo$-structure admits an almost contact structure and two  types of compatibility between  contact and $\Gtwo$-structures have been studied.  
 In section \ref{contact}, we show that if a $2$-step nilmanifold  $\Gamma \backslash G$ admits a contact metric  
$(g, \eta)$ such that   the metric $g$  is induced by a  coclosed  $\Gtwo$-structure, then $G$  is isomorphic to the  $7$-dimensional  
Heisenberg Lie group (Proposition \ref{prop:contact}),
 and therefore  $\Gamma \backslash G$  has an invariant  Sasakian structure.
 In higher step,  this property is not true. Indeed, in Example \ref{example:2},
 we construct a $3$-step (not Sasakian)  $7$-dimensional nilpotent Lie algebra  admitting  a $K$-contact metric structure $(\eta, g)$ such that the 
  metric $g$  is determined by a  coclosed  $\Gtwo$-structure.

{\it Acknowledgements. }
We are grateful to Diego Conti for useful comments. 
The f{}irst  and third author are partially supported by INdAM. The second author is partially supported through Project MICINN (Spain) MTM2014-54804-P and 
Basque Government Project IT: 1094-16. 

\section{Algebraic preliminaries on stable forms}\label{algebraic}

In this section, we collect some results about stable forms on 
an n-dimensional real vector space. We focus on the existence of stable forms in dimension $n=6$ and $n=7$ 
\cite{CLSS, Fei, Hitchin, 
Rei, Schulte}.

Let $V$ be a real vector space of dimension $n$. Consider the representation of the general linear group ${\GL}(V)$ on 
the space $\Lambda^k(V^*)$ of $k$-forms on $V$. An element $\rho\in\Lambda^k(V^*)$ is said to be \emph{stable} if its orbit 
under ${\GL}(V)$ is open in $\Lambda^k(V^*)$. 

In the following proposition, we recall the values of $k$ and $n$, for which there exist open orbits in $\Lambda^k(V^*)$ under  the action of ${\GL}(V)$,
and so for which  there exist stable $k$-forms on $V$. 

\begin{proposition}[\cite{CLSS,Hitchin:StableForms}] 
Let $V$ be an $n$-dimensional real vector space. 
For $1\leq k\leq [\frac{n}{2}]$, the general linear group ${\GL}(V)$ has an open orbit in $\Lambda^k(V^*)$
if and only if $k\leq 2$, or if $k=3$ and $n\,=\,6,\,7$ or $8$.
Moreover, the number of open orbits in $\Lambda^k(V^*)$ is finite.
\end{proposition}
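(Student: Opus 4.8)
The plan is to split the equivalence into a necessary dimension count and a matching sufficiency by explicit stable forms, and then to treat finiteness separately. For the \emph{necessity}, observe that a form $\rho\in\Lambda^k(V^*)$ lies in an open orbit precisely when $\GL(V)\cdot\rho$ has full dimension $\dim\Lambda^k(V^*)=\binom nk$; since this orbit is a homogeneous space of $\GL(V)$, an open orbit can exist only if
\[
n^2=\dim\GL(V)\ \ge\ \binom nk .
\]
First I would examine this inequality under the hypothesis $1\le k\le[\tfrac n2]$. For $k=3$ it reduces to $6n\ge(n-1)(n-2)$, i.e. $n^2-9n+2\le0$, which for integers gives $n\le8$; combined with $n\ge6$ (from $k\le[\tfrac n2]$) this isolates $n\in\{6,7,8\}$. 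For $k\ge4$ the coefficients $\binom nk$ are increasing in $k$ on $k\le[\tfrac n2]$, hence $\binom nk\ge\binom n4=\tfrac1{24}n(n-1)(n-2)(n-3)$; a short computation shows $\binom n4>n^2$ for all $n\ge8$, and $k\ge4$ forces $n\ge8$, so the necessary inequality fails. This rules out every case outside the stated list.

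For the \emph{sufficiency}, the case $k\le2$ is elementary: $\GL(V)$ is transitive on $V^*\setminus\{0\}$ (open orbit for $k=1$), and for $k=2$ the symplectic normal form shows that the $2$-forms of maximal rank form a single orbit, open because maximality of the rank is an open condition. For $k=3$ I would exhibit one stable form in each admissible dimension and check that its stabiliser has dimension $n^2-\binom n3$, which forces the orbit to be open. Concretely, in dimension $6$ one takes the real part of a complex volume form, whose stabiliser is $\SL(3,\C)$ of real dimension $16=36-20$; in dimension $7$ one takes the $\Gtwo$-form $\varphi$, stabilised by (a real form of) $\Gtwo$, of dimension $14=49-35$; and in dimension $8$ one takes the Cartan $3$-form $(X,Y,Z)\mapsto\tr(X[Y,Z])$ on $\mathfrak{sl}(3,\R)\cong V$, stabilised by the adjoint $\SL(3,\R)$, of dimension $8=64-56$. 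In each case the stabiliser realises the minimal possible dimension, so the orbit is open.

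For \emph{finiteness}, complexifying, $\GL(n,\C)$ acts on the irreducible variety $\Lambda^k(V^*)\otimes\C\cong\C^{\binom nk}$, which can contain at most one open (hence dense) orbit; every open $\GL(V)$-orbit consists of real points of this single complex orbit, and the number of resulting real forms is finite. More self-containedly, the stable forms constitute the open semialgebraic set on which the upper-semicontinuous function $\rho\mapsto\dim\Stab(\rho)$ attains its minimum; by the finiteness of connected components of a semialgebraic set this set has finitely many components, and each open orbit, being open and closed in it, is a union of components, whence there are finitely many.

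The step I expect to be the main obstacle is the sufficiency for $k=3$: the dimension count alone does not produce an open orbit, so the genuine work is to construct actual stable forms and to identify their stabilisers as the $16$-, $14$- and $8$-dimensional groups above, the $\mathfrak{sl}(3)$ Cartan form being the least transparent of the three.
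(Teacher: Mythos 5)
Your proposal is correct in outline, but note that the paper itself offers no proof of this proposition: it is recalled as a known result, with the citation to \cite{CLSS,Hitchin:StableForms} standing in for the argument. So the comparison is between your reconstruction and the literature the paper leans on, and your reconstruction is essentially the standard one. Two of your three steps are complete and self-contained as written: the necessity count (an open orbit forces $n^2=\dim\GL(V)\geq\binom{n}{k}$, which for $k=3$ gives $n^2-9n+2\leq 0$, hence $6\leq n\leq 8$, and which fails for every $4\leq k\leq[\tfrac{n}{2}]$ since then $n\geq 8$ and $\binom{n}{k}\geq\binom{n}{4}>n^2$), and the finiteness argument in its second, semialgebraic form. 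Indeed the locus $S$ of forms with open orbit is exactly the locus where the linear map $\mathfrak{gl}(V)\to\Lambda^k(V^*)$, $A\mapsto A\cdot\rho$, has full rank, a Zariski-open hence semialgebraic condition, so $S$ has finitely many connected components; moreover each open orbit is clopen in $S$ for an even simpler reason than the one you give: $S$ is by definition the disjoint union of the open orbits, so the complement of any one of them in $S$ is again open. (Your first finiteness argument, via complexification, is incomplete as stated: that the real points of the unique open complex orbit split into finitely many real orbits is precisely the point at issue, and it is your component argument that supplies it.) The genuine content, as you correctly flag, is sufficiency for $k=3$: exhibiting the three model forms and showing their stabilisers have dimension \emph{exactly} $n^2-\binom{n}{3}$ (namely $16$, $14$, $8$). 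Be careful with the logical direction here: knowing the stabiliser \emph{contains} $\SL(3,\C)$, $\Gtwo$, or $\mathrm{Ad}(\SL(3,\R))$ bounds its dimension from below, which is the wrong direction for openness; what must be computed is the upper bound, i.e.\ that the annihilator $\{A\in\mathfrak{gl}(V):A\cdot\rho=0\}$ is no larger, equivalently that $\dim(\mathfrak{gl}(V)\cdot\rho)=\binom{n}{3}$. That finite linear-algebra verification (classically due to Reichel and \'E.~Cartan for $n=6,7$, and carried out in Hitchin's paper for all three cases) is exactly the content the paper imports by citation; your proof buys self-containedness everywhere else at the cost of making those three computations explicit.
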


Note that any non-zero 1-form on $V$ is stable. In fact, if $\alpha$ is such a 1-form, then
the orbit of $\alpha$ is an open in $V^*$ since $\GL(V).\,\alpha=V^*\setminus\left\{0\right\}$.
Moreover, according with the following result due to Hictchin (\cite{Hitchin, Hitchin:StableForms}), it happens that for any stable $k$-form $\rho$ on $V$, there is 
a \emph{dual} $(n-k)$-form $\widehat{\rho}\in\Lambda^{n-k}(V^*)$, which is also stable and such that
$\widehat{\rho}\wedge\rho$ defines a volume form on $V$.
 
\begin{proposition}[\cite{CLSS,Hitchin, Hitchin:StableForms, Schulte}]\label{vol-dual}
Let $V$ be an $n$-dimensional oriented vector space, and assume that $k\in\left\{2,n-2\right\}$ and $n$ even, 
or $k\in\left\{3,n-3\right\}$ and $n=6, 7$ or $8$. Then, there exists a ${\GL}(V)$-equivariant map 
\begin{equation}\label{vol}
\varepsilon:\Lambda^k(V^*)\rightarrow \Lambda^n(V^*),
\end{equation}
homogeneous of degree $\frac{n}{k}$, such that it assigns a volume form to a stable $k$-form and it vanishes on non-stable forms.
Given a stable $k$-form $\rho$, the derivative of $\varepsilon$ in $\rho$ defines a  \emph{dual} $(n-k)$-form $\widehat{\rho}\in\Lambda^{n-k}(V^*)$ by the following property
$$d\varepsilon_\rho(\alpha)=\widehat{\rho}\wedge \alpha,$$
for all $\alpha\in\Lambda^k(V^*)$.
Moreover, the dual form $\widehat{\rho}$ is also stable, the identity component of its stabiliser  is equal to  the stabiliser of $\rho$; and the 
forms $\rho$, $\widehat{\rho}$  and the volume form $\varepsilon(\rho)$
are related by the relation
$$\widehat{\rho}\wedge\rho=\frac{n}{k}\varepsilon(\rho).$$
\end{proposition}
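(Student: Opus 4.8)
The plan is to follow Hitchin's construction, using that the listed pairs $(k,n)$ are exactly those for which $\GL(V)$ acts on $\Lambda^k(V^*)$ with finitely many orbits, the open ones being precisely the stable forms. First I would fix the degree by equivariance alone: $\GL(V)$ acts on the line $\Lambda^n(V^*)$ by $\det^{-1}$, and the homothety $t\,\id$ sends a $k$-form $\rho$ to $t^{-k}\rho$, so imposing $\varepsilon(g\cdot\rho)=\det(g)^{-1}\varepsilon(\rho)$ together with homogeneity forces the degree to be $n/k$. To construct $\varepsilon$ I would proceed orbit by orbit: fix a representative $\rho_0$ of an open orbit and a volume form $\Omega_0$, set $\varepsilon(g\cdot\rho_0):=\det(g)^{-1}\Omega_0$, and declare $\varepsilon\equiv 0$ off the finitely many open orbits. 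What makes this well defined is that in each admissible case the stabiliser $\Stab(\rho_0)$ is contained in $\SL(V)$ (it is $\mathrm{Sp}(n,\R)$ for $k=2$, $\SL(3,\C)$ for $n=6$, $\Gtwo$ for $n=7$, and so on), so $\det(g)^{-1}$ depends only on $g\cdot\rho_0$; the orientation of $V$ then fixes which generator of $\Lambda^n(V^*)$ is a positive volume form. The resulting $\varepsilon$ is smooth on the open stratum and homogeneous of degree $n/k$, and in each case can be written through the classical relative invariants: the Pfaffian for $k=2$, Hitchin's quartic $\lambda$ for $n=6$, the Riemannian volume of $g_\varphi$ for $n=7$.

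With $\varepsilon$ available, the dual form is obtained by linear duality. For a fixed stable $\rho$ the differential $d\varepsilon_\rho\colon\Lambda^k(V^*)\to\Lambda^n(V^*)$ is linear; since the wedge product is a perfect pairing $\Lambda^{n-k}(V^*)\times\Lambda^k(V^*)\to\Lambda^n(V^*)$, there is a unique $\widehat\rho\in\Lambda^{n-k}(V^*)$ with $d\varepsilon_\rho(\alpha)=\widehat\rho\wedge\alpha$ for all $\alpha$. Taking $\alpha=\rho$ and applying Euler's identity to the degree-$n/k$ map gives $\widehat\rho\wedge\rho=d\varepsilon_\rho(\rho)=\tfrac{n}{k}\,\varepsilon(\rho)$, which is the stated relation; in particular $\widehat\rho\wedge\rho$ is a volume form, so $\widehat\rho\neq 0$.

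Finally I would establish stability of $\widehat\rho$ and the stabiliser statement. Differentiating $\varepsilon(g\cdot\rho)=\det(g)^{-1}\varepsilon(\rho)$ in $\rho$, and using that both the wedge product and the action on $\Lambda^n(V^*)$ are $\GL(V)$-equivariant, yields $\widehat{g\cdot\rho}=g\cdot\widehat\rho$; hence $\rho\mapsto\widehat\rho$ is equivariant and $\Stab(\rho)\subseteq\Stab(\widehat\rho)$. Because the hypotheses on $(k,n)$ are symmetric under $k\leftrightarrow n-k$, the same construction yields a dual map on $\Lambda^{n-k}(V^*)$, and a check that $\widehat{\widehat\rho}$ is a nonzero multiple of $\rho$ provides the reverse inclusion, so $\Stab(\rho)$ and $\Stab(\widehat\rho)$ share the same identity component. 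Counting dimensions then gives $\dim\Stab(\widehat\rho)=\dim\Stab(\rho)=n^2-\binom{n}{k}$, whence the orbit of $\widehat\rho$ has dimension $\binom{n}{n-k}=\dim\Lambda^{n-k}(V^*)$ and is open; that is, $\widehat\rho$ is stable.

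The hard part is not the formal duality of the middle step but the two inputs that genuinely depend on the classification of open orbits: the inclusion $\Stab(\rho_0)\subseteq\SL(V)$, which is what allows $\varepsilon$ to exist as a single-valued equivariant map, and the involutivity $\widehat{\widehat\rho}\propto\rho$, which is what upgrades $\Stab(\rho)\subseteq\Stab(\widehat\rho)$ to equality of identity components and thereby forces $\widehat\rho$ to be stable. Both are most transparently verified case by case over the admissible pairs $(k,n)$.
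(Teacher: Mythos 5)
Your middle steps (defining $\widehat\rho$ via the perfect pairing $\Lambda^{n-k}(V^*)\times\Lambda^k(V^*)\to\Lambda^n(V^*)$, and deriving $\widehat\rho\wedge\rho=\tfrac{n}{k}\varepsilon(\rho)$ from Euler's identity for the degree-$\tfrac{n}{k}$ homogeneous map) are correct and are exactly the formal part of Hitchin's argument; note that the paper itself gives no proof of this proposition, citing Hitchin, Cort\'es--Leistner--Sch\"afer--Schulte-Hengesbach and Schulte-Hengesbach instead, so the comparison is against those sources.

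The genuine gap is in the step that makes $\varepsilon$ exist at all: the claimed inclusion $\Stab(\rho_0)\subseteq\SL(V)$ is false precisely for $k=3$, $n=6$, the case this paper relies on most. There, $\SL(3,\R)\times\SL(3,\R)$ (resp.\ $\SL(3,\C)$) is only the \emph{identity component} of the stabiliser, as the paper's Theorem~\ref{stable:6-dim} is careful to state: the full stabiliser of $\rho_0=f^{123}+f^{456}$ contains the swap $e_i\leftrightarrow e_{i+3}$, which preserves $\rho_0$ but has determinant $(-1)^3=-1$, and the full stabiliser of a complex-type form likewise contains an orientation-reversing element coming from complex conjugation. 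Your prescription $\varepsilon(g\cdot\rho_0):=\det(g)^{-1}\Omega_0$ is then inconsistent: taking $g=\id$ and $g=s$ (the swap) forces $\Omega_0=-\Omega_0$. In fact no nonzero map that is strictly $\det^{-1}$-equivariant under all of $\GL(V)$ exists on these two orbits; this is exactly where the orientation hypothesis must enter. The repair, which is what the cited sources do, is either to demand equivariance only under $\GL^{+}(V)$, or to build $\varepsilon$ from the polynomial relative invariants ($\omega^{n/2}$ for $k=2$, the quartic $\lambda(\rho)\in\left(\Lambda^6(V^*)\right)^{\otimes 2}$ of \eqref{def:lambda} for $n=6$, $\det(b_\varphi)\in\left(\Lambda^7(V^*)\right)^{\otimes 9}$ for $n=7$) and extract the root, with the orientation resolving the sign of $\sqrt{\left|\lambda\right|}$ in dimension six (the ninth root in dimension seven is odd, hence canonical). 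Note also that the determinant condition on stabilisers cannot follow from openness of the orbit alone -- for $k=1$ every nonzero form has open orbit yet its stabiliser contains elements of arbitrary determinant, which is why $k=1$ is excluded -- so the case-by-case invariants are unavoidable, and once you have them it is cleaner to define $\varepsilon$ from them directly. A smaller circularity: $\widehat{\widehat\rho}$ only makes sense after $\widehat\rho$ is known to be stable, so you should first check stability of $\widehat{\rho_0}$ on each normal form and then propagate it by the equivariance of $\rho\mapsto\widehat\rho$; in that order your dimension count for the stabilisers goes through.
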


In the following subsections, we recall the explicit description of the spaces of the open orbits in $\Lambda^k(V^*)$ when $n=6$ and when $n=7$.

\subsection{Stable forms in dimension six}\label{sec:sixdim}

Let $V$ be a 6-dimensional oriented vector space. 
\begin{theorem}[\cite{Hitchin, Hitchin:StableForms, Hitchin:Clay}] 
There is a unique ${\GL}(V)$ open orbit in $\Lambda^2(V^*)$, which can be characterized as follows 
$$\Lambda_0(V^*)=\left\{\omega\in\Lambda^2(V^*)\;|\;\omega^3\neq 0\right\}.$$
Therefore, if $\omega\in\Lambda_0(V^*)$, then its stabiliser is isomorphic to $\mathrm{Sp}(6,\R)$, its volume form $\varepsilon(\omega)$ 
$($where $\varepsilon$ is the map \eqref{vol}$)$ can 
be chosen to be equal to $\frac{1}{3!}\,\omega^3$, and its dual form $\widehat{\omega}$ $($in the sense of Proposition \ref{vol-dual}$)$
is equal to $\frac{1}{2}\omega^2$.
Moreover, there exists a suitable coframe $\{f^1,\dots,f^6\}$ of $V^*$ such that 
\begin{align*}
&\omega=f^{12}+f^{34}+f^{56},\\&\widehat{\omega}=f^{1234}+f^{1256}+f^{3456},\\&\varepsilon(\omega)=f^{123456},
\end{align*}
where $f^{12}$ stands for $f^{1}\wedge f^{2}$, and so on.
\end{theorem}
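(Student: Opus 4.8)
The plan is to prove the structure theorem for the stable $2$-form in dimension six by combining the general duality statement of Proposition \ref{vol-dual} with an explicit linear-algebra normal form argument. The claim has three logically separable parts: first, that the open orbit is exactly $\Lambda_0(V^*)=\{\omega:\omega^3\neq 0\}$; second, the identification of the stabiliser, volume form, and dual form; and third, the existence of the stated coframe. I would treat these in that order.

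For the first part, I would start from the elementary fact that for $\omega\in\Lambda^2(V^*)$ with $\dim V=6$, the rank of $\omega$ (as an alternating form) is maximal, namely $6$, precisely when $\omega^3\neq 0$; here $\omega^3\in\Lambda^6(V^*)$ is a top form, so the condition $\omega^3\neq 0$ is an open condition. The standard symplectic normal form (Darboux's linear-algebra theorem) says that any nondegenerate alternating $2$-form can be brought to $f^{12}+f^{34}+f^{56}$ in a suitable coframe, which simultaneously gives both that $\GL(V)$ acts transitively on $\Lambda_0(V^*)$—hence this set is a single orbit, and being open it is \emph{the} open orbit—and produces the required coframe for the third part. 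So the existence of the coframe is not really a separate obstacle; it falls out of the rank normalisation. The stabiliser of $\omega=f^{12}+f^{34}+f^{56}$ is by definition the group preserving this nondegenerate alternating form, which is $\mathrm{Sp}(6,\R)$.

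The remaining computations identify $\varepsilon(\omega)$ and $\widehat{\omega}$. Here I would invoke Proposition \ref{vol-dual} with $n=6$, $k=2$, so $\varepsilon$ is homogeneous of degree $n/k=3$ and $\widehat{\omega}\in\Lambda^4(V^*)$. On the normal form one computes directly that $\omega^3=3!\,f^{123456}$, which is why $\frac{1}{3!}\omega^3$ is the natural normalisation giving $\varepsilon(\omega)=f^{123456}$; since $\varepsilon$ is $\GL(V)$-equivariant and cubic, and any two such equivariant maps agree up to scale, checking the value on one representative of the open orbit fixes the scale on all of $\Lambda_0(V^*)$. For the dual form, I would differentiate $\varepsilon(\omega)=\frac{1}{3!}\omega^3$: the defining property $d\varepsilon_\omega(\alpha)=\widehat{\omega}\wedge\alpha$ together with $d(\omega^3)_\omega(\alpha)=3\,\omega^2\wedge\alpha$ yields $\widehat{\omega}\wedge\alpha=\frac{1}{2}\omega^2\wedge\alpha$ for all $\alpha\in\Lambda^2(V^*)$, hence $\widehat{\omega}=\frac{1}{2}\omega^2$; evaluating on the normal form confirms $\widehat{\omega}=f^{1234}+f^{1256}+f^{3456}$ and the relation $\widehat{\omega}\wedge\omega=\frac{n}{k}\varepsilon(\omega)=3\,\varepsilon(\omega)$.

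The main obstacle, such as it is, is conceptual rather than computational: one must be careful that the derivative identity defining $\widehat{\omega}$ is pairing-free, i.e.\ that $\widehat{\omega}\wedge\alpha=\frac{1}{2}\omega^2\wedge\alpha$ holding for all $\alpha$ genuinely forces $\widehat{\omega}=\frac{1}{2}\omega^2$ (this uses nondegeneracy of the wedge pairing $\Lambda^4\times\Lambda^2\to\Lambda^6$). Everything else is routine verification on the Darboux normal form, and the $\GL(V)$-equivariance guarantees these pointwise computations on one representative propagate across the entire open orbit.
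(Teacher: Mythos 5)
Your proof is correct, but note that the paper itself offers no proof of this statement: it is quoted as a known result and attributed to Hitchin's papers, so there is no internal argument to compare against. Your reconstruction is the standard one and fills that gap in a self-contained way: linear Darboux normal form gives transitivity of $\GL(V)$ on $\{\omega^3\neq 0\}$ (hence a single orbit, openness being clear since $\omega\mapsto\omega^3$ is polynomial), the stabiliser of the normal form is $\mathrm{Sp}(6,\R)$ by definition, and the identities $\varepsilon(\omega)=\frac{1}{3!}\omega^3$, $\widehat{\omega}=\frac{1}{2}\omega^2$ follow by differentiating the cubic and using nondegeneracy of the wedge pairing $\Lambda^4(V^*)\times\Lambda^2(V^*)\to\Lambda^6(V^*)$, exactly as required by the defining property $d\varepsilon_\omega(\alpha)=\widehat{\omega}\wedge\alpha$ in Proposition \ref{vol-dual}. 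Two small points you should make explicit: first, uniqueness of the open orbit does not follow merely from $\Lambda_0(V^*)$ being open and a single orbit; you also need that the complement $\{\omega^3=0\}$ is the zero locus of a nontrivial polynomial and hence has empty interior, so no orbit contained in it can be open (this is immediate, but it is the actual reason for uniqueness). Second, your remark that equivariant homogeneous maps agree up to scale is fine but unnecessary for the statement as written, since the theorem only asserts that $\varepsilon(\omega)$ \emph{can be chosen} to equal $\frac{1}{3!}\omega^3$; it suffices to check that this cubic map is $\GL(V)$-equivariant, assigns a volume form to stable forms, and vanishes on non-stable ones, which you do.
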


In order to describe the open orbits in $\Lambda^3(V^*)$ we proceed as follows. For any $\rho\in\Lambda^3(V^*)$, we consider the map 
$k_\rho:V \mapsto \Lambda^5(V^*)$ defined by
$$k_{\rho}(x)\,=\,\iota_x\rho\wedge\rho,$$
where $\iota_x$ denotes the contraction by the vector $x\in V$.
Clearly $\Lambda^5(V^*)\cong V\otimes\Lambda^6(V^*)$. Indeed, the map $\mu: \Lambda^5(V^*) \mapsto {V\otimes\Lambda^6(V^*)}$ given by
$\mu(\xi)= x\otimes\alpha\in V \otimes\Lambda^6(V^*)$, with $\iota_{x}(\alpha)=\xi$, is an isomorphism. Thus, 
we have the linear map 
\begin{gather}\label{def:capital-K}
K_\rho\,=\,\mu\circ k_{\rho}:V \mapsto  V\otimes\Lambda^6(V^*).
\end{gather}
Then, one can also define the quadratic function 
$$
\lambda: \Lambda^3(V^*) \mapsto \left({\Lambda^6(V^*)}\right)^{\otimes 2}
$$
on the space $\Lambda^3(V^*)$ by
\begin{equation}\label{def:lambda}
6\,\lambda(\rho)=\mathrm{trace}({K_{\rho}}^2)\in\left({\Lambda^6V^*}\right)^{\otimes 2}.
\end{equation}

An element $\lambda(\rho)\in\left({\Lambda^6(V^*)}\right)^{\otimes 2}$, where $\rho\in\Lambda^3(V^*)$, is said to be {\em positive}, 
 and we write $\lambda(\rho)>0$, 
 if $\lambda(\rho)=\nu\otimes\nu$ with $\nu\in\Lambda^6(V^*)$. In this case, the form $\rho$ is called {\em positive}.
An element  $\lambda(\rho)\in\left({\Lambda^6(V^*)}\right)^{\otimes 2}$ is said to be {\em negative}, 
  and we write $\lambda(\rho)<0$, if $-\lambda(\rho)$ is positive, that is $-\lambda(\rho)=\nu\otimes\nu$.
Then, the form $\rho$ is called {\em negative}. 

Now, let us suppose that $\rho\in\Lambda^3(V^*)$ is such that $\lambda(\rho)\not=0$. Then, we can 
consider the linear map
$J_{\rho}: V\to V$ such that, for $x\in V$, $J_{\rho}(x)$ is defined by
\begin{equation}\label{paracomplex-1}
J_{\rho}(x)\,=\,\frac{1}{\sqrt{\lambda(\rho)}}K_{\rho}(x)\in V,
\end{equation} 
if $\lambda(\rho)>0$, and 
\begin{equation}\label{complex-1}
J_{\rho}(x)\,=\,\frac{1}{\sqrt{-\lambda(\rho)}}K_{\rho}(x)\in V,
\end{equation}
if $\lambda(\rho)<0$.

\begin{theorem}[\cite{Hitchin, Hitchin:StableForms, Hitchin:Clay}] \label{stable:6-dim}
 The unique ${\GL}(V)$ open orbits in $\Lambda^3(V^*)$ are the two following sets 
$$\Lambda_+(V^*)=\left\{\rho\in\Lambda^3 V^*\,|\,\lambda(\rho)>0\right\}\qquad \text{and}\qquad\Lambda_{-}(V^*)=\left\{\rho\in\Lambda^3 V^*\,|\,\lambda(\rho)<0\right\},$$ 
where $\lambda(\rho)$ is given by \eqref{def:lambda}. If 
$\rho\in\Lambda_+(V^*)$, 
then the identity component of its stabiliser is isomorphic to $\mathrm{SL}(3,\R)\times\mathrm{SL}(3,\R)$, and there is a coframe $\left\{f^1,\dots,f^6\right\}$ 
of $V^*$ such that $\rho$ has the following expression
$$\rho=f^{123}+f^{456}.$$
If $\rho\in\Lambda_-(V^*)$, the identity component of its stabiliser is isomorphic to $\mathrm{SL}(3,\C)$ and there is a coframe $\left\{f^1,\dots,f^6\right\}$ of $V^*$ 
such that 
$$\rho=-f^{246}+f^{136}+f^{145}+f^{235}.$$
Moreover, $\rho\in\Lambda_+(V^*)$ if and only if the map $J_{\rho}$, defined by $\eqref{paracomplex-1}$, is a paracomplex structure on $V$; 
and $\rho\in\Lambda_-(V^*)$ if and only if the map $J_{\rho}$, defined by $\eqref{complex-1}$, is a complex structure on $V$.
In both cases  the dual form $\widehat{\rho}$ of $\rho$ is given by $\widehat{\rho}=-J_{\rho}^*\rho$.
\end{theorem}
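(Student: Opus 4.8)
The plan is to base everything on the single algebraic identity
\[
K_\rho^2 \,=\, \lambda(\rho)\,\id_V,
\]
valid for \emph{every} $\rho\in\Lambda^3(V^*)$, where $K_\rho$ is the endomorphism \eqref{def:capital-K} (so that $K_\rho^2\in\End(V)\otimes(\Lambda^6 V^*)^{\otimes 2}$) and $\lambda(\rho)$ is the quadratic invariant \eqref{def:lambda}. Both sides are polynomial of the same degree in $\rho$ and transform identically under the action $\rho\mapsto g^*\rho$ of $\GL(V)$; hence their difference is an equivariant polynomial map, and verifying that it vanishes on one representative of the (complexified) open orbit forces it to vanish on the whole orbit, which is Zariski dense in $\Lambda^3(V^*)\otimes\C$. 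Polynomiality then propagates the identity to all of $\Lambda^3(V^*)\otimes\C$, and restriction to real forms gives the claim. Taking the trace recovers exactly the normalisation $6\lambda(\rho)=\tr(K_\rho^2)$ of \eqref{def:lambda}, confirming that the scalar is precisely $\lambda(\rho)$.

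Granting this identity, the orbit picture follows. Since $\lambda$ is a nonzero polynomial, the set $\{\lambda\neq 0\}$ is open and dense, and it splits into the two $\GL(V)$-invariant open subsets $\Lambda_+(V^*)=\{\lambda>0\}$ and $\Lambda_-(V^*)=\{\lambda<0\}$. When $\lambda(\rho)>0$, the map $J_\rho=\lambda(\rho)^{-1/2}K_\rho$ of \eqref{paracomplex-1} satisfies $J_\rho^2=\id$, i.e.\ it is a paracomplex structure; when $\lambda(\rho)<0$, the map $J_\rho=(-\lambda(\rho))^{-1/2}K_\rho$ of \eqref{complex-1} satisfies $J_\rho^2=-\id$, a complex structure. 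Conversely $\lambda(\rho)=0$ gives $K_\rho^2=0$, so no such structure can be manufactured; this yields both \emph{if and only if} statements, the sign of $\lambda$ deciding which of the two formulas produces a genuine (para)complex $J_\rho$.

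Next I would pin down the normal forms, which simultaneously shows each $\Lambda_\pm(V^*)$ is a single orbit. In the positive case $J_\rho$ splits $V=V_+\oplus V_-$ into its $(\pm1)$-eigenspaces; since the trace of $J_\rho$ vanishes (equivalently $\tr K_\rho=0$) both summands are $3$-dimensional, one checks that $\rho\in\Lambda^3 V_+^*\oplus\Lambda^3 V_-^*$ and restricts to a nonzero, hence volume, form on each factor, so adapted coframes give $\rho=f^{123}+f^{456}$. In the negative case $J_\rho$ makes $V$ a complex $3$-dimensional space and $\Omega=\rho+i\,\widehat{\rho}$ becomes a nowhere-zero complex $(3,0)$-form, i.e.\ a complex volume form; writing $\rho=\Re\Omega$ in a complex coframe yields $\rho=-f^{246}+f^{136}+f^{145}+f^{235}$. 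Since any two forms with the same sign of $\lambda$ are thus $\GL(V)$-conjugate, $\Lambda_+(V^*)$ and $\Lambda_-(V^*)$ are exactly the two open orbits, consistent with the count of open orbits recalled at the beginning of this section.

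It remains to read off the stabilisers and the dual form. For $\rho=f^{123}+f^{456}$ the identity component of the stabiliser must commute with the canonically defined $J_\rho$, hence preserve the eigenspace splitting and each of the two volume forms, so it is $\SL(3,\R)\times\SL(3,\R)$ (a further component interchanges $V_+$ and $V_-$). For the complex normal form the stabiliser preserves $J_\rho$ and $\Omega$, i.e.\ it is the group of complex-linear, volume-preserving maps $\SL(3,\C)$. Finally the relation $\widehat{\rho}=-J_\rho^*\rho$ comes from Hitchin's duality: by Proposition \ref{vol-dual}, $\widehat{\rho}$ is determined by $d\varepsilon_\rho(\alpha)=\widehat{\rho}\wedge\alpha$ with $\varepsilon(\rho)$ a fixed multiple of $\sqrt{|\lambda(\rho)|}$; differentiating and using $K_\rho=\pm\sqrt{|\lambda(\rho)|}\,J_\rho$ expresses $\widehat{\rho}$ through the pullback $J_\rho^*\rho$, the precise sign being fixed on the normal forms and then propagated by equivariance. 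I expect the genuine work to be concentrated in the identity $K_\rho^2=\lambda(\rho)\,\id_V$ and in verifying that $\rho$ really decomposes along the eigenspaces of $J_\rho$ (respectively is a complex volume form), the remaining steps being bookkeeping with the two normal forms.
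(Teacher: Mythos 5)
There is no internal argument to compare against here: the paper states Theorem \ref{stable:6-dim} purely as a background result, with an attribution to Hitchin and to Schulte\dash Hengesbach, and gives no proof of it. Measured against the cited literature, your sketch is essentially Hitchin's argument: the identity $K_\rho^2=\lambda(\rho)\,\id_V$, established by $\GL(V)$-equivariance together with verification on one representative whose complexified orbit is Zariski open and dense, is exactly the engine of the classical proof, and your derivation of the eigenspace splitting, the two normal forms, the stabilisers $\SL(3,\R)\times\SL(3,\R)$ and $\SL(3,\C)$, and the relation $\widehat{\rho}=-J_\rho^*\rho$ via Proposition \ref{vol-dual} follows the standard route.

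Two steps need more than you give them. The assertion $\tr K_\rho=0$ appears only parenthetically, but it has a one-line proof: in the notation of \eqref{def:capital-K} the $i$-th diagonal entry of $K_\rho$ is read off from $f^i\wedge\iota_{e_i}\rho\wedge\rho$, and $\sum_i f^i\wedge\iota_{e_i}\rho\wedge\rho=3\,\rho\wedge\rho=0$ because $\rho$ has odd degree. More seriously, the step ``one checks that $\rho\in\Lambda^3V_+^*\oplus\Lambda^3V_-^*$'' is the real content of the positive case, and it cannot be justified by propagating from the normal form $f^{123}+f^{456}$ by equivariance: at that stage you do not yet know that $\Lambda_+(V^*)$ is a single orbit, so such an argument would be circular. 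The repair stays inside your framework: the vanishing of the mixed components of $\rho$ relative to $V=V_+\oplus V_-$ is equivalent, after clearing the factors of $\sqrt{\lambda(\rho)}$, to the two identities $\iota_{K_\rho x}\iota_{y}\rho=\iota_{x}\iota_{K_\rho y}\rho$ and $\iota_{K_\rho x}\iota_{K_\rho y}\rho=\lambda(\rho)\,\iota_x\iota_y\rho$, which \emph{are} polynomial in $\rho$; hence they may be checked on the model and extended by Zariski density exactly as you do for $K_\rho^2=\lambda(\rho)\,\id_V$, and then divided by $\lambda(\rho)$ on $\Lambda_+(V^*)$. With this repair, and its analogue producing the complex volume form $\rho+i\,\widehat{\rho}$ in the negative case, your proposal is a correct reconstruction of the quoted theorem.
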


\begin{remark} \label{rem:stable3-forms}
Note that a 3-form $\rho$ on a 6-dimensional oriented vector space $V$ is stable if and only  if $\lambda(\rho)\not=0$.
Moreover, if $\rho\in\Lambda^3(V^*)$ is a 3-form on $V$,  then $k_\rho=k_{-\rho}$, so 
$K_\rho=K_{-\rho}$ and $\lambda(\rho)=\lambda(-\rho)$. Thus, if $\rho\in\Lambda_{+}(V^*)$, then $-\rho\in\Lambda_{+}(V^*)$;
and if $\rho\in\Lambda_{-}(V^*)$, then $-\rho\in\Lambda_{-}(V^*)$.
\end{remark}
\bigskip

\subsection{$\mathrm{SU}(3)$-structures as pairs of stable forms in $\Lambda_{0}(V^*)\times\Lambda_{-}(V^*)$}\label{sec:SU(3)-structures}

We recall the notion of $\mathrm{SU}(3)$-structure on a vector space $V$ of (real) dimension 6. 
An \emph{$\mathrm{SU}(3)$-structure} on $V$ is 
a triple $(g, J, \psi)$ such that $(g, J)$ is an almost Hermitian structure on $V$, and 
$\psi=\psi_++i\,\psi_-$ is a complex $(3,0)$-form, which satisfies
\begin{equation}\label{eq:pair normalized}
\psi_+\wedge \psi_-=\frac{2}{3}\omega^3,
\end{equation}
where $\omega$ is the K\"ahler form of $(g, J)$, and $\psi_+$ and $\psi_-$ are the real part and the imaginary part of $\psi$, respectively.
It is clear that $\omega\wedge\psi_+=\omega\wedge\psi_-=0$ and $\psi_-=J\psi_+$.

\begin{theorem}[\cite{Hitchin}, \cite{Schulte}] \label{th:SU(3)-pairs}
Let $(\omega,\psi_-)\in\Lambda_0(V^*)\times\Lambda_-(V^*)$ such that
$$
\omega\wedge\psi_-=0.
$$
Let $J_{\psi_-}$ be the complex structure on $V$ defined 
by $\eqref{complex-1}$, and let
$h: {V \times V} \to \mathbb{R}$ be the map given by
$$
h(x,y)=\omega(x,J_{\psi_-}y),
$$
for $x, y \in V$. Then, if $h$ is positive definite, 
the stabiliser of the pair $(\omega,\psi_-)$ is a subgroup of $\mathrm{SO}(V,h)$ isomorphic to $\mathrm{SU}(3)$, that is
the pair $(\omega,\psi_-)$ defines an $\mathrm{SU}(3)$-structure for which $J_{\psi_-}$ is the complex structure, $\psi=-J_{\psi_-}^*\psi_- + i \psi_-$ is 
the complex volume form and $h$ the underlying Hermitian metric. Furthermore, any other $\mathrm{SU}(3)$-structure is obtained in this way.\par
If $(\omega,\psi_-)$ is {\em normalized}, that is the condition \eqref{eq:pair normalized} is satisfied, 
then the dual form  $\widehat{\omega}$ and the real part $\psi_+$ of $\psi$ are given by
\begin{align*}
&\widehat{\omega}=\star_h \omega,\\
&\psi_+=\star_h\psi_-,
\end{align*}
where $\star_{h}$ is the Hodge star operator of $h$; and there exists a suitable $($h$-orthonormal)$ 
coframe $\left\{f^1,\dots,f^6\right\}$ of $V^*$ such that 
\begin{equation}\label{ortoh}
\begin{array}{l}
\omega=f^{12}+f^{34}+f^{56},\\
\psi_-=-f^{246}+f^{136}+f^{145}+f^{235}.
\end{array}
\end{equation}
\end{theorem}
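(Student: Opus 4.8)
The plan is to reduce everything to the explicit normal form of $\psi_-$ furnished by Theorem~\ref{stable:6-dim} and then read off the Hermitian data. First I would fix a coframe in which $\psi_-=-f^{246}+f^{136}+f^{145}+f^{235}$; by Theorem~\ref{stable:6-dim} the map $J_{\psi_-}$ of \eqref{complex-1} is then a genuine complex structure, and a direct computation shows that it pairs the dual basis into three complex lines, so that $V\cong\C^3$ with $(1,0)$-forms $z^j=f^{2j-1}+i\,f^{2j}$. The central linear-algebra step is the equivalence
$$\omega\wedge\psi_-=0\quad\Longleftrightarrow\quad \omega(J_{\psi_-}\,\cdot\,,J_{\psi_-}\,\cdot\,)=\omega,$$
that is, $\omega$ is of type $(1,1)$ for $J_{\psi_-}$. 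To prove it I would split $\Lambda^2(V^*)$ into its $J_{\psi_-}$-invariant $(1,1)$-part and its $(2,0)+(0,2)$-part, and note that, writing $\psi_-=\im\psi$ with $\psi=z^1z^2z^3$, wedging with $\psi_-$ annihilates every $(1,1)$-form (the products land in bidegrees $(4,1)$ and $(1,4)$, which vanish on $\C^3$) while it is injective on the $(2,0)+(0,2)$-part, since for a $(2,0)$-form $\gamma$ the two terms of $(\gamma+\bar\gamma)\wedge\psi_-$ sit in the distinct bidegrees $(2,3)$ and $(3,2)$ and neither vanishes unless $\gamma=0$. Thus $\ker(\,\cdot\wedge\psi_-)$ is exactly the $(1,1)$-part.

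Granting the $(1,1)$-condition, antisymmetry of $\omega$ together with $J_{\psi_-}^2=-\id$ yields at once that $h(x,y)=\omega(x,J_{\psi_-}y)$ is symmetric, that $h(J_{\psi_-}x,J_{\psi_-}y)=h(x,y)$, and that $\omega(x,y)=h(J_{\psi_-}x,y)$; hence, when $h$ is positive definite, the triple $(h,J_{\psi_-},\omega)$ is an almost Hermitian structure whose fundamental form is $\omega$. To identify the stabiliser I would intersect the two known ones. By equivariance of the construction of $J_{\psi_-}$, an orientation-preserving element fixing $\psi_-$ commutes with $J_{\psi_-}$ and so lies in $\SL(3,\C)=\{A:AJ_{\psi_-}=J_{\psi_-}A,\ \det_{\C}A=1\}$; if it also fixes $\omega$ then it preserves $h=\omega(\,\cdot\,,J_{\psi_-}\,\cdot\,)$, lands in the unitary group $\mathrm U(3)$ of $(h,J_{\psi_-})$, and with $\det_{\C}A=1$ is forced into $\SU(3)$. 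The remaining, orientation-reversing elements of the stabiliser of $\psi_-$ anticommute with $J_{\psi_-}$ and would satisfy $A^{*}h=-h$, incompatible with $h$ positive definite; hence $\Stab(\omega,\psi_-)$ is exactly the connected group $\SU(3)\subset\SO(V,h)$. The converse assertion is then immediate: from an $\SU(3)$-structure $(g,J,\psi)$ one takes $\omega$ to be the K\"ahler form and $\psi_-=\im\psi$, and checks $\omega\in\Lambda_0(V^*)$, $\psi_-\in\Lambda_-(V^*)$, $J_{\psi_-}=J$ and $h=g$, while $\omega\wedge\psi_-=0$ holds because $\omega$ is of type $(1,1)$.

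For the normalized case I would pick a unitary coframe for $(h,J_{\psi_-})$; then automatically $\omega=f^{12}+f^{34}+f^{56}$, and $\psi=\psi_++i\psi_-$, being a $(3,0)$-form on $\C^3$, equals $c\,e^{i\theta}\,z^1z^2z^3$ for some $c>0$. The normalization \eqref{eq:pair normalized} is precisely what fixes $c$ so that the frame is genuinely $h$-orthonormal, and a rotation of the frame absorbs the phase $\theta$, giving the standard expressions \eqref{ortoh}. In this frame the remaining identities $\star_h\omega=\tfrac12\omega^2=\widehat{\omega}$ and $\star_h\psi_-=-J_{\psi_-}^{*}\psi_-=\psi_+$ reduce to checking a handful of monomials.

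The main obstacle is the pair of algebraic facts underpinning the argument: the characterization of the compatibility $\omega\wedge\psi_-=0$ as the type-$(1,1)$ condition, and the clean identification of the intersection of the two stabilisers as exactly $\SU(3)$ rather than $\mathrm U(3)$. For the latter the essential point is that one must preserve the full form $\psi_-$, not merely the complex structure $J_{\psi_-}$ it induces: it is this that supplies the determinant-one condition, while the positive-definiteness of $h$ is what excludes the orientation-reversing components. Everything else is bookkeeping in the adapted coframe.
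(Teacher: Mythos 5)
This theorem is not proved in the paper at all: it is recalled as background and attributed to the cited works of Hitchin and Schulte-Hengesbach, so there is no internal proof to compare yours against. Judged on its own merits, your proposal is correct and is essentially the standard argument of those references. The two pivots you isolate are the right ones, and your treatment of each is sound: the bidegree computation showing that the kernel of wedging with $\psi_-$ on real $2$-forms is exactly the $(1,1)$-part (the $(1,1)$ piece lands in the vanishing bidegrees $(4,1)$ and $(1,4)$, while $\gamma+\bar\gamma$ maps to a sum of nonzero terms in the distinct bidegrees $(2,3)$ and $(3,2)$), and the stabiliser identification, where you correctly use $\GL(V)$-equivariance of $\rho\mapsto J_\rho$ to split $\Stab(\psi_-)$ into a $J$-commuting and a $J$-anticommuting part, extract $\det_{\C}A=1$ from $A^*\psi=\det_{\C}(A)\,\psi$ together with invariance of $\Im\psi$, and use positive-definiteness of $h$ to exclude the anticommuting component via $A^*h=-h$. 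The normalization step is also right: in an $h$-unitary coframe $\psi$ must equal $c\,e^{i\theta}\,z^1\wedge z^2\wedge z^3$, condition \eqref{eq:pair normalized} forces $c=1$ (one computes $\psi_+\wedge\psi_-=4c^2\,f^{123456}$ against $\tfrac23\omega^3=4f^{123456}$), and a diagonal unitary rotation absorbs the phase.

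Two points you assert rather than verify should be written out in a full proof, though neither threatens the argument: first, that $J_{\psi_-}$ computed from \eqref{complex-1} in the model coframe of Theorem \ref{stable:6-dim} is the standard complex structure with $(1,0)$-forms $f^{2j-1}+i f^{2j}$ — this is also where the orientation convention hidden in the choice of square root of $-\lambda(\psi_-)$ gets pinned down, and it is what guarantees $\psi=-J_{\psi_-}^*\psi_-+i\psi_-$ is of type $(3,0)$ rather than $(0,3)$; second, for the converse (``any $\mathrm{SU}(3)$-structure arises this way''), that Hitchin's construction applied to $\Im\psi$ of a given $\mathrm{SU}(3)$-structure returns the original $J$. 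Both are routine checks in the adapted frame, consistent with the normal forms \eqref{ortoh}.
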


\subsection{Stable forms in dimension seven} \label{subject:stable-7-dim}
Let $V$ be a 7-dimensional oriented vector space. For each $3$-form $\varphi\in\Lambda^3(V^*)$ on $V$, we can define the symmetric quadratic form
 $b_\varphi: V\times V \to \Lambda^7(V^*)$ by
$$
6\,b_\varphi(x,y)=\iota_x\varphi\wedge\iota_y\varphi\wedge\varphi,
$$
where $x, y\in V$. If $\varphi$ is stable, then (see \cite{Hitchin, Schulte}) the volume form $\varepsilon(\varphi)$ defined by the map \eqref{vol} is given by 
$$
\varepsilon(\varphi)=\sqrt[9]{\mathrm{det}(b_\varphi)}.
$$
Then, we can consider the symmetric map $g_{\varphi}: V\times V \to {\mathbb{R}}$ given by
\begin{equation}\label{eq:metric7-dim}
g_{\varphi}(x,y)\;\varepsilon(\varphi)=b_{\varphi}(x,y),
\end{equation}
where $x, y\in V$.

\begin{definition} \label{def:G2positive}
Let $V$ be a 7-dimensional oriented vector space. A 3-form $\varphi$ on $V$ is called {\em positive} if $g_\varphi$ is positive definite; and
$\varphi$ is said to be {\em negative} if $g_\varphi$ is indefinite.\par\medskip
\end{definition}

The following result states that the positive and negative forms on $V$ are the unique stable 3-forms on $V$.
\begin{theorem}[\cite{Hitchin}, \cite{Schulte}]\label{stable-7dim}
There are exactly two ${\GL}(V)$ open orbits in $\Lambda^3(V^*)$ which are defined by
$$
\Pi_+(V^*)=\left\{\varphi\in\Lambda^3(V^*) \;|\;g_\varphi\; \text{is positive definite}\right\},
$$
and
$$
\Pi_-(V^*)=\left\{\varphi\in\Lambda^3(V^*) \;|\;g_\varphi  \;\text{is indefinite}\right\}.
$$
If $\varphi\in\Pi_+(V^*)$, then  the stabiliser of $\varphi$ is a subgroup of $\mathrm{O}(V,g_\varphi)$ isomorphic to $\Gtwo$, 
the dual form $\phi=\widehat{\varphi}$ of $\varphi$ is given by
$\phi=\star_{\varphi}\varphi$, where $\star_{\varphi}$ is the Hodge star operator of the metric $g_\varphi$. Moreover, in a suitable $g_\varphi$-orthonormal coframe $\left\{f^1,\dots,f^7\right\}$ of $V^*$, the forms
$\varphi$ and $\phi=\widehat{\varphi}$ have the following expressions
\begin{equation}\label{ortog}
\begin{array}{l}
\varphi=f^{127}+f^{347}+f^{567}+f^{135}-f^{146}-f^{236}-f^{245},\\
\phi=f^{1234}+f^{1256}+f^{1367}+f^{1457}+f^{2357}-f^{2467}+f^{3456},
\end{array}
\end{equation}
If $\varphi\in\Pi_-(V^*)$, then the metric $g_\varphi$ has signature $(3,4)$, and the stabiliser of $\varphi$ is the non-compact group $\Gtwo^*$. In this case,
there exists a $g_\varphi$-orthonormal coframe $\left\{f^1,\dots,f^7\right\}$ of $V^*$ such that the 
forms $\varphi$ and $\phi=\widehat{\varphi}$ are given by 
$$
\begin{array}{l}
\varphi=- f^{127}+f^{347}+f^{567}+f^{135}-f^{146}-f^{236}-f^{245},\\
{ \phi= \star_{\varphi} \varphi =  f^{3456}-f^{1234}-f^{1256}-f^{2467}+f^{1367}+f^{1457}+f^{2357}.}
\end{array}
$$
\end{theorem}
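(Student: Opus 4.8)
The plan is to produce the two orbits by hand, identify their stabilisers and metric signatures, deduce openness by a dimension count, and then show that no further open orbits can occur. The models I would start from are the two explicit forms $\varphi_+$ and $\varphi_-$ displayed in the statement; these are the associative $3$-forms $\varphi(x,y,z)=\langle x\cdot y,z\rangle$ attached to the octonions $\mathbb{O}$ and to the split octonions $\widetilde{\mathbb{O}}$, evaluated on their imaginary parts. Computing $6\,b_{\varphi_\pm}(x,y)=\iota_x\varphi_\pm\wedge\iota_y\varphi_\pm\wedge\varphi_\pm$ directly on the coframe $\{f^1,\dots,f^7\}$ yields a diagonal matrix: for $\varphi_+$ one finds $g_{\varphi_+}=\sum_i (f^i)^2$, positive definite, while for $\varphi_-$ the single sign change in the $f^{127}$ term produces signature $(3,4)$. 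In particular $b_{\varphi_\pm}$ is nondegenerate, so $\varepsilon(\varphi_\pm)\neq 0$, the forms are stable, and the chosen coframes are $g_{\varphi_\pm}$-orthonormal.

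Next I would exploit that $\varphi\mapsto g_\varphi$ is $\GL(V)$-equivariant. From $\iota_x(A^*\varphi)=A^*(\iota_{Ax}\varphi)$ and $A^*|_{\Lambda^7}=\det A$ one gets $b_{A^*\varphi}(x,y)=(\det A)\,b_\varphi(Ax,Ay)$, and combining this with $\varepsilon(A^*\varphi)=(\det A)\,\varepsilon(\varphi)$ gives $g_{A^*\varphi}(x,y)=g_\varphi(Ax,Ay)$. Hence any $A\in\Stab(\varphi)$ preserves $g_\varphi$, so $\Stab(\varphi)\subset\mathrm{O}(V,g_\varphi)$; and inside the orthogonal group the subgroup fixing $\varphi_\pm$ is exactly the automorphism group of the corresponding (split) octonion algebra, namely $\Gtwo$ in the definite case and $\Gtwo^*$ in the $(3,4)$ case, each of dimension $14$. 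Therefore the orbit of $\varphi_\pm$ has dimension $\dim\GL(7,\R)-14=49-14=35=\dim\Lambda^3(V^*)$, and an orbit of full dimension is open. Since signature is a congruence invariant, the displayed equivariance shows it is constant along orbits, so $\Pi_+(V^*)$ and $\Pi_-(V^*)$ are disjoint, each a union of orbits containing one of the two open orbits just constructed.

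The remaining and, to my mind, most delicate point is that these are \emph{all} the open orbits; I would argue it by complexification. Over $\C$ the space $\Lambda^3(\C^7)^*$ has a single open $\GL(7,\C)$-orbit, whose stabiliser is the complex simple group $\Gtwo^{\C}$ of dimension $14$, and every real open orbit complexifies to this one. The real orbits lying over it are then classified by the real forms of the pair $(\GL(7,\C),\Gtwo^{\C})$: as $\R^7$ is the only real form of $\C^7$ (equivalently $H^1(\mathrm{Gal}(\C/\R),\GL_7)$ is trivial), they correspond to the real forms of $\Gtwo^{\C}$, of which there are exactly two, the compact $\Gtwo$ and the split $\Gtwo^*$. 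Hence there are at most two real open orbits; the two models realise them, so $\Pi_+(V^*)$ and $\Pi_-(V^*)$ are single orbits and $g_\varphi$ takes only the signatures $(7,0)$ and $(3,4)$ on stable forms. One could instead avoid Galois cohomology and show directly that within a fixed signature the stable forms form a single orbit, using connectedness together with the transitivity of $\GL(7,\R)$ on frames adapted to $\varphi_\pm$.

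Finally, for the identity $\widehat{\varphi}=\star_\varphi\varphi$: by Proposition \ref{vol-dual} the dual form $\widehat\varphi$ is a $\Stab(\varphi)$-invariant $4$-form, and so is $\star_\varphi\varphi$, since $\Stab(\varphi)=\Gtwo\subset\SO(V,g_\varphi)$ commutes with the Hodge star of $g_\varphi$. As the space of $\Gtwo$-invariant $4$-forms on $V$ is one-dimensional, the two are proportional, and the proportionality constant is pinned down by the normalisation $\widehat\varphi\wedge\varphi=\tfrac{7}{3}\varepsilon(\varphi)$ of Proposition \ref{vol-dual} compared against $\star_\varphi\varphi\wedge\varphi$; evaluating on the orthonormal coframe then produces the displayed expression for $\phi$. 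The same argument with $\Gtwo^*$ replacing $\Gtwo$ settles the negative case.
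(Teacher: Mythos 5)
This theorem is nowhere proved in the paper: it is quoted from Hitchin and Schulte-Hengesbach (the classification itself going back to Engel and Reichel, the paper's reference [Rei]), so your proposal can only be measured against the standard proofs in that literature. Judged on those terms it is essentially correct, and it follows a genuinely more structural route than the classical one. Where Reichel and, in effect, Schulte-Hengesbach reduce an arbitrary $3$-form to normal form by explicit linear algebra, you build the two model orbits from the octonions and split octonions, prove $\Stab(\varphi)\subset\mathrm{O}(V,g_\varphi)$ via the equivariance identity $g_{A^*\varphi}(x,y)=g_\varphi(Ax,Ay)$ (correct, including the determinant bookkeeping through $\varepsilon$), get openness from the count $49-14=35=\dim\Lambda^3(V^*)$, and then cap the number of open orbits by complexification: the stabilizer Lie algebra complexifies, so every real open orbit lies over the unique complex open orbit, and the real orbits over it are counted by $\ker\bigl(H^1(\R,\Gtwo)\to H^1(\R,\GL_7)\bigr)=H^1(\R,\Gtwo)$, which has two elements. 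This buys a conceptual proof of ``exactly two'' with no normal-form computation; the cost is two imported facts of real weight, the Engel--Reichel uniqueness of the complex open orbit and the Galois-cohomology count, both standard but neither trivial. The final step is also the right mechanism: $\Stab(\varphi)$-invariance plus one-dimensionality of the space of $\Gtwo$-invariant (resp.\ $\Gtwo^*$-invariant) $4$-forms forces $\widehat\varphi\propto\star_\varphi\varphi$.

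Two caveats, neither fatal. First, to conclude that $\Pi_\pm$ are single orbits (not merely unions of orbits each containing an open one) you need that $g_\varphi$ is defined and nondegenerate only for stable $\varphi$; this is exactly the statement in Proposition \ref{vol-dual} that $\varepsilon$ vanishes on non-stable forms (equivalently, that the complex open orbit is the complement of $\{\det b_\varphi=0\}$), and your write-up should invoke it explicitly. Second, your pinning of the constant in $\widehat\varphi=\star_\varphi\varphi$ hides a normalization wrinkle that is present in the paper itself: with $6\,b_\varphi(x,y)=\iota_x\varphi\wedge\iota_y\varphi\wedge\varphi$ and $\varepsilon=(\det b_\varphi)^{1/9}$ one computes $\varepsilon(\varphi_+)=\mathrm{vol}$, so $\widehat\varphi\wedge\varphi=\tfrac73\,\varepsilon(\varphi)=\tfrac73\,\mathrm{vol}$, whereas $\star_\varphi\varphi\wedge\varphi=|\varphi|^2\,\mathrm{vol}=7\,\mathrm{vol}$; as written your comparison would output $\widehat\varphi=\tfrac13\star_\varphi\varphi$, and the clean identity $\widehat\varphi=\star_\varphi\varphi$ holds only after rescaling $\varepsilon$ by $3$, a choice the ``there exists'' formulation of Proposition \ref{vol-dual} permits and which you should record.
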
\par\medskip

\begin{remark}
The correspondence $\left\{\varphi\mapsto\star_{\varphi}\varphi\right\}$ from the set of   stable $3-$forms to  the one is stable $4-$forms  is $2:1$, preserves both positivity and negativity, and verifies $g_{\varphi}=g_{\star_{\varphi}\varphi}$. A section of this map is completely determined by an orientation on $V$ (see \cite{Bryant}).
\end{remark}

\section{Coclosed $\Gtwo$-structures and obstructions}\label{sec:sec3}
In this section we show
obstructions to the existence of a coclosed  $\Gtwo$ form on a Lie algebra with non-trivial center, 
but not necessarily nilpotent. First, we recall some definitions and results about $\Gtwo$-structures.

A $7$-dimensional smooth manifold $M$ is said to admit a $\Gtwo$-{\em structure} if there is a reduction of the structure group of 
its frame bundle from ${\GL}(7,\mathbb{R})$ to the  exceptional Lie group $\Gtwo$, which can actually be viewed naturally as a subgroup of $\mathrm{SO}(7)$. 
Thus, a $\Gtwo$-structure determines a Riemannian metric and an orientation on $M$. In fact, the presence of a $\Gtwo$-structure is equivalent to the existence of a $3$-form $\varphi$ (the $\Gtwo$ form) on $M$, which is 
\emph{positive} (in the sense of Definition \ref{def:G2positive}) on the tangent space $T_{p} M$ of $M$ at every point $p\in M$.

If $\varphi$ is a $\Gtwo$-form on $M$, then by \eqref{eq:metric7-dim}
$\varphi$ induces both an orientation and a Riemannian metric $g_{\varphi}$ on $M$
given by
\begin{equation}\label{metric}
6\, g_{\varphi} (X,Y)\, vol= \iota_{X}\varphi \wedge \iota_{Y}\varphi \wedge \varphi,
\end{equation}
for any vector fields $X, Y$ on $M$, where $vol$ is the volume form on $M$, and  $\iota_{X}$ denotes the contraction by $X$.
Let $\star_{\varphi}$ be the Hodge star operator determined by $g_{\varphi}$ and the orientation induced by $\varphi$. We will always write 
$\phi$ to mean the dual 4-form of a $\Gtwo$ form, that is
$$
\phi=\star_{\varphi}\varphi.
$$
We say that a manifold $M$ has a {\em coclosed $\Gtwo$-structure}
if there is a $\Gtwo$-structure on $M$ such that the $\Gtwo$ form $\varphi$ is coclosed, that is 
$d\phi=0$.

Now, let $G$ be a $7$-dimensional simply connected 
Lie group with Lie algebra $\lie{g}$. Then, a $\Gtwo$-structure 
on $G$ is \emph{left invariant} if and only if the corresponding
$3$-form is left invariant. According to Theorem \ref{stable-7dim}, a left invariant $\Gtwo$-structure on 
$G$ is defined by a positive 3-form $\varphi\in \Pi_{+}({\lie{g}}^*)$,  
which can be written as 
\begin{equation}\label{eqn:3-forma G2}
 \varphi=e^{127}+e^{347}+e^{567}+e^{135}-e^{146} -e^{236}-e^{245},
\end{equation}
with respect to some orthonormal coframe $\{e^1,\dotsc, e^7\}$ of ${\mathfrak{g}}^*$.
So the dual form $\phi=\star_{\varphi}\varphi$
has the following expression
\begin{equation}\label{eqn:4-forma G2}
 \phi=e^{1234}+e^{1256}+e^{1367}+e^{1457}+e^{2357}-e^{2467}+e^{3456},
\end{equation}
where $e^{127}$ stands for $e^1\wedge e^2\wedge e^7$, and so on.

A $\Gtwo$-structure on $\lie{g}$ is said to be {\em coclosed} (or {\em cocalibrated}) if $\varphi$ is coclosed, that is 
$$
d\phi=0,
$$
where $d$ denotes the Chevalley-Eilenberg differential on ${\lie{g}}^*$. 
By \cite{Malcev} we know that if $\lie{g}$ is nilpotent with rational structure 
constants, then the associated simply connected nilpotent  Lie group $G$ admits a uniform discrete 
subgroup $\Gamma$. Therefore, a $\Gtwo$-structure on $\lie{g}$ determines 
a $\Gtwo$-structure on the compact manifold $\Gamma\backslash G$, which is called a 
compact nilmanifold; and if $\lie{g}$ has a coclosed
$\Gtwo$-structure, the $\Gtwo$-structure on $\Gamma\backslash G$ is also coclosed.

In order to show obstructions to the existence of a coclosed $\Gtwo$-structure on a Lie algebra
$\lie{g}$, let us consider first the case  that $\lie{g}$ is a direct sum of two ideals $\lie{h}$ and $\R$,
$$
\lie{g}=\lie{h}\oplus\R,
$$ 
where $\lie{h}$ is a 6-dimensional Lie algebra. If $\phi$ is a 4-form defining a $\Gtwo$-structure on $\lie{g}$, 
and the  decomposition $\lie{g}=\lie{h}\oplus\R$ is {\em orthogonal with respect to the underlying metric on $\lie{g}$}, then
$$
\phi=\frac{1}{2}\omega^2+\psi_{-}\wedge dt,
$$
where the pair $(\omega, \psi_{-})$ defines an $\mathrm{SU}(3)$-structure on $\lie{h}$, and $t$ is a coordinate on $\mathbb{R}$. 
Now the condition that $\phi$ is closed is equivalent to both $\omega^2$ and $\psi_{-}$ are closed. 
This means that the $\mathrm{SU}(3)$-structure is {\em half-flat}. There are exactly 
24 nilpotent Lie algebras of dimension six that admit a half-flat structure ~\cite{Conti:HalfFlat}. Hence, if we focus our attention 
on decomposable nilpotent Lie algebras, there are at least 24 nilpotent Lie algebras, of dimension seven, 
with a coclosed $\Gtwo$-structure. In Theorem \ref{decomposable-cocalibrated}, we show that exactly
those are the decomposable nilpotent Lie algebras admitting coclosed $\Gtwo$-structures.

Let $\lie{g}$ be a $7$-dimensional Lie algebra with non-trivial center. Then, if $X\in\lie{g}$ belongs to  the center of $\lie{g}$, the quotient $\lie{h}={\lie{g}}/{\Span{X}}$ has a unique Lie algebra structure that makes the projection map
$\pi: \lie{g}\to {\lie{h}}$
a Lie algebra morphism. Thus, 
we have the following short exact sequence of Lie algebras
\begin{equation}\label{fibration}
\begin{CD}
0@>>>\mathbb{R} X@>>>\lie{g}@>\pi>>\lie{h}@>>>0.
\end{CD}
\end{equation}
Moreover, if we assume that $\lie{g}$ is nilpotent, every epimorphism $\lie{g}\to\lie{h}$, with $\lie{h}$ of dimension six, is of this form.

We need also to recall the following result due to Schulte \cite [Proposition 4.5]{Schulte}.
If $\varphi$  is a $\Gtwo$-structure on a $7$-dimensional Lie algebra and we choose 
a vector $X \in \frak g$, of length one with respect to the metric $g_{\varphi}$ determined by $\varphi$, 
then on the orthogonal 
complement  of  the span of  $X$  one has an $\mathrm{SU}(3)$-structure $(\omega, \psi_{-})$ given by 
the 2-form $\omega = \iota_X \varphi$ and the 3-form $\psi_{-} =- \iota_X \phi$,
where $\phi$ is the dual 4-form of $\varphi$, that is $\phi=\star_{\varphi}\varphi$.

\begin{proposition}\label{prop1}
Let $\lie{g}$ be a seven dimensional Lie algebra, with non-trivial center and a $3$-form $\varphi$ defining a 
coclosed $\Gtwo$-structure on $\lie{g}$.
If $\pi: {\lie{g}} \to {\lie{h}}$ is an epimorphism from $\lie{g}$ to a six dimensional 
Lie algebra $\lie{h}$, with kernel contained in the center of  $\lie{g}$,
then $\varphi$ determines an $\mathrm{SU}(3)$-structure $(\omega, \psi_-)$ on $\lie{h}$ such that the 3-form $\psi_-$ is closed.  
\end{proposition}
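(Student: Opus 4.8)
The plan is to reduce everything to the recalled splitting result of Schulte \cite[Proposition 4.5]{Schulte} together with a single Cartan-formula computation that exploits both hypotheses at once. First I would observe that, since $\dim\lie{g}=7$ and $\dim\lie{h}=6$, the kernel of $\pi$ is one-dimensional; by hypothesis it is spanned by a central vector $X$, which I rescale so that $g_\varphi(X,X)=1$. Applying the recalled result of Schulte to this unit vector, the orthogonal complement of $\Span{X}$ in $\lie{g}$ inherits an $\mathrm{SU}(3)$-structure $(\omega,\psi_-)$ with $\omega=\iota_X\varphi$ and $\psi_-=-\iota_X\phi$. Because $g_\varphi$ is positive definite, $X$ does not lie in its own orthogonal complement, so $\pi$ restricts to a linear isomorphism from $\Span{X}^\perp$ onto $\lie{h}$; transporting $(\omega,\psi_-)$ along this isomorphism produces the candidate $\mathrm{SU}(3)$-structure on $\lie{h}$.

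To make this transport canonical and to prepare the closedness argument, I would check that $\omega$ and $\psi_-$ are \emph{basic} for the projection $\pi$. They are horizontal, since $\iota_X\omega=\iota_X\iota_X\varphi=0$ and $\iota_X\psi_-=-\iota_X\iota_X\phi=0$; and they are $\Lie_X$-invariant because $X$ is central, so that $(\Lie_X\alpha)(Y_1,\dots,Y_k)=-\sum_i\alpha(Y_1,\dots,[X,Y_i],\dots,Y_k)=0$ for every form $\alpha$ on $\lie{g}$. Hence $\omega$ and $\psi_-$ descend to forms $\bar\omega,\bar\psi_-$ on $\lie{h}$ with $\pi^*\bar\omega=\omega$ and $\pi^*\bar\psi_-=\psi_-$, and these agree with the forms obtained by transport along $\pi|_{\Span{X}^\perp}$; thus $(\bar\omega,\bar\psi_-)$ is an $\mathrm{SU}(3)$-structure on $\lie{h}$. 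The crux is then the computation $d\psi_-=-d\iota_X\phi=-\Lie_X\phi+\iota_X d\phi$, where the first term vanishes by centrality of $X$ and the second by coclosedness ($d\phi=0$). Therefore $d\psi_-=0$ on $\lie{g}$, and since $\pi^*d\bar\psi_-=d\pi^*\bar\psi_-=d\psi_-=0$ with $\pi^*$ injective, I conclude $d\bar\psi_-=0$.

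I expect the only genuinely substantive point to be this last identity: it is exactly the place where the two hypotheses combine, centrality eliminating $\Lie_X\phi$ and coclosedness eliminating $\iota_X d\phi$. Everything else is formal, but two compatibility checks deserve care: that the isomorphism $\pi|_{\Span{X}^\perp}$ really carries Schulte's $\mathrm{SU}(3)$-data to $(\bar\omega,\bar\psi_-)$ (guaranteed because a horizontal form is determined by its restriction to any complement of $X$, and $\Span{X}^\perp$ is such a complement, while $\Lie_X$-invariance is what makes it descend), and that no positivity or normalization condition is lost under a linear isomorphism (these are $\GL$-natural algebraic conditions in the sense of Theorem \ref{th:SU(3)-pairs}, hence preserved).
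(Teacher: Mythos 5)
Your proposal is correct and follows essentially the same route as the paper: both invoke Schulte's Proposition 4.5 to obtain the $\mathrm{SU}(3)$-structure on the orthogonal complement of the central unit vector $X$, transport it to $\lie{h}$, and then obtain $d\psi_-=0$ by combining centrality of $X$ with $d\phi=0$. Your Cartan-formula identity $d\iota_X\phi=\Lie_X\phi-\iota_X d\phi$ is simply a tidier packaging of the paper's step of differentiating the decomposition $\phi=\pi^*\sigma+\pi^*\psi_-\wedge\eta$ and contracting the result with $X$, and your basic-form justification of the descent makes explicit what the paper treats by ``fixing an isomorphism'' between $\Span{X}^\perp$ and $\lie{h}$.
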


\begin{proof}
Denote by $g_{\varphi}$ the underlying metric on $\lie{g}$ defined by the $\Gtwo$-structure $\varphi$, and denote by
$\phi$ the dual 4-form of $\varphi$, that is $\phi=\star_{\varphi}\varphi$. 
Let $X$ be a unit vector in the center of $\lie{g}$, and let $\eta=\iota_{X}(g_{\varphi})$ be
the dual form of $X$ with respect to $g_{\varphi}$. By the aforementioned result of \cite[Proposition 4.5]{Schulte}
we know that $\varphi$ induces an $\mathrm{SU}(3)$-structure $(\widetilde{\omega}, \widetilde{\psi_{-}})$ on the orthogonal 
complement  $V$ of  the span of  $X$, that is on $\ker(\eta)$, such that 
$$
\phi=\widetilde{\sigma} + \widetilde{\psi_{-}}\wedge\eta,
$$
where $\widetilde{\sigma}\in\Lambda^4(V^*)$ is the dual 4-form of $\widetilde{\omega}$ with respect to the metric defined by
$(\widetilde{\omega}, \widetilde{\psi_{-}})$ on the space $\ker(\eta)$.

Now, consider the Lie algebra $\lie{h}={\lie{g}}/{\Span{X}}$ and the projection map 
$$\pi: {\lie{g}} \to {\lie{h}}.$$
Clearly  $\lie{h}$ and $V$ are isomorphic as vector spaces. Then, fixed an isomorphism between these spaces and doing the pullback of
the $\mathrm{SU}(3)$-structure $(\widetilde{\omega}, \widetilde{\psi_{-}})$ on $V$, we have an 
$\mathrm{SU}(3)$-structure $(\omega, \psi_{-})$ on $\lie{h}$ such that
\begin{equation}\label{relation: forms}
\phi=\pi^*\sigma + \pi^*\psi_{-}\wedge\eta,
\end{equation}
where $\widetilde{\sigma}\in\Lambda^4(\lie{h}^*)$ is the dual 4-form of $\omega$ with respect to the metric defined by
$(\omega,\psi_{-})$ on $\lie{h}$. Thus,
$$\psi_{-}=\pi_{*}(-\iota_{X}\phi).$$
Moreover, we see that 
$$
d\psi_{-}=0, \qquad d(\pi^*\sigma)\,=\,\pi^*\psi_{-}\wedge d\eta.
$$
In fact, since $d$ commutes with the pullback, from \eqref{relation: forms} we have
$$
0=d\phi=\pi^*(d\sigma)+ \pi^*(d\psi_{-})\wedge \eta - \pi^*\psi_{-}\wedge d\eta.
$$
Taking the contraction by $X$, and using that $X$ is in the center of $\lie{g}$, that is ${\mathrm{ad}}(X)=0$,  
we have $d\psi_{-}=0$ and $d(\pi^*\sigma)\,=\,\pi^*\psi_{-}\wedge d\eta$.
\end{proof}

As a consequence of the previous Proposition, we have the following obstruction to existence of coclosed $\Gtwo$-structures on Lie algebras
with non-trivial center.
\begin{corollary}\label{obs1}
Let $\lie{g}$ be a seven dimensional oriented Lie algebra with non-trivial center. If there is an element $X$ in the center of $\lie{g}$, such that
\begin{equation}\label{condition:obst}
\pi_{*}(\iota_{X}\kappa) \not\in\Lambda_{-}(\lie{h}^*), 
\end{equation}
for every closed 4-form $\kappa\in\Lambda^{4}(\lie{g}^*)$, where $\pi: \lie{g} \to \lie{h}={\lie{g}}/{\Span{X}}$ is the projection map,
 then $\lie{g}$ does not admit any coclosed $\Gtwo$-structure.
\end{corollary}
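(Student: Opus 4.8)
The plan is to argue by contraposition, converting a hypothetical coclosed $\Gtwo$-structure into a violation of the standing hypothesis \eqref{condition:obst}. So suppose, for contradiction, that $\lie{g}$ carries a coclosed $\Gtwo$-structure with $3$-form $\varphi$, and let $\phi=\star_{\varphi}\varphi$ be its dual $4$-form. Coclosedness means precisely $d\phi=0$, so $\phi$ is itself a closed $4$-form on $\lie{g}$; this is the single closed $4$-form against which I will test the hypothesis.

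Next I would feed $\varphi$ and the given central element $X$ into Proposition \ref{prop1}. Since $X$ lies in the center of $\lie{g}$, the projection $\pi:\lie{g}\to\lie{h}=\lie{g}/\Span{X}$ has kernel contained in the center, so the Proposition applies and produces an $\mathrm{SU}(3)$-structure $(\omega,\psi_{-})$ on $\lie{h}$ with $\psi_{-}$ closed. The key identity extracted from that proof is $\psi_{-}=\pi_{*}(-\iota_{X}\phi)$, equivalently $\pi_{*}(\iota_{X}\phi)=-\psi_{-}$.

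Now I would invoke the algebraic classification of stable $3$-forms. By Theorem \ref{th:SU(3)-pairs} the imaginary part of an $\mathrm{SU}(3)$-structure always lies in the open orbit $\Lambda_{-}(\lie{h}^*)$, so $\psi_{-}\in\Lambda_{-}(\lie{h}^*)$; and by Remark \ref{rem:stable3-forms} the class $\Lambda_{-}$ is stable under $\rho\mapsto-\rho$, whence $-\psi_{-}\in\Lambda_{-}(\lie{h}^*)$ as well. Combining this with the identity above, the closed $4$-form $\kappa=\phi$ satisfies $\pi_{*}(\iota_{X}\kappa)=-\psi_{-}\in\Lambda_{-}(\lie{h}^*)$. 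This directly contradicts hypothesis \eqref{condition:obst}, which demands $\pi_{*}(\iota_{X}\kappa)\notin\Lambda_{-}(\lie{h}^*)$ for \emph{every} closed $4$-form $\kappa$. Hence no coclosed $\Gtwo$-structure can exist on $\lie{g}$.

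I expect essentially no difficulty here, since the entire geometric content is already packaged in Proposition \ref{prop1}; the corollary is just its contrapositive combined with the observation that a coclosed structure supplies the specific test form $\kappa=\phi$. The only points requiring care are bookkeeping ones: verifying that $\phi$ is genuinely closed, which is exactly the coclosed condition $d\phi=0$, and checking that the sign flip $\psi_{-}\mapsto-\psi_{-}$ does not leave the orbit $\Lambda_{-}$, which is precisely Remark \ref{rem:stable3-forms}. The one conceptual step, that $\psi_{-}$ is forced into $\Lambda_{-}(\lie{h}^*)$, is built into the very definition of an $\mathrm{SU}(3)$-structure as a pair of stable forms in $\Lambda_{0}(\lie{h}^*)\times\Lambda_{-}(\lie{h}^*)$.
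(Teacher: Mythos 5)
Your proposal is correct and follows essentially the same route as the paper's own proof: apply Proposition \ref{prop1} to the central element $X$ to get $\psi_{-}=\pi_{*}(-\iota_{X}\phi)$, then use Theorem \ref{th:SU(3)-pairs} together with Remark \ref{rem:stable3-forms} to conclude $\pi_{*}(\iota_{X}\phi)\in\Lambda_{-}(\lie{h}^*)$, contradicting \eqref{condition:obst} with $\kappa=\phi$. The only difference is expository: you spell out explicitly that coclosedness makes $\phi$ the test form and that the sign flip stays in $\Lambda_{-}$, points the paper leaves implicit.
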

\begin{proof}
Let $X$ be a non-zero vector in the center of $\lie{g}$ such that the condition \eqref{condition:obst} is satisfied, for any closed 
4-form on $\lie{g}$. Suppose that there is a 4-form $\phi$ on $\lie{g}$ defining a coclosed $\Gtwo$-structure. Then, by Proposition \ref{prop1},
the 4-form $\phi$ determines an $\mathrm{SU}(3)$-structure $(\omega, \psi_-)$ on $\lie{h}={\lie{g}}/{\Span{X}}$, where $\psi_{-}=\pi_{*}(-\iota_{X}\phi)$
by \eqref{relation: forms}.  
Now, Remark \ref{rem:stable3-forms} and Theorem \ref{th:SU(3)-pairs} imply that $\pi_{*}(\iota_{X}\phi)\in\Lambda_{-}(\lie{h}^*)$, which contradicts  the condition \eqref{condition:obst}.
So, $\lie{g}$ does not admit coclosed $\Gtwo$-structures.
\end{proof}

Another obstruction to the existence of a coclosed $\Gtwo$-structure on a Lie algebra is given by the following lemma.
\begin{lemma}\label{obs3}
Let $\lie{g}$ be a $7$-dimensional Lie algebra. If there are non-zero vectors $X$ and $Y$ in $\lie{g}$ such that $(\iota_{X}\iota_{Y}\kappa)^2=0$, for every closed $4$-form 
$\kappa$ on $\lie{g}$, then $\lie{g}$ does not admit coclosed $\Gtwo$-structures.
\end{lemma}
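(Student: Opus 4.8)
The plan is to argue by contradiction, exploiting the fact that a coclosed $\Gtwo$-structure furnishes a \emph{closed} $4$-form to which the hypothesis directly applies, namely the Hodge dual $\phi=\star_{\varphi}\varphi$. So I would assume that $\varphi$ is a coclosed $\Gtwo$-structure on $\lie{g}$, so that $\phi$ is closed, and then feed $\kappa=\phi$ into the hypothesis to get $(\iota_{X}\iota_{Y}\phi)^2=0$ for the given vectors. The whole point is then to show this is impossible: for a genuine $\Gtwo$-form $\phi$ the $2$-form $\iota_{X}\iota_{Y}\phi$ always has rank $4$, so that its square (a $4$-form) is non-zero, as soon as $X$ and $Y$ are linearly independent. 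Observe first that if $X$ and $Y$ were proportional then $\iota_{X}\iota_{Y}\kappa\equiv 0$ for every form and the hypothesis would be vacuous; hence I may and will assume $X,Y$ linearly independent.

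The key reduction is that $\iota_{X}\iota_{Y}\phi=\iota_{X\wedge Y}\phi$ depends on the pair only through the bivector $X\wedge Y$, and linearly in it. Thus, replacing $X,Y$ by a $g_{\varphi}$-orthonormal basis $\{X',Y'\}$ of the plane $\Span{X,Y}$ (via Gram--Schmidt with respect to the metric $g_{\varphi}$ induced by $\varphi$) only rescales $\iota_{X}\iota_{Y}\phi$ by a non-zero factor, so it does not affect whether its square vanishes. I would then invoke the transitivity of $\Gtwo$ on orthonormal $2$-frames of $(\lie{g},g_{\varphi})$: indeed $\Gtwo$ acts transitively on the unit sphere $S^6$ with stabiliser $\SU(3)$, and $\SU(3)$ acts transitively on the unit sphere $S^5$ of the orthogonal complement $\cong\C^3$, so that composing gives transitivity on ordered orthonormal pairs. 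Hence there is $A\in\Gtwo$ carrying the first two vectors $e_1,e_2$ of the $g_{\varphi}$-orthonormal frame dual to the coframe of \eqref{eqn:4-forma G2} onto $X',Y'$. Since $A$ stabilises $\phi$, i.e. $A^*\phi=\phi$, the naturality of contraction yields $A^*(\iota_{X'}\iota_{Y'}\phi)=\iota_{e_1}\iota_{e_2}(A^*\phi)=\iota_{e_1}\iota_{e_2}\phi$, whence $(\iota_{X'}\iota_{Y'}\phi)^2=(A^{-1})^*\big((\iota_{e_1}\iota_{e_2}\phi)^2\big)$. As $(A^{-1})^*$ is a linear isomorphism of $\Lambda^4(\lie{g}^*)$, the square $(\iota_{X'}\iota_{Y'}\phi)^2$ vanishes if and only if $(\iota_{e_1}\iota_{e_2}\phi)^2$ does.

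It then remains to compute the model, which is routine: from the normal form \eqref{eqn:4-forma G2} one finds
\begin{equation*}
\iota_{e_1}\iota_{e_2}\phi=-\,(e^{34}+e^{56}),\qquad (\iota_{e_1}\iota_{e_2}\phi)^2=2\,e^{3456}\neq 0.
\end{equation*}
Therefore $(\iota_{X}\iota_{Y}\phi)^2\neq 0$, contradicting the equality $(\iota_{X}\iota_{Y}\phi)^2=0$ forced by the hypothesis on the closed form $\phi$. This shows that no coclosed $\Gtwo$-structure can exist on $\lie{g}$. The main obstacle is the reduction step, where one must be certain that testing a single model pair suffices; this is exactly what the transitivity of $\Gtwo$ on orthonormal $2$-frames, together with the scale-invariance of the vanishing condition, guarantees. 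Once that is in place, the concluding computation is a one-line contradiction.
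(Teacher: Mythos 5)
Your proposal is correct and takes essentially the same approach as the paper: the paper's entire proof of Lemma \ref{obs3} is ``It follows from \eqref{eqn:4-forma G2}'', i.e.\ precisely your argument of feeding the closed form $\kappa=\phi=\star_{\varphi}\varphi$ into the hypothesis and observing that, in the normal form, no contraction $\iota_{X}\iota_{Y}\phi$ by linearly independent vectors can have vanishing square. Your reduction via transitivity of $\Gtwo$ on orthonormal $2$-frames, together with the model computation $(\iota_{e_1}\iota_{e_2}\phi)^2=2\,e^{3456}\neq 0$, correctly supplies the verification that the paper leaves implicit.
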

\begin{proof}
It follows from \eqref{eqn:4-forma G2}.
\end{proof}

The following result, whilst straightforward, 
turns out effective to show that some Lie algebras do not admit coclosed $\Gtwo$-structures
(see Proposition \ref{no-cocalibrated2}).
\begin{lemma}\label{lem2}
Let $(h,J)$ be an 
almost Hermitian structure on a six dimensional oriented vector space $V$, with orthogonal complex structure $J$, Hermitian metric $h$ and fundamental two-form $\omega(.,.)=h(J.,.)$.
Then, for any $J$-invariant $4-$dimensional subspace $W$ of $V$,  we have that the restriction to $W$ of the 4-form $(\star\omega)$
is non-zero, that is
$(\star\omega)|_W\neq 0$,
where $\star$ denotes the Hodge $\star$-operator of the metric $h$.
\end{lemma}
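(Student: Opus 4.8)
The plan is to reduce the claim to the elementary fact that a non-degenerate $2$-form on a $4$-dimensional vector space has non-vanishing square. The starting point is the description of dual forms in dimension six recalled in Subsection~\ref{sec:sixdim}: for the almost Hermitian structure $(h,J)$ with fundamental form $\omega$ one has $\widehat{\omega}=\tfrac{1}{2}\omega^{2}$ and, for the underlying metric, $\widehat{\omega}=\star_{h}\omega$, whence $\star\omega=\tfrac{1}{2}\omega^{2}$. Thus it suffices to prove that $(\omega^{2})|_{W}\neq 0$.

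Next I would use that restriction of exterior forms to a subspace is the pullback $\iota^{*}$ along the inclusion $\iota\colon W\hookrightarrow V$, and that $\iota^{*}$ is a homomorphism of exterior algebras. This gives
\[
(\star\omega)|_{W}=\tfrac{1}{2}\,\iota^{*}(\omega\wedge\omega)=\tfrac{1}{2}\,(\iota^{*}\omega)\wedge(\iota^{*}\omega)=\tfrac{1}{2}\,(\omega|_{W})^{2},
\]
so the whole problem is transferred onto the restricted form $\omega|_{W}$. Here the hypothesis that $W$ is $J$-invariant is essential: since $Jx\in W$ for $x\in W$ and $h$ is $J$-Hermitian, the triple $(h|_{W},J|_{W},\omega|_{W})$ is an almost Hermitian structure on the $4$-dimensional space $W$, with $\omega|_{W}(x,y)=h(J|_{W}x,y)$ for $x,y\in W$. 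In particular $\omega|_{W}$ is non-degenerate, so on the $4$-dimensional $W$ its square is a non-zero multiple of the volume form of $h|_{W}$; combining this with the previous display yields $(\star\omega)|_{W}=\tfrac{1}{2}(\omega|_{W})^{2}\neq 0$.

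There is no serious obstacle here: the argument is essentially bookkeeping, and the only two points requiring care are the identity $(\omega^{2})|_{W}=(\omega|_{W})^{2}$ (that restriction commutes with the wedge product) and the use of $J$-invariance, without which a $4$-plane could carry a degenerate restriction of $\omega$. If one prefers to avoid the identity $\star\omega=\tfrac{1}{2}\omega^{2}$, the same conclusion follows by a direct coordinate computation: choose an $h$-orthonormal, $J$-adapted basis $e_{1},\dots,e_{6}$ of $V$ with $Je_{1}=e_{2}$, $Je_{3}=e_{4}$, $Je_{5}=e_{6}$ and $W=\Span{e_{1},e_{2},e_{3},e_{4}}$; in the dual coframe $\omega=f^{12}+f^{34}+f^{56}$ and $\star\omega=f^{1234}+f^{1256}+f^{3456}$, and restricting to $W$ annihilates the two summands containing $f^{5}$ or $f^{6}$, leaving $f^{1234}|_{W}\neq 0$.
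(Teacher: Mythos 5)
Your proof is correct, but your main argument is genuinely different from the paper's. The paper argues directly with the Hodge star: since $W$ is $J$-invariant it picks an $h$-orthonormal basis $\{x,Jx,y,Jy\}$ of $W$, extends by a unit vector $z\perp W$ to the orthonormal basis $\{x,Jx,y,Jy,z,Jz\}$ of $V$, and then evaluates
$(\star\omega)(x\wedge Jx\wedge y\wedge Jy)=\omega(z\wedge Jz)=\norm{z}^2=1$,
i.e.\ it uses the duality property of $\star$ (evaluation on a coordinate $4$-plane equals evaluation of $\omega$ on the orthogonal $2$-plane). Your route instead goes through the algebraic identity $\star\omega=\tfrac12\omega^2$ and the observation that $J$-invariance makes $\omega|_W$ a non-degenerate $2$-form on the $4$-dimensional space $W$, so $(\omega|_W)^2\neq 0$; this is arguably more conceptual, as it isolates exactly where $J$-invariance enters (non-degeneracy of the restriction) and reduces everything to a standard symplectic fact, at the price of invoking the identity $\star\omega=\tfrac12\omega^2$. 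One small caveat there: that identity holds for the orientation induced by $J$ (the one with $\omega^3>0$); since the lemma only assumes $V$ is oriented, in general $\star\omega=\pm\tfrac12\omega^2$, but the sign is irrelevant for non-vanishing, so your conclusion stands. Note also that your final coordinate argument (adapted coframe with $W=\Span{e_1,e_2,e_3,e_4}$, so that $(\star\omega)|_W=f^{1234}|_W\neq0$) is essentially a rephrasing of the paper's proof, and it implicitly uses that $W^\perp$ is also $J$-invariant, which follows from orthogonality of $J$.
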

\begin{proof}
Since $W$ is $J$-invariant, there exist non-zero vectors $x,y\in W$ such that $\left\{x,Jx,y,Jy\right\}$ is an orthonormal basis of the space $W$ (with respect to $h$).
Let $z\in V$ be a unit vector orthogonal to the space $W$. Then $\left\{x,Jx,y,Jy,z,Jz\right\}$ is a (real) $h$-orthonormal basis of $V$. Therefore,
\begin{eqnarray*}
(\star\omega)(x\wedge Jx\wedge y\wedge Jy)=\omega(z\wedge Jz)=||z||^2=1,
\end{eqnarray*}
which proves that $(\star\omega)|_W\neq 0$. 
\end{proof}

\section{Decomposable case}\label{sec:sec4}
In this section we classify the seven dimensional decomposable nilpotent Lie algebras which admit coclosed $\Gtwo$-structures.
Recall that a Lie algebra is called {\em decomposable} if it is the direct sum of two ideals.

For convenience, from now on we will use the following notation.
Suppose that $\lie{g}$ is a $7$-dimensional Lie algebra whose dual space $\lie{g}^*$ is spanned by $\{ e^1,\ldots ,e^7\}$ satisfying
$$
de^i=0, \quad 1\leq i\leq 4, \qquad de^5=e^{23}, \qquad de^6=e^{34}, \qquad de^7=e^{36}. 
$$
Then we will write
$$
\lie{g}\,=\,(0,0,0,0,23,34,36) 
$$
with the same meaning. Moreover, we will denote by $\{e_1,\ldots ,e_7\}$ the basis of $\lie{g}$ dual to  $\{e^1,\ldots ,e^7\}$.

\begin{proposition}\label{no-cocalibrated1}
If $\lie{g}$ is one of the following seven dimensional nilpotent Lie algebras
\begin{align*}
&\lie{g}_1=(0,0,0,0,12,15,0), \quad\quad\,\,\,\,\,\,\,\,  \lie{g}_2=(0,0,0,0,23,34,36), \quad\quad\,\,\,\,    \lie{g}_3=(0,0,0,12,13,14,0), \\
&\lie{g}_4=(0,0,0,12,14,24,0), \quad\quad\,\,\,\,\, \lie{g}_5=(0,0,12,13,14,23+15,0), \,\lie{g}_6=(0,0,12,13,14,15,0), \\
&\lie{g}_7=(0,0,12,13,14,34-25,0), \,\lie{g}_8=(0,0,12,13,14+23,34-25,0),
\end{align*}
then $\lie{g}$ carries no coclosed $\Gtwo$-structures.
\end{proposition}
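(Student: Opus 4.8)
The plan is to apply the two obstruction results just established—Corollary \ref{obs1} and Lemma \ref{obs3}—to each of the eight Lie algebras $\lie{g}_1,\dots,\lie{g}_8$ in turn. For each algebra I would first compute the center and identify the structure of the space of closed $4$-forms. Since all eight are nilpotent, the Chevalley--Eilenberg differential is determined by the stated structure equations, so computing $d$ on the basis of $\Lambda^4(\lie{g}^*)$ is a finite linear-algebra task; the kernel of $d|_{\Lambda^4}$ is then an explicit subspace that I can describe by a basis of wedge monomials $e^{ijkl}$.

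The heart of the argument is then a case split according to which obstruction applies. For an algebra possessing a central generator $X=e_k$ one invokes Corollary \ref{obs1}: I would contract each closed $4$-form $\kappa$ by $X$, push forward along $\pi:\lie{g}\to\lie{h}=\lie{g}/\Span{e_k}$, and verify that the resulting $3$-form on $\lie{h}$ can never lie in the open orbit $\Lambda_-(\lie{h}^*)$. By Remark \ref{rem:stable3-forms} and Theorem \ref{stable:6-dim}, membership in $\Lambda_-(\lie{h}^*)$ is equivalent to $\lambda(\pi_*(\iota_X\kappa))<0$, and in particular requires $\pi_*(\iota_X\kappa)$ to be stable, i.e. $\lambda\neq 0$. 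So the concrete goal is to show $\lambda\bigl(\pi_*(\iota_{e_k}\kappa)\bigr)=0$ identically for all closed $\kappa$—typically because the pushed-forward $3$-forms all share a common annihilating direction or fail to span enough of $\Lambda^3(\lie{h}^*)$ to be nondegenerate. Alternatively, whenever two vectors $X=e_i$, $Y=e_j$ can be found for which $\iota_X\iota_Y\kappa$ is always a decomposable (hence square-zero) $2$-form on every closed $\kappa$, I would instead apply Lemma \ref{obs3}: since the model form \eqref{eqn:4-forma G2} has $(\iota_{e_a}\iota_{e_b}\phi)^2\neq 0$ for the relevant coordinate pairs, such an algebra cannot carry a coclosed $\Gtwo$-structure.

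Executing this requires, for each $\lie{g}_i$, a good choice of the central element $X$ (or the pair $X,Y$): the obstruction only bites for a well-chosen direction, so part of the work is recognizing which generator to contract against. I expect the two families to split naturally—algebras whose closed $4$-forms are too ``thin'' in a chosen central direction to produce a stable $3$-form on the quotient fall to Corollary \ref{obs1}, while those where a $\iota_X\iota_Y$ contraction is forced to be decomposable fall to Lemma \ref{obs3}. In practice I would organize the proof as eight short paragraphs, in each exhibiting the center, a basis of $\ker(d|_{\Lambda^4})$, the distinguished vector(s), and the one-line verification that \eqref{condition:obst} holds or that $(\iota_X\iota_Y\kappa)^2=0$.

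The main obstacle is not any single deep step but the bookkeeping: correctly computing $\ker(d|_{\Lambda^4(\lie{g}_i^*)})$ for each algebra and then checking that the relevant quadratic invariant $\lambda$ (or the square of a $2$-form) vanishes on that entire subspace rather than on just one form. The genuinely delicate point is ensuring the obstruction holds for \emph{every} closed $4$-form, not merely a generic one; so for each algebra I would argue structurally—exhibiting a fixed $1$-form or coordinate $e^j$ that divides $\pi_*(\iota_X\kappa)$ for all closed $\kappa$, which forces $\lambda=0$ by Theorem \ref{stable:6-dim}—rather than attempting a coordinate computation on a generic element. Identifying that common divisor (or common factor structure) in each of the eight cases is where the real attention is needed.
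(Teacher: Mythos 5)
Your overall strategy matches the paper's in spirit: write down a generic closed $4$-form on each algebra and kill it with one of the two obstructions. In fact the paper's proof uses \emph{only} Lemma \ref{obs3} for all eight algebras (Corollary \ref{obs1}, via Lemma \ref{lem2}, is what the paper reserves for the algebras $\lie{l}_1,\lie{l}_2,\lie{l}_3$ of Proposition \ref{no-cocalibrated2}), and for six of the eight algebras your plan would go through exactly as you describe: a fixed pair of basis vectors ($e_5,e_6$ or $e_5,e_7$) satisfies $(\iota_X\iota_Y\kappa)^2=0$ for every closed $4$-form $\kappa$.

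The genuine gap is in $\lie{g}_1$ and $\lie{g}_8$. There, no fixed pair $X,Y$ chosen independently of $\kappa$ does the job: for $\lie{g}_1$ the space of closed $4$-forms contains the free terms $c_{2356}e^{2356}$ and $c_{2456}e^{2456}$, and when both coefficients are nonzero the pairs $(e_3,e_6)$ and $(e_4,e_6)$ both fail. The paper resolves this by letting the contracting vectors depend on the coefficients of $\kappa$: it splits into cases and, when $c_{2356}c_{2456}\neq 0$, takes $X=c_{2356}\,e_4-c_{2456}\,e_3$, $Y=e_6$ (and similarly $X=c_{2346}\,e_5+c_{1456}\,e_4$ for $\lie{g}_8$). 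This amounts to using Lemma \ref{obs3} with the quantifiers reversed --- for every closed $\kappa$ there \emph{exist} linearly independent $X,Y$ with $(\iota_X\iota_Y\kappa)^2=0$ --- which is still a valid obstruction, since a genuine $\Gtwo$ $4$-form \eqref{eqn:4-forma G2} has $(\iota_X\iota_Y\phi)^2\neq 0$ for \emph{every} linearly independent pair. Your plan as written (a fixed coordinate pair per algebra, with Corollary \ref{obs1} as fallback) leaves these two algebras unhandled: you neither allow $\kappa$-dependent contracting vectors, nor verify that condition \eqref{condition:obst} actually holds for some central vector of $\lie{g}_1$ or $\lie{g}_8$ --- the latter is not obvious and is not what the paper does. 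Without one of those two ingredients the argument is incomplete precisely at the delicate point you yourself flag, namely making the obstruction work for every closed $4$-form rather than a generic one.
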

\begin{proof}
 Using Lemma \ref{obs3}, we will prove that each Lie algebra  $\lie{g}_s$, $s\in\left\{1,\dots,8\right\}$, listed in the statement,
it does not admit any coclosed $\Gtwo$-structure.
For this, we will show that there are non-zero vectors $X_s, Y_{s}\in\lie{g}_s$ such that $\Big(\iota_{X_{s}}(\iota_{Y_{s}}\kappa_{s})\Big)^2=0$,
for any closed $4$-form $\kappa_{s}$ on  $\lie{g}_s$.
 \paragraph{${\bf s=1}$}  A generic closed 4-form $\kappa_1$ on $\lie{g}_1=(0,0,0,0,12,15,0)$ has the following expression
\begin{equation*}
\begin{aligned}
\kappa_1=&c_{1234} e^{1234}+c_{1235} e^{1235}+c_{1236} e^{1236}+c_{1237} e^{1237}+c_{1245} e^{1245}+c_{1246} e^{1246}+c_{1247} e^{1247}\\
&+c_{1256} e^{1256}+c_{1257} e^{1257}+c_{1267} e^{1267}+c_{1345} e^{1345}+c_{1346} e^{1346}+c_{1347} e^{1347}+c_{1356} e^{1356}\\
&+c_{1357} e^{1357}+c_{1367} e^{1367}+c_{1456} e^{1456}+c_{1457} e^{1457}+c_{1467} e^{1467}+c_{1567} e^{1567}+c_{2345} e^{2345}\\
&+c_{2347} e^{2347}+c_{2356} e^{2356}+c_{2357} e^{2357}+c_{2456} e^{2456}+c_{2457} e^{2457}+c_{2567} e^{2567},
\end{aligned}
\end{equation*}
where $c_{ijkl}$ are arbitrary real numbers. Now one can check that if the coefficient $c_{2356}$ of $\kappa_1$ vanishes, then
$$
 \Big(\iota_{e_3}(\iota_{e_6}\kappa_1)\Big)^2=0,
$$
that is $\lie{g}_1$ satisfies the hypothesis of Lemma \ref{obs3} for $X_{1}=e_3$ and $Y_{1}=e_6$.
If $c_{2456}=0$, then $$ \Big(\iota_{e_4}(\iota_{e_6}\kappa_1)\Big)^2=0.$$ But if 
$c_{2356}$ and $c_{2456}$ are both non-zero, then for $X_{1}=c_{2356}\,e_4-c_{2456}\,e_3$ and $Y_{1}=e_6$,  we have
$$ \Big(\iota_{c_{2 3 5 6}\,e_4-c_{2 4 5 6}\,e_3}(\iota_{e_6}\kappa_1)\Big)^2=0.$$
 \paragraph{${\bf s=2}$}  A generic closed 4-form $\kappa_2$ on $\lie{g}_2=(0,0,0,0,23,34,36)$ has the following expression
\begin{equation*}
\begin{aligned}
\kappa_2=&c_{1234} e^{1234}+c_{1235} e^{1235}+c_{1236} e^{1236}+c_{1237} e^{1237}+c_{1245} e^{1245}+c_{1246} e^{1246}+c_{1247} e^{1247}\\
&+c_{1345} e^{1345}+c_{1346} e^{1346}+c_{1347} e^{1347}+c_{1356} e^{1356}+c_{1357} e^{1357}+c_{1367} e^{1367}\\
&-c_{1247} e^{1456}+c_{1467} e^{1467}+c_{2345} e^{2345}+c_{2346} e^{2346}+c_{2347} e^{2347}+c_{2356} e^{2356}\\
&+c_{2357} e^{2357}+c_{2367} e^{2367}+c_{2456} e^{2456}+c_{2467} e^{2467}+c_{3456} e^{3456}+c_{3457} e^{3457}+c_{3467} e^{3467},\\
&+c_{3567} e^{3567},
\end{aligned}
\end{equation*}
where $c_{ijkl}$ are arbitrary real numbers. Taking $X_2=e_5$ and $Y_2=e_7$, we have $\Big(\iota_{e_5}(\iota_{e_7}\kappa_2)\Big)^2=0$.

 \paragraph{${\bf s=3}$}  A generic closed 4-form $\kappa_3$ on $\lie{g}_3=(0,0,0,12,13,14,0)$ is expressed as follows
 \begin{equation*}
\begin{aligned}
\kappa_3=&c_{1234} e^{1234}+c_{1235} e^{1235}+c_{1236} e^{1236}+c_{1237} e^{1237}+c_{1245} e^{1245}+c_{1246} e^{1246}+c_{1247} e^{1247}\\
&+c_{1256} e^{1256}+c_{1257} e^{1257}+c_{1267} e^{1267}+c_{1345} e^{1345}+c_{1346} e^{1346}+c_{1347} e^{1347}+c_{1356} e^{1356}\\
&+c_{1357} e^{1357}+c_{1367} e^{1367}+c_{1456} e^{1456}+c_{1457} e^{1457}+c_{1467} e^{1467}+c_{1567} e^{1567}\\
&+c_{2345} e^{2345}+c_{2346} e^{2346}+c_{2347} e^{2347}+c_{2357} e^{2357}+c_{2367} (e^{2367}+e^{2457})+c_{2467} e^{2467}.
\end{aligned}
\end{equation*}
where $c_{ijkl}$ are arbitrary real numbers. For $X_3=e_5$ and $Y_3=e_6$, we have $ \Big(\iota_{e_5}(\iota_{e_6}\kappa_3)\Big)^2=0$.
 \paragraph{${\bf s=4}$} A generic closed 4-form $\kappa_4$ on $\lie{g}_4=(0,0,0,12,14,24,0)$ is expressed as follows
 \begin{equation*}
\begin{aligned}
\kappa_4=&c_{1234} e^{1234}+c_{1235} e^{1235}+c_{1236} e^{1236}+c_{1237} e^{1237}+c_{1245} e^{1245}+c_{1246} e^{1246}+c_{1247} e^{1247}\\
&+c_{1256} e^{1256}+c_{1257} e^{1257}+c_{1267} e^{1267}+c_{1345} e^{1345}+c_{1346} e^{1346}+c_{1347} e^{1347}+c_{1357} e^{1357}\\
&+c_{1367} e^{1367}+c_{1456} e^{1456}+c_{1457} e^{1457}+c_{1467} e^{1467}+c_{2345} e^{2345}+c_{2346} e^{2346}+c_{2347} e^{2347}\\
&+c_{1367} e^{2357}+c_{2367} e^{2367}+c_{2456} e^{2456}+c_{2457} e^{2457}+c_{2467} e^{2467}+c_{3567} e^{3567},
\end{aligned}
\end{equation*}
where $c_{ijkl}$ are arbitrary real numbers. Then, for $X_4=e_5$ and $Y_4=e_6$, we have $\Big(\iota_{e_5}(\iota_{e_6}\kappa_4)\Big)^2=0$.

 \paragraph{${\bf s=5}$} 
 A generic closed 4-form $\kappa_5$ on $\lie{g}_5=(0,0,12,13,14,23+15,0)$ has the following expression
\begin{equation*}
\begin{aligned}
\kappa_5=&c_{1234} e^{1234}+c_{1235} e^{1235}+c_{1236} e^{1236}+c_{1237} e^{1237}+c_{1245} e^{1245}+c_{1246} e^{1246}+c_{1247} e^{1247}\\
&+c_{1256} e^{1256}+c_{1257} e^{1257}+c_{1267} e^{1267}+c_{1345} e^{1345}+c_{1346} e^{1346}+c_{1347} e^{1347}+c_{1356} e^{1356}\\
&+c_{1357} e^{1357}+c_{1367} e^{1367}+c_{1456} e^{1456}+c_{1457} e^{1457}+c_{1467} e^{1467}+c_{1567} e^{1567}+c_{2345} e^{2345}\\
&+c_{1456} e^{2346}+c_{2347} e^{2347}-c_{1467} e^{2357}+c_{2367} e^{2367}-(c_{1567}+c_{2367}) e^{2457},
\end{aligned}
\end{equation*}
where $c_{ijkl}$ are arbitrary real numbers. Then, for $X_5=e_5$ and $Y_5=e_6$, we have $\Big(\iota_{e_5}(\iota_{e_6}\kappa_5)\Big)^2=0$.
 \paragraph{${\bf s=6}$} 
 A generic closed 4-form $\kappa_6$ on $\lie{g}_6=(0,0,12,13,14,15,0)$ has the following expression
\begin{equation*}
\begin{aligned}
\kappa_6=&c_{1234} e^{1234}+c_{1235} e^{1235}+c_{1236} e^{1236}+c_{1237} e^{1237}+c_{1245} e^{1245}+c_{1246} e^{1246}+c_{1247} e^{1247}\\
&+c_{1256} e^{1256}+c_{1257} e^{1257}+c_{1267} e^{1267}+c_{1345} e^{1345}+c_{1346} e^{1346}+c_{1347} e^{1347}+c_{1356} e^{1356}\\
&+c_{1357} e^{1357}+c_{1367} e^{1367}+c_{1456} e^{1456}+c_{1457} e^{1457}+c_{1467} e^{1467}+c_{1567} e^{1567}+c_{2345} e^{2345}\\
&+c_{2347} e^{2347}+c_{2367} e^{2367} -c_{2367} e^{2457},\\
\end{aligned}
\end{equation*}
where $c_{ijkl}$ are arbitrary real numbers. So, for $X_6=e_5$ and $Y_6=e_7$, $\Big(\iota_{e_5}(\iota_{e_7}\kappa_6)\Big)^2=0$.
 \paragraph{${\bf s=7}$}
 A generic closed 4-form $\kappa_6$ on $\lie{g}_7=(0,0,12,13,14,34-25,0)$ has the following expression
\begin{equation*}
\begin{aligned}
\kappa_7=&c_{1234} e^{1234}+c_{1235} e^{1235}+c_{1236} e^{1236}+c_{1237} e^{1237}+c_{1245} e^{1245}+c_{1246} e^{1246}+c_{1247} e^{1247}\\
&+c_{1256} e^{1256}+c_{1257} e^{1257}+c_{1267} e^{1267}+c_{1345} e^{1345}+c_{1256} e^{1346}+c_{1347} e^{1347}+c_{1356} e^{1356}\\
&+c_{1357} e^{1357}+c_{1367} e^{1367}+c_{1456} e^{1456}+c_{1457} e^{1457}+c_{1467} e^{1467}+c_{2345} e^{2345}+c_{2346} e^{2346}\\
&+c_{2347} e^{2347}-c_{1267} e^{2357}+c_{2367} e^{2367}-c_{1367} e^{2457}-c_{1467} e^{3457},
\end{aligned}
\end{equation*}
where $c_{ijkl}$ are arbitrary real numbers. Thus, $(i_{e_5}(i_{e_7}\kappa_7))^2=0$. 
\smallskip
 \paragraph{${\bf s=8}$} 
 A generic closed 4-form $\kappa_6$ on $\lie{g}_8=(0,0,12,13,14+23,34-25,0)$ has the following expression
\begin{equation*}
\begin{aligned}
\kappa_8=&c_{1234} e^{1234}+c_{1235} e^{1235}+c_{1236} e^{1236}+c_{1237} e^{1237}+c_{1245} e^{1245}+c_{1246} e^{1246}+c_{1247} e^{1247}\\
&+c_{1256} e^{1256}+c_{1257} e^{1257}+c_{1267} e^{1267}+c_{1345} e^{1345}+c_{1256} e^{1346}+c_{1347} e^{1347}+c_{1356} e^{1356}\\
&+c_{1357} e^{1357}+c_{1367} e^{1367}+c_{1456} e^{1456}+c_{1457} e^{1457}+c_{1467} e^{1467}+c_{2345} e^{2345}\\
&+c_{2346} e^{2346}+c_{2347} e^{2347}-c_{1456} e^{2356}-(c_{1267}+c_{1457})e^{2357}+c_{2367} e^{2367}-c_{1367} e^{2457}\\
&-c_{1467} e^{3457},
\end{aligned}
\end{equation*}
where $c_{ijkl}$ are arbitrary real numbers. On can check that if the coefficient $c_{1456}$ vanishes, then
$$
 \Big(\iota_{e_5}(\iota_{e_6}\kappa_8)\Big)^2=0,
$$
that is $\lie{g}_8$ satisfies the hypothesis of Lemma \ref{obs3} for $X_{8}=e_5$ and $Y_{8}=e_6$.
If $c_{2346}=0$, then $$ \Big(\iota_{e_4}(\iota_{e_6}\kappa_8)\Big)^2=0.$$ But if 
$c_{1456}$ and $c_{2346}$ are both non-zero, then for $X_{8}=c_{2346}\,e_5+c_{1456}\,e_4$ and $Y_{8}=e_6$,  we have
$$ \Big(\iota_{c_{2346}\,e_5+c_{1456}\,e_4}(\iota_{e_6}\kappa_8)\Big)^2=0.$$
\end{proof}

Moreover, using Lemma \ref{lem2}, we have the following proposition.

\begin{proposition}\label{no-cocalibrated2}
None of the following seven dimensional nilpotent Lie algebras
\begin{align*}
&\lie{l}_1=(0,0,0,12,13-24,14+23,0), \quad \lie{l}_2=(0,0,0,12,14,13-24,0), \\ 
&\lie{l}_3=(0,0,0,12,13+14,24,0)
\end{align*}
admits coclosed $\Gtwo$-structures.  
\end{proposition}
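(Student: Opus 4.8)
The plan is to reduce each case to the six\dash dimensional quotient by the central direction $e_7$ and to play the closedness of the dual four\dash form against Lemma \ref{lem2}.

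First I would observe that in each $\lie{l}_i$ the vector $e_7$ is central with $de^7=0$, so $\lie{l}_i=\lie{h}_i\oplus\R e_7$, where $\lie{h}_i$ is the six\dash dimensional nilpotent Lie algebra obtained by dropping the last slot. Assume, for contradiction, that $\lie{l}_i$ carries a coclosed $\Gtwo$\dash structure with dual four\dash form $\phi$, so that $d\phi=0$. Choosing a unit central vector $X$ in the line $\R e_7$ and applying Proposition \ref{prop1} (through Schulte's result), I obtain on $V=\ker(\iota_X g_\varphi)\cong\lie{h}_i$ an $\mathrm{SU}(3)$\dash structure $(\omega,\psi_-)$ with complex structure $J=J_{\psi_-}$ and Hermitian metric $h$, the three\dash form $\psi_-=-\iota_X\phi$ being closed and the component of $\phi$ tangent to $V$ being $\star_h\omega$. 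Equivalently, writing $\phi=\alpha+\beta\wedge e^7$ with $\alpha\in\Lambda^4\lie{h}_i^*$ and $\beta\in\Lambda^3\lie{h}_i^*$, the relation $de^7=0$ forces $d\alpha=0$ and $d\beta=0$, and $\beta=\iota_{e_7}\phi$ is a stable negative three\dash form proportional to $\psi_-$, hence (by Remark \ref{rem:stable3-forms} and scaling invariance of the construction) determines the same $J$.

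The key step is to pin down $J$ enough to produce a $J$\dash invariant four\dash plane. I would first show that the centre $\Span{e_5,e_6}$ of $\lie{h}_i$ is $J$\dash invariant: a direct check gives that each monomial $e^{i56}$ fails to be closed, so no closed three\dash form on $\lie{h}_i$ (in particular $\beta\propto\psi_-$) has a component involving both $e^5$ and $e^6$; feeding this into the defining formula $K_{\psi_-}(x)=\mu(\iota_x\psi_-\wedge\psi_-)$ shows that $K_{\psi_-}$ sends $e_5,e_6$ back into $\Span{e_5,e_6}$, whence $J$ preserves the centre. This yields a canonical $J$\dash invariant $2$\dash plane, which I would then enlarge, using the structure equations of each $\lie{h}_i$, to a suitable $J$\dash invariant $4$\dash plane $W$.

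Finally I would extract the contradiction from Lemma \ref{lem2}: on such a $J$\dash invariant $W$ the restriction $(\star_h\omega)|_W$ must be non\dash zero. On the other hand, computing the space of closed four\dash forms on $\lie{l}_i$ exactly as in Proposition \ref{no-cocalibrated1} shows that the relevant coefficients of $\phi$ are forced to vanish; transporting this vanishing to $V$ by replacing each basis vector $e_j$ with its projection $e_j-\frac{\eta(e_j)}{\eta(e_7)}\,e_7\in V$ (where $\eta=\iota_X g_\varphi$), and using that $\beta$ has no component on those $3$\dash subsets of the index set of $W$ that meet the centre in both $e^5$ and $e^6$, should give $(\star_h\omega)|_W=0$, the desired contradiction. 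Running this scheme separately for $\lie{l}_1,\lie{l}_2,\lie{l}_3$ completes the proof. The main obstacle is exactly this last step: the metric splitting $V=X^\perp$ is \emph{not} the algebraic splitting along $e^7$, so one must carefully track the difference between the $e^7$\dash horizontal part $\alpha$ of $\phi$ (which is closed) and the metric\dash horizontal part $\star_h\omega$, and verify for each $\lie{l}_i$ that $W$ can be chosen so that the forced\dash zero coefficients of $\phi$ and the centre\dash avoiding components of $\beta$ together kill $(\star_h\omega)|_W$.
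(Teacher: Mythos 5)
Your proposal follows the same overall route as the paper's proof (quotient by the central direction $e_7$, Proposition \ref{prop1}, a $J$-invariant four-plane, contradiction with Lemma \ref{lem2}), but the two decisive steps are left as sketches, and the first one does not follow from what you actually establish. Knowing that the centre $\Span{e_5,e_6}$ of $\lie{h}_i$ is $J$-invariant gives only a $2$-plane, and the only canonical enlargement it provides is the $h$-orthogonal complement $\Span{e_5,e_6}^{\perp_h}$; since that complement depends on the unknown metric $h$, you have no way to evaluate $(\star_h\omega)$ on it in terms of the coefficients of closed forms, so no contradiction can be extracted from it. What is really needed — and what the paper proves — is that the \emph{explicit coordinate} plane $W=\Span{e_3,e_4,e_5,e_6}$ is $J$-invariant. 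This requires strictly more than ``no component of $\psi_-$ contains both $e^5$ and $e^6$'': one must also show that a closed three-form on $\lie{h}_i$ has no components on $e^{345}$ and $e^{346}$, since these enter the entries $K_{ab}$ with $a\in\{1,2\}$, $b\in\{3,4\}$, whose vanishing is exactly the invariance of $W$. (These extra vanishings do hold for $\lie{l}_1,\lie{l}_2,\lie{l}_3$, and they are the content of the block-triangular matrices $K_{\nu_s}$ computed in the paper, but they are a separate computation you have skipped.) A further small logical slip: ``each monomial $e^{i56}$ fails to be closed'' does not by itself imply that no closed form has such components — non-closed monomials could in principle combine into a closed form — so one must solve the linear system $d\rho=0$ and check that the coefficients $c_{i56}$ (and $c_{345}$, $c_{346}$) are forced to vanish.

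The second gap is the one you flag yourself as ``the main obstacle'' and do not close: the vanishing $(\star_h\omega)|_W=0$. The paper's resolution is clean and disposes of your worry about the metric splitting versus the algebraic splitting. By \eqref{relation: forms} one has $\pi^*\sigma=\phi-\pi^*\psi_-\wedge\eta$, where $\sigma$ is the dual four-form of $\omega$ and $\eta$ is \emph{some} one-form on $\lie{l}_i$ that one never needs to determine. Since $\pi^*\sigma$ is basic, $\sigma(e_3,e_4,e_5,e_6)$ equals the $e^{3456}$-coefficient of $\phi$ minus the $e^{3456}$-coefficient of $\pi^*\psi_-\wedge\eta$. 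The first vanishes because the generic closed four-form on $\lie{l}_i$ has no $e^{3456}$ term; the second vanishes \emph{for every} $\eta$, because $\psi_-$ (proportional to $-\pi_*(\iota_{e_7}\phi)$ with $\phi$ closed) has no components supported on $\{3,4,5,6\}$, so wedging it with any one-form cannot produce $e^{3456}$. The arbitrariness of $\eta$ is precisely what absorbs the discrepancy between $X^{\perp}$ and the algebraic complement of $\R e_7$; your projection formula $e_j-\frac{\eta(e_j)}{\eta(e_7)}e_7$ could be made to work, but you never carry out the cancellation. As submitted, the argument is a correct plan with the two key computations missing, not a proof.
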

\begin{proof}
We will prove, by contradiction,  that no closed 4-form $\tau_s$ defines a coclosed $\Gtwo$-structure on the Lie algebra $\lie{l}_s$ $(s=1, 2, 3)$. We proceed case by case.
\paragraph*{For $\bf s=1$} A generic closed 4-form $\tau_1$ on $\lie{l}_1=(0,0,0,12,13-24,14+23,0)$ has the following expression 
\begin{equation*}
\begin{aligned}
\tau_1=&c_{1234} e^{1234}+c_{1235} e^{1235}+c_{1236} e^{1236}+c_{1237} e^{1237}+c_{1245} e^{1245}+c_{1246} e^{1246}+c_{1247} e^{1247}\\
&+c_{1256} e^{1256}+c_{1257} e^{1257}+c_{1267} e^{1267}+c_{1345} e^{1345}+c_{1346} e^{1346}+c_{1347} e^{1347}+c_{1356} e^{1356}\\
&+c_{1357} e^{1357}+c_{1367} e^{1367}+c_{1456} e^{1456}+c_{1457} e^{1457}+c_{1467} e^{1467}+c_{2345} e^{2345}+c_{2346} e^{2346}\\
&+c_{2347} e^{2347}-c_{1456} e^{2356}+c_{2357} e^{2357}+c_{2367} e^{2367}-c_{1356} e^{2456}+(c_{1357}+c_{1467}+c_{2367}) e^{2457}\\
&+c_{2467} e^{2467},
\end{aligned}
\end{equation*}
where $c_{ijkl}$ are arbitrary real numbers. Let us suppose that, for some real numbers $c_{ijkl}$, the 4-form $\tau_1$ defines a coclosed $\Gtwo$-structure on $\lie{l}_1$.
Since $e_7$ is in the center of $\lie{l}_1$, Proposition \ref{prop1} implies that 
$$\nu_1=-\pi_*(\iota_{e_7}\tau_1)$$
 is a negative 3-form on the Lie algebra 
$\lie{h}_1=\lie{l}_1/\Span{e_7}$, where $\pi: \lie{l}_1\rightarrow \lie{h}_1$ is the projection. Thus, 
\begin{align*}
\nu_1=&c_{1 2 3 7}e^{123}+c_{1 2 4 7}e^{124}+c_{1 2 5 7}e^{125}+c_{1 2 6 7}e^{126}+c_{1 3 4 7}e^{134}+c_{1 3 5 7}e^{135}+c_{1 3 6 7}e^{136}+c_{1 4 5 7}e^{145}\\
&+c_{1 4 6 7}e^{146}+c_{2 3 4 7}e^{234}+c_{2 3 5 7}e^{235}+c_{2 3 6 7}e^{236}+(c_{1357}+c_{1467}+c_{2367}) e^{245}+c_{2 4 6 7}e^{246}.
\end{align*}
 We claim that the map $K_{\nu_1}$, defined by \eqref{def:capital-K}, has the following expression
$$
K_{\nu_1}=(K^{\nu_1}_{ab})\otimes e^{123456}=\left(\begin{matrix}
K^{\nu_1}_{11}&K^{\nu_1}_{12}&0&0&0&0\\
K^{\nu_1}_{21}&K{\nu_1}_{22}&0&0&0&0\\
K^{\nu_1}_{31}&K^{\nu_1}_{32}&K^{\nu_1}_{33}&K^{\nu_1}_{34}&0&0\\
K^{\nu_1}_{41}&K^{\nu_1}_{42}&K^{\nu_1}_{43}&K^{\nu_1}_{44}&0&0\\
K^{\nu_1}_{51}&K^{\nu_1}_{52}&K^{\nu_1}_{53}&K^{\nu_1}_{54}&K^{\nu_1}_{55}&K^{\nu_1}_{56}\\
K^{\nu_1}_{61}&K^{\nu_1}_{62}&K^{\nu_1}_{63}&K^{\nu_1}_{64}&K^{\nu_1}_{65}&K^{\nu_1}_{66}\\
\end{matrix}\right)\otimes e^{123456},
$$ 
where $a, b\in\{1,\ldots,6\}$, and $K^{\nu_1}_{ab}$ is a polynomial function of the coefficients $c_{ijkl}$ that appear in the expression of $\nu_1$.
In fact, by \eqref{def:capital-K} it turns out that 
$$
(\iota_{e_b}\nu_1)\wedge\nu_1=\sum_{1\leq a \leq 6} K^{\nu_1}_{ab}\,e^{1\dots\hat{a}\dots 6}.
$$
Therefore, $K^{\nu_1}_{ab}\,e^{1\dots 6}=(\iota_{e_b}\nu_1)\wedge\nu_1\wedge e^{a}$. Then, one can check
that $K^{\nu_1}_{ab}=0$, for $a=1,2$ and $b\geq 3$, and  also $K^{\nu_1}_{ab}=0$, for $a=3,4$ and $b=5,6$. Thus, the claim is true.

Since $\nu_1\in\Lambda_{-}({\lie{h}_1}^*)$, \eqref{complex-1} and Theorem \ref{stable:6-dim} imply that $\nu_1$ defines the almost complex structure $J_{\nu_1}$ on 
$\lie{h}_1$ given by
$$J_{\nu_1}=\frac{1}{\sqrt{|-\lambda(\nu_1)|}}\left(\begin{matrix}
K^{\nu_1}_{11}&K^{\nu_1}_{12}&0&0&0&0\\
K^{\nu_1}_{21}&K^{\nu_1}_{22}&0&0&0&0\\
K^{\nu_1}_{31}&K^{\nu_1}_{32}&K^{\nu_1}_{33}&K^{\nu_1}_{34}&0&0\\
K^{\nu_1}_{41}&K^{\nu_1}_{42}&K^{\nu_1}_{43}&K^{\nu_1}_{44}&0&0\\
K^{\nu_1}_{51}&K^{\nu_1}_{52}&K^{\nu_1}_{53}&K^{\nu_1}_{54}&K^{\nu_1}_{55}&K^{\nu_1}_{56}\\
K^{\nu_1}_{61}&K^{\nu_1}_{62}&K^{\nu_1}_{63}&K^{\nu_1}_{64}&K^{\nu_1}_{65}&K^{\nu_1}_{66}\\
\end{matrix}\right).$$ 
Therefore, the subspace $W=\Span{e_3,e_4,e_5,e_6}$ 
is $J_{\nu_1}$-invariant.   Now, consider an 
arbitrary 1-form $\eta=\sum_{r=1}^{7} C_{r}e^r$ on $\lie{l}_1$. According with \eqref{relation: forms}, and taking into account the 
expressions of $\tau_{1}$ and $\nu_1$, we see that the 4-form $\sigma=\pi_*\left(\tau_1-\pi^*\nu_1\wedge\eta\right)$ on $\lie{h}_1$, 
has zero component in $e^{3456}$ vanishes. Hence,
${\sigma}|_W=\sigma(e_3,e_4,e_5,e_6)=0$ contradicting Lemma \ref{lem2}. Thus $\tau_1$ never defines a coclosed $\Gtwo$-structure.

\paragraph*{For $\bf s=2$} A generic closed four-form on $\lie{l}_2=(0,0,0,12,14,13-24,0)$ has the following expression 
\begin{equation*}
\begin{aligned}
\tau_2= &c_{1234} e^{1234}+c_{1235} e^{1235}+c_{1236} e^{1236}+c_{1237} e^{1237}+c_{1245} e^{1245}+c_{1246} e^{1246}+c_{1247} e^{1247}\\
&+c_{1256} e^{1256}+c_{1257} e^{1257}+c_{1267} e^{1267}+c_{1345} e^{1345}+c_{2456} e^{1346}+c_{1347} e^{1347}+c_{2456} e^{1356}\\
&+c_{1357} e^{1357}+ (c_{2467} - c_{2357}) e^{1367}+c_{1456} e^{1456}+c_{1457} e^{1457}+c_{1467} e^{1467}+c_{2345} e^{2345}\\
&+c_{2346} e^{2346}+c_{2347} e^{2347}+c_{2357} e^{2357}+c_{2367} e^{2367}+c_{2456} e^{2456}+c_{2457} e^{2457} +c_{2467} e^{2467},
\end{aligned}
\end{equation*}
where $c_{ijkl}$ are arbitrary real numbers. Let us suppose that, for some real numbers $c_{ijkl}$, the 4-form $\tau_2$ defines a coclosed $\Gtwo$-structure on $\lie{l}_2$. Since $e_7$ is in the center of $\lie{l}_2$,  by Proposition \ref{prop1} we have that  $\nu_2=-\pi_*(i_{e_7}\tau_2)$ is a negative $3$-form on  $ \frak h_2 = \lie{l}_2/\Span{e_7}$ where 
 $\pi: \lie{l}_2\rightarrow \frak h_2$  is the natural projection. Thus, $\nu_2$ is given by 
\begin{align*}
\nu_2=&c_{1 2 3 7}e^{123}+c_{1 2 4 7}e^{124}+c_{1 2 5 7}e^{125}+c_{1 2 6 7}e^{126}+c_{1 3 4 7}e^{134}+c_{1 3 5 7}e^{135}+c_{1 3 6 7}e^{136}+c_{1 4 5 7}e^{145}+\\&c_{1 4 6 7}e^{146}+c_{2 3 4 7}e^{234}+c_{2 3 5 7}e^{235}+c_{2 3 6 7}e^{236}+c_{2 4 5 7}e^{245}+(c_{2 3 5 7}+c_{1 3 6 7})e^{246}.
\end{align*}
We claim that the map $K_{\nu_2}$, defined by \eqref{def:capital-K}, has the following expression
$$K_{\nu_2}=(K^{\nu_2}_{ab})\otimes e^{123456}=\left(\begin{matrix}
K^{\nu_2}_{11}&K^{\nu_2}_{12}&0&0&0&0\\
K^{\nu_2}_{21}&K^{\nu_2}_{22}&0&0&0&0\\
K^{\nu_2}_{31}&K^{\nu_2}_{32}&K^{\nu_2}_{33}&K^{\nu_2}_{34}&0&0\\
K^{\nu_2}_{41}&K^{\nu_2}_{42}&K^{\nu_2}_{43}&K^{\nu_2}_{44}&0&0\\
K^{\nu_2}_{51}&K^{\nu_2}_{52}&K^{\nu_2}_{53}&K^{\nu_2}_{54}&K^{\nu_2}_{55}&K^{\nu_2}_{56}\\
K^{\nu_2}_{61}&K^{\nu_2}_{62}&K^{\nu_2}_{63}&K^{\nu_2}_{64}&K^{\nu_2}_{65}&K^{\nu_2}_{66}\\
\end{matrix}\right)\otimes e^{123456},$$
where $a, b\in\{1,\ldots,6\}$, and $K^{\nu_2}_{ab}$ is a polynomial function of the coefficients $c_{ijkl}$ that appear in the expression of $\nu_2$.
In fact, by \eqref{def:capital-K} it turns out that 
$$
(\iota_{e_b}\nu_2)\wedge\nu_2=\sum_{1\leq a \leq 6} K^{\nu_2}_{ab}\,e^{1\dots\hat{a}\dots 6}.
$$
Therefore, $K^{\nu_2}_{ab}\,e^{1\dots 6}=(\iota_{e_b}\nu_2)\wedge\nu_2\wedge e^{a}$. Then, one can check
that $K^{\nu_2}_{ab}=0$, for $a=1,2$ and $b\geq 3$, and  also $K^{\nu_2}_{ab}=0$, for $a=3,4$ and $b=5,6$. Thus, the claim is true.\par 
Since $\nu_2\in\Lambda_{-}({\lie{h}_2}^*)$, \eqref{complex-1} and Theorem \ref{stable:6-dim} imply that $\nu_2$ defines the almost complex structure $J_{\nu_2}$ on 
$\lie{h}_2$ given by 
$$J_{\nu_2}=\frac{1}{|\sqrt{-\lambda(\nu_2)}|}\left(\begin{matrix}
K^{\nu_2}_{11}&K^{\nu_2}_{12}&0&0&0&0\\
K^{\nu_2}_{21}&K^{\nu_2}_{22}&0&0&0&0\\
K^{\nu_2}_{31}&K^{\nu_2}_{32}&K^{\nu_2}_{33}&K^{\nu_2}_{34}&0&0\\
K^{\nu_2}_{41}&K^{\nu_2}_{42}&K^{\nu_2}_{43}&K^{\nu_2}_{44}&0&0\\
K^{\nu_2}_{51}&K^{\nu_2}_{52}&K^{\nu_2}_{53}&K^{\nu_2}_{54}&K^{\nu_2}_{55}&K^{\nu_2}_{56}\\
K^{\nu_2}_{61}&K^{\nu_2}_{62}&K^{\nu_2}_{63}&K^{\nu_2}_{64}&K^{\nu_2}_{65}&K^{\nu_2}_{66}\\
\end{matrix}\right).$$ 
Therefore, the subspace $W=\Span{e_3,e_4,e_5,e_6}$ 
is $J_{\nu_2}$-invariant.
Now, consider an 
arbitrary 1-form $\eta=\sum_{r=1}^{7} C_{r}e^r$ on $\lie{l}_2$. According with \eqref{relation: forms}, and taking into account the 
expressions of $\tau_{2}$ and $\nu_2$, we see that the 4-form $\sigma=\pi_*\left(\tau_2-\pi^*\nu_2\wedge\eta\right)$ on $\lie{h}_2$, 
has zero component in $e^{3456}$. Hence,
${\sigma}|_W=\sigma(e_3,e_4,e_5,e_6)=0$ contradicting Lemma \ref{lem2}. Thus $\tau_2$ never defines a coclosed $\Gtwo$-structure.

\paragraph*{For $\bf s=3$}
A generic closed four-form on $\lie{l}_3 = (0,0,0,12,13+14,24,0)$ has the following expression 
\begin{equation*}
\begin{aligned}
\tau_2=&c_{1234} e^{1234}+c_{1235} e^{1235}+c_{1236} e^{1236}+c_{1237} e^{1237}+c_{1245} e^{1245}+c_{1246} e^{1246}+c_{1247} e^{1247}\\
&+c_{1256} e^{1256}+c_{1257} e^{1257}+c_{1267} e^{1267}+c_{1345} e^{1345}+c_{1346} e^{1346}+c_{1347} e^{1347}\\
&+c_{1357} e^{1357}+ c_{1367} e^{1367}+c_{1456} e^{1456}+c_{1457} e^{1457}+c_{1467} e^{1467}+c_{2345} e^{2345}+c_{2346} e^{2346}\\
&+c_{2347} e^{2347}+c_{2357} e^{2357}+c_{2367} e^{2367}+c_{2356} e^{2456}+(c_{1367}+c_{2357}) e^{2457} +c_{2467} e^{2467},
\end{aligned}
\end{equation*}
where $c_{ijkl}$ are arbitrary real numbers.  Let us suppose that, for some real numbers $c_{ijkl}$, the 4-form $\tau_3$ defines a coclosed $\Gtwo$-structure on $\lie{l}_2$. Since $e_7$ is in the center of $\lie{l}_1$,  by Proposition \ref{prop1} we have that  $\nu_3=-\pi_*(i_{e_7}\tau_3)$ is a negative $3$-form on  $ \frak h_3 = \lie{l}_3/\Span{e_7}$ where 
 $\pi: \lie{l}_3\rightarrow \frak h_3$  is the natural projection. Thus, $\nu_3$ is given by 
\begin{align*}
\nu_3=&c_{1 2 3 7}e^{123}+c_{1 2 4 7}e^{124}+c_{1 2 5 7}e^{125}+c_{1 2 6 7}e^{126}+c_{1 3 4 7}e^{134}+c{1 3 5 7}e^{135}+c_{1 3 6 7}e^{136}+c_{1 4 5 7}e^{145}+\\&c_{1 4 6 7}e^{146}+c_{2 3 4 7}e^{234}+c_{2 3
 5 7}e^{235}+c_{2 3 6 7}e^{236}+(c_{2 3 5 7}-c_{1 3 6 7})e^{245}+c_{2 4 6 7}e^{246}.
\end{align*}
We claim that the map $K_{\nu_3}$, defined by \eqref{def:capital-K}, has the following expression
$$K_{\nu_3}=(K^{\nu_3}_{ab})\otimes e^{123456}=\left(\begin{matrix}
K^{\nu_3}_{11}&K^{\nu_3}_{12}&0&0&0&0\\
K^{\nu_3}_{21}&K^{\nu_3}_{22}&0&0&0&0\\
K^{\nu_3}_{31}&K^{\nu_3}_{32}&K^{\nu_3}_{33}&K^{\nu_3}_{34}&0&0\\
K^{\nu_3}_{41}&K^{\nu_3}_{42}&K^{\nu_3}_{43}&K^{\nu_3}_{44}&0&0\\
K^{\nu_3}_{51}&K^{\nu_3}_{52}&K^{\nu_3}_{53}&K^{\nu_3}_{54}&K^{\nu_3}_{55}&K^{\nu_3}_{56}\\
K^{\nu_3}_{61}&K^{\nu_3}_{62}&K^{\nu_3}_{63}&K^{\nu_3}_{64}&K^{\nu_3}_{65}&K^{\nu_3}_{66}\\
\end{matrix}\right)\otimes e^{123456},$$
where $a, b\in\{1,\ldots,6\}$, and $K^{\nu_3}_{ab}$ is a polynomial function of the coefficients $c_{ijkl}$ that appear in the expression of $\nu_3$.
In fact, by \eqref{def:capital-K} it turns out that 
$$
(\iota_{e_b}\nu_3)\wedge\nu_3=\sum_{1\leq a \leq 6} K^{\nu_3}_{ab}\,e^{1\dots\hat{a}\dots 6}.
$$
Therefore, $K^{\nu_3}_{ab}\,e^{1\dots 6}=(\iota_{e_b}\nu_3)\wedge\nu_3\wedge e^{a}$. Then, one can check
that $K^{\nu_3}_{ab}=0$, for $a=1,2$ and $b\geq 3$, and  also $K^{\nu_3}_{ab}=0$, for $a=3,4$ and $b=5,6$. Thus, the claim is true.\par 
Since $\nu_3\in\Lambda_{-}({\lie{h}_3}^*)$, \eqref{complex-1} and Theorem \ref{stable:6-dim} imply that $\nu_3$ defines the almost complex structure $J_{\nu_32}$ on 
$\lie{h}_3$ given by 
$$J_{\nu_3}=\frac{1}{|\sqrt{-\lambda(\nu_3)}|}\left(\begin{matrix}
K^{\nu_3}_{11}&K^{\nu_3}_{12}&0&0&0&0\\
K^{\nu_3}_{21}&K^{\nu_3}_{22}&0&0&0&0\\
K^{\nu_3}_{31}&K^{\nu_3}_{32}&K^{\nu_3}_{33}&K^{\nu_3}_{34}&0&0\\
K^{\nu_3}_{41}&K^{\nu_3}_{42}&K^{\nu_3}_{43}&K^{\nu_3}_{44}&0&0\\
K^{\nu_3}_{51}&K^{\nu_3}_{52}&K^{\nu_3}_{53}&K^{\nu_3}_{54}&K^{\nu_3}_{55}&K^{\nu_3}_{56}\\
K^{\nu_3}_{61}&K^{\nu_3}_{62}&K^{\nu_3}_{63}&K^{\nu_3}_{64}&K^{\nu_3}_{65}&K^{\nu_3}_{66}\\
\end{matrix}\right).$$ 
Therefore, the subspace $W=\Span{e_3,e_4,e_5,e_6}$ 
is $J_{\nu_3}$-invariant.
Now, consider an 
arbitrary 1-form $\eta=\sum_{r=1}^{7} C_{r}e^r$ on $\lie{l}_3$. According with \eqref{relation: forms}, and taking into account the 
expressions of $\tau_{3}$ and $\nu_3$, we see that the 4-form $\sigma=\pi_*\left(\tau_3-\pi^*\nu_3\wedge\eta\right)$ on $\lie{h}_3$, 
has zero component in $e^{3456}$. Hence,
${\sigma}|_W=\sigma(e_3,e_4,e_5,e_6)=0$ contradicting Lemma \ref{lem2}. Thus $\tau_3$ never defines a coclosed $\Gtwo$-structure.
\end{proof}

 By Gong's classification there exist, up to isomorphism, 35 decomposable  $7$-dimensional nilpotent Lie algebras. We will show that 24 of these Lie algebras admit  a coclosed $\Gtwo$-structure.

\begin{theorem}\label{decomposable-cocalibrated}
Among the 35 decomposable nilpotent Lie algebras, of dimension 7, those that have a  coclosed $\Gtwo$-structure 
are 
\begin{align*}
&\lie{n}_1=(0,0,0,0,0,0,0), \qquad \qquad \qquad \quad\,\,\,\, \lie{n}_2=(0,0,0,0,0,12,0),\\
&\lie{n}_3=(0,0,0,0,0,12+34,0),\qquad \qquad \quad \lie{n}_4=(0,0,0,0,12,13,0), \\ 
&\lie{n}_5=(0,0,0,0,12,34,0), \quad \quad \quad\quad\quad \quad\,\,\,\,  \lie{n}_6=(0,0,0,0,13-24,14+23,0), \\
&\lie{n}_7=(0,0,0,0,12,14+23,0),\quad \quad \quad\quad\,\,\,  \lie{n}_8=(0,0,0,0,12,14+25,0),\\
&\lie{n}_9=(0,0,0,0,12,15+34,0), \quad \quad \quad\quad\,\,  \lie{n}_{10}=(0,0,0,12,13,23,0),\\
&\lie{n}_{11}=(0,0,0,12,13,24,0), \qquad \qquad \quad \,\,\,\, \lie{n}_{12}=(0,0,0,12,13,14+23,0),\\
&\lie{n}_{13}=(0,0,0,12,23,14+35,0),  \quad \quad \quad\,\,\,  \lie{n}_{14}=(0,0,0,12,23,14-35,0), \\ 
&\lie{n}_{15}=(0,0,0,12,13,14+35,0), \quad \quad \quad\,\,\,  \lie{n}_{16}=(0,0,0,12,14,15,0), \\
&\lie{n}_{17}=(0,0,0,12,14,15+24,0),\quad \quad \quad\,\,\,  \lie{n}_{18}=(0,0,0,12,14,15+24+23,0),\\
&\lie{n}_{19}=(0,0,0,12,14,15+23,0), \quad \quad \quad\,\,\,   \lie{n}_{20}=(0,0,0,12,14-23,15+34,0), \\
&\lie{n}_{21}=(0,0,12,13,23,14+25,0), \quad \quad \quad  \lie{n}_{22}=(0,0,12,13,23,14-25,0), \\
&\lie{n}_{23}=(0,0,12,13,23,14,0), \quad \quad \quad\quad \quad\, \lie{n}_{24}=(0,0,12,13,14+23,15+24,0).
\end{align*}
\end{theorem}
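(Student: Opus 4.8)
The plan is to combine a counting argument with the decomposition machinery developed above. By Gong's classification \cite{Gong} there are exactly $35$ decomposable $7$-dimensional nilpotent Lie algebras, and each is of the form $\lie{g}=\lie{h}\oplus\R$ with $\lie{h}$ a $6$-dimensional nilpotent Lie algebra. For the $24$ algebras $\lie{n}_1,\dots,\lie{n}_{24}$ in the statement the splitting is manifest: the last structure equation reads $de^7=0$ and $e^7$ is absent from all other differentials, so $\R e_7$ is a central abelian ideal and $\lie{h}$ is obtained by deleting the last entry. I would prove the theorem by partitioning the $35$ algebras into those that do and those that do not admit a coclosed $\Gtwo$-structure, and checking that these two sets have sizes $24$ and $11$ respectively.

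For the existence half I would use the reduction recalled before the theorem: on $\lie{g}=\lie{h}\oplus\R$ with the orthogonal splitting, the $4$-form
$$
\phi=\tfrac12\,\omega^2+\psi_-\wedge e^7
$$
is the Hodge dual of a positive $3$-form, and $d\phi=0$ is equivalent to $d\omega^2=0$ and $d\psi_-=0$, i.e.\ to $(\omega,\psi_-)$ being a half-flat $\mathrm{SU}(3)$-structure on $\lie{h}$ (since $e^7$ is closed and central it contributes nothing to $d\phi$). By \cite{Conti:HalfFlat} exactly $24$ six-dimensional nilpotent Lie algebras carry a half-flat structure. I would then check that the $24$ algebras $\lie{h}_1,\dots,\lie{h}_{24}$ obtained by deleting $e^7$ from $\lie{n}_1,\dots,\lie{n}_{24}$ coincide, up to isomorphism, with Conti's list; feeding the corresponding half-flat pair $(\omega,\psi_-)$ into the displayed formula produces on each $\lie{n}_i=\lie{h}_i\oplus\R$ a closed $\phi$, hence a coclosed $\Gtwo$-structure.

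For the non-existence half I would invoke Proposition \ref{no-cocalibrated1} and Proposition \ref{no-cocalibrated2}, which rule out coclosed $\Gtwo$-structures on the eight algebras $\lie{g}_1,\dots,\lie{g}_8$ and on the three algebras $\lie{l}_1,\lie{l}_2,\lie{l}_3$. Since the obstructions there are stated for \emph{every} closed $4$-form, they exclude coclosed $\Gtwo$-structures with respect to any metric, not merely the orthogonally split ones, so no non-orthogonal construction can rescue these algebras. These eleven algebras are decomposable and hence lie in Gong's list. The argument then closes by verifying that the $24$ algebras $\lie{n}_i$ and the $11$ algebras $\lie{g}_s,\lie{l}_s$ are pairwise non-isomorphic and together exhaust the list of $35$; since $24+11=35$, no decomposable nilpotent Lie algebra is left unaccounted for.

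The main obstacle is the bookkeeping across two independent classifications: I must match the deleted-$e^7$ algebras $\lie{h}_1,\dots,\lie{h}_{24}$ against Conti's $24$ half-flat algebras and simultaneously confirm that $\{\lie{n}_i\}\cup\{\lie{g}_s\}\cup\{\lie{l}_s\}$ is exactly Gong's list of $35$ decomposable nilpotent Lie algebras, with the correct isomorphism types and no overlaps. The geometric input — building $\phi$ from a half-flat pair, and the two obstruction lemmas — is essentially already in place; the delicate point is the completeness and disjointness of the count, which is what upgrades the ``at least $24$'' remark preceding the theorem to the sharp ``exactly these $24$'' statement.
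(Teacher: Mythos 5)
Your proposal is correct and follows essentially the same route as the paper: the paper likewise excludes the eleven algebras via Propositions \ref{no-cocalibrated1} and \ref{no-cocalibrated2}, and then equips each of the twenty-four listed algebras $\lie{n}_s=\lie{h}_s\oplus\R e_7$ with the closed $4$-form $\phi_s=\tfrac12\omega_s^2+\psi_s^-\wedge dt$ coming from a half-flat $\mathrm{SU}(3)$-structure on $\lie{h}_s$ supplied by Conti's classification. The only difference is one of emphasis: you make explicit the bookkeeping (matching the $\lie{h}_s$ with Conti's list and verifying that the $24+11$ algebras exhaust Gong's $35$ without overlap), which the paper leaves implicit in its appeal to the two classifications.
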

\begin{proof} 
By Proposition \ref{no-cocalibrated1} and Proposition \ref{no-cocalibrated2}, we know 
that there are 11 decomposable nilpotent Lie algebras not admitting coclosed
$\Gtwo$-structures. This implies that there are at most $24$ decomposable nilpotent Lie algebras
having a  coclosed  $\Gtwo$-structure. 
But all the 24 Lie algebras listed in the statement have such a $\Gtwo$-structure.
It is clear on the abelian Lie algebra $\lie{n}_1=(0,0,0,0,0,0,0)$. 
Moreover, 
every non-abelian Lie algebra  $\lie{n}_s$, $s\in\left\{2,\dots,24\right\}$, listed in the statement,
is a direct sum of two ideals $\lie{h}_s$ and $\R$. In fact, $\lie{n}_s$ is an abelian extension of $\lie{h}_s$,
$$\lie{n}_s=\lie{h}_s\oplus\R e_7,$$
where $\lie{h}_s$ is a 6-dimensional Lie algebra, and $e_7=\frac{\partial}{\partial t}$ with $t$ a coordinate on $\R$.

By \cite{Conti:HalfFlat}, we know that  $\lie{h}_s$ has a half-flat $\mathrm{SU}(3)$-structure $(\omega_s,\psi_{s}^{-})$.
Thus, the 4-form $\phi_s$ on $\lie{n}_s$ given by
$$
\phi_s=\frac{1}{2}\omega_{s}^2+\psi_{s}^{-}\wedge dt,
$$
defines a  coclosed $\Gtwo$-structure on $\lie{n}_s$, which completes the proof.
\end{proof}

\begin{corollary}\label{cor:2-stepcoc}
Any decomposable 2-step nilpotent Lie algebra, of dimension seven, has a  coclosed $\Gtwo$-structure.
\end{corollary}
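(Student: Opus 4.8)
The plan is to read the result off Theorem \ref{decomposable-cocalibrated}, which already classifies the decomposable $7$-dimensional nilpotent Lie algebras carrying a coclosed $\Gtwo$-structure. The eleven algebras \emph{not} on that list are exactly those of Proposition \ref{no-cocalibrated1} and Proposition \ref{no-cocalibrated2}, so it suffices to check that none of them is $2$-step nilpotent; every $2$-step decomposable nilpotent Lie algebra of dimension seven would then fall among the $24$ listed in Theorem \ref{decomposable-cocalibrated}, and hence admit a coclosed $\Gtwo$-structure.

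To make the verification mechanical I would first translate the $2$-step condition into the structure-constant notation fixed above. Writing $V_1=\Span{e^j : de^j=0}$ for the span of the closed generators, which is precisely the annihilator of $[\lie{g},\lie{g}]$, a Lie algebra presented as $(de^1,\dots,de^7)$ is $2$-step nilpotent exactly when $[\lie{g},\lie{g}]\subseteq Z(\lie{g})$, and this is equivalent to the requirement $de^i\in\Lambda^2 V_1$ for every $i$. In words, each $de^i$ must be a linear combination of wedge products $e^{jk}$ in which both factors are themselves closed. Conversely, if some $de^i$ contains a generator $e^k$ with $de^k\neq 0$, then $e_k\in[\lie{g},\lie{g}]$ while admitting a nonzero bracket $[e_j,e_k]$ for some $j$, so $[\lie{g},[\lie{g},\lie{g}]]\neq 0$ and $\lie{g}$ is at least $3$-step.

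Then I would inspect the eleven excluded algebras directly. In each of $\lie{g}_1,\dots,\lie{g}_8$ and $\lie{l}_1,\lie{l}_2,\lie{l}_3$ at least one differential involves a non-closed generator: for instance $de^6=e^{15}$ in $\lie{g}_1$ uses $e^5$, which satisfies $de^5=e^{12}\neq 0$; $de^6=e^{14}$ in $\lie{g}_3$ uses the non-closed $e^4$; and $de^5=e^{13}-e^{24}$ in $\lie{l}_1$ uses $e^4$, again non-closed. By the criterion above each of the eleven is therefore at least $3$-step nilpotent, so none of them is $2$-step.

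There is no genuine obstacle here: once Theorem \ref{decomposable-cocalibrated} is available, the corollary is a short bookkeeping exercise. The only point demanding attention is to confirm uniformly across the eleven non-examples that some $de^i$ always contains a non-closed generator --- which is immediate from their explicit presentations --- so that the entire set of algebras without coclosed $\Gtwo$-structures is disjoint from the $2$-step case.
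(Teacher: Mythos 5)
Your proof is correct and is essentially the paper's own argument read from the complementary side: both reduce the corollary to Theorem \ref{decomposable-cocalibrated} plus Gong's classification, the only difference being that the paper positively names the seven 2-step algebras among the 24 positive examples (namely $\lie{n}_s$ with $s\in\{2,3,4,5,6,7,10\}$), whereas you check that none of the 11 excluded algebras of Propositions \ref{no-cocalibrated1} and \ref{no-cocalibrated2} is 2-step. One point you use implicitly: your inference ``$de^k\neq 0 \Rightarrow e_k\in[\lie{g},\lie{g}]$'' is valid only when the closed $1$-forms are exactly the span of the closed generators (equivalently, the nonzero differentials $de^i$ are linearly independent, so that $[\lie{g},\lie{g}]=\Span{e_k : de^k\neq 0}$), which does hold in all 11 presentations here but deserves a word, since in general a combination of non-closed generators can be closed.
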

\begin{proof}
Among the 35 decomposable nilpotent Lie algebras, of dimension seven, those that are 2-step nilpotent  
are the 7 Lie algebras $\lie{n}_s$, where $s\in\{2,3,4,5,6,7,10\}$, defined in Theorem \ref{decomposable-cocalibrated}. 
Thus, all of them carry  coclosed  $\Gtwo$-structures.
\end{proof}

\section{Indecomposable 2-step nilpotent case}\label{sec:sec5}
In this section we complete the classification of seven dimensional 2-step nilpotent Lie algebras which admit a coclosed  $\Gtwo$-structure. 
We have seen that there are exactly 7  decomposable Lie algebras of this type. 
In order to discuss the indecomposable 2-step nilpotent Lie algebras, we refer to Gong's classification \cite{Gong} (see also \cite{CF}). 
This list consists of 131 Lie algebras and 9 one-parameter families. None of the one-parameter families defines a 2-step nilpotent Lie algebra. 
Among the indecomposable Lie algebras (with no parameters)  there are only nine which are 2-step nilpotent Lie algebras. They are the following:
\begin{align*}
17&= \left(0,0,0,0,0,0,12+34+56\right),\\
27A&=\left(0,0,0,0,0,12,14+35\right),\\
27B&=\left(0,0,0,0,0,12+34,15+23\right),\\
37A&=\left(0,0,0,0,12, 23,24\right),\\
37B&=\left(0,0,0,0,12,23,34\right),\\
37B1&=\left(0,0,0,0,12-34,13+24,14\right),\\ 
37C&=\left(0,0,0,0,12+34,23,24\right),\\ 
37D&=\left(0,0,0,0,12+34,13,24\right),\\ 
37D1&=\left(0,0,0,0,12-34,13+24,14-23\right).\\
\end{align*}
\begin{theorem}\label{indecomposable-cocalibrated}
Up to isomorphism, the unique indecomposable 2-step nilpotent Lie algebras  carrying  a  coclosed  $\Gtwo$-structure are the following:
$17, 37A, 37B,  37B1,  37C,  37D$ and $37D1$.
\end{theorem}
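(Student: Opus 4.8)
The theorem asserts that among the nine indecomposable 2-step nilpotent Lie algebras listed, exactly seven ($17$, $37A$, $37B$, $37B1$, $37C$, $37D$, $37D1$) admit a coclosed $\Gtwo$-structure, while the remaining two ($27A$ and $27B$) do not. The plan is therefore to split the argument into a \emph{non-existence} half and an \emph{existence} half, mirroring the strategy already used in the decomposable case.

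For the non-existence half, I would show that $27A=(0,0,0,0,0,12,14+35)$ and $27B=(0,0,0,0,0,12+34,15+23)$ carry no coclosed $\Gtwo$-structure, using the obstructions proved earlier in Section~\ref{sec:sec3}. Concretely, for each of these two algebras I would write down the general closed $4$-form $\kappa$ (determined by solving $d\kappa=0$ on the Chevalley--Eilenberg complex, leaving the free coefficients $c_{ijkl}$), and then exhibit either a pair of vectors $X,Y$ with $(\iota_X\iota_Y\kappa)^2=0$ for every such $\kappa$ (invoking Lemma~\ref{obs3}), or a central vector $X$ for which $\pi_*(\iota_X\kappa)\notin\Lambda_-(\lie{h}^*)$ always (invoking Corollary~\ref{obs1}), or an argument via the $J$-invariant $4$-plane as in Proposition~\ref{no-cocalibrated2} using Lemma~\ref{lem2}. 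Since both $27A$ and $27B$ have a relatively large center (the first four generators are closed), there is ample room to choose such obstructing vectors; I expect the computation to be entirely analogous to the cases $s=1,\dots,8$ of Proposition~\ref{no-cocalibrated1}, and the main work is simply identifying, among the closed-form coefficients, which wedge products are forced to vanish.

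For the existence half, I would display for each of the seven remaining algebras an explicit positive $3$-form $\varphi$ (equivalently, its dual $4$-form $\phi$) in some coframe and verify directly that $d\phi=0$ while $g_\varphi$ is positive definite. The cleanest route is to exhibit a $g_\varphi$-orthonormal coframe $\{e^1,\dots,e^7\}$ in which $\varphi$ has the standard shape \eqref{eqn:3-forma G2} and $\phi$ the form \eqref{eqn:4-forma G2}, then change to the structure basis of the given Lie algebra and check closedness of $\phi$ by a finite Chevalley--Eilenberg computation; positivity is then automatic since $\varphi$ lies in the $\GL(7,\R)$-orbit characterized in Theorem~\ref{stable-7dim}. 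Because these algebras sit inside a $7$-dimensional space with a sizeable abelian part, each verification reduces to checking that a handful of structure-constant terms cancel.

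The main obstacle, as usual in such classification statements, is not conceptual but organizational: one must produce seven explicit coclosed forms and two airtight obstruction arguments, and keep the bookkeeping of coefficients under control. I expect the delicate cases to be $37B1$ and $37D1$, whose structure equations involve the coupled combinations $12-34$, $13+24$ and $14-23$, so that finding an \emph{orthonormal} coframe realizing \eqref{eqn:3-forma G2} requires a little care; and on the obstruction side, $27B$ (with its two nontrivial brackets $12+34$ and $15+23$) is the case where I would anticipate needing the full strength of Lemma~\ref{lem2} together with the $J_{\nu}$-invariant-subspace trick of Proposition~\ref{no-cocalibrated2}, rather than the simpler squaring obstruction of Lemma~\ref{obs3}.
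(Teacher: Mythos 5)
Your plan follows the paper's proof essentially verbatim: the paper proves existence by exhibiting, for each of the seven algebras $17, 37A, 37B, 37B1, 37C, 37D, 37D1$, an explicit closed $4$-form $\phi_i$ in a suitable coframe, and rules out $27A$ and $27B$ by writing down the generic closed $4$-form and applying the squaring obstruction of Lemma~\ref{obs3}. The only miscalibration is your expectation that $27B$ would require the $J_{\nu}$-invariant-subspace argument of Proposition~\ref{no-cocalibrated2} via Lemma~\ref{lem2}; in fact the simpler obstruction of Lemma~\ref{obs3} suffices for both excluded algebras, taking $X=e_6$ and $Y=e_7$ (the central vectors, so the center here is small rather than large) in each case.
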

\begin{proof}
We know that a $\Gtwo$-structure on a Lie algebra $\lie{g}$ can be defined either  by a 3-form or, equivalently, by a 4-form which have the expression
 given by \eqref{eqn:3-forma G2} or  \eqref{eqn:4-forma G2}, respectively with 
respect to some orthonormal coframe $\{e^1,\dotsc, e^7\}$ of ${\mathfrak{g}}^*$. For each Lie algebra appearing in the statement, 
an appropriate coframe and the corresponding 4-form are given as follows
\begin{itemize}
\item $17$: \,\,\,\, $\phi_{1}=e^{1234}+e^{1256}+e^{1367}+e^{1457}+e^{2357}-e^{2467}+e^{3456}$, \,\,\,\,  $\{e^1,e^2,e^3,e^4,e^5,e^6, e^7\}$;\\
\item$37A$: \,\,$\phi_{2}=e^{1234}+e^{1257}-e^{1356}-e^{1467}-e^{2367}+e^{2456}+e^{3457}$, \,\,\,\,  $\{e^3,e^1,e^2,e^4,e^5,e^6,e^7\}$;\\
\item$37B$: \,\,$\phi_{3}=e^{1234}+e^{1457}+e^{2357}+\frac{\sqrt{2}}{2}(e^{1256}-e^{1356}+e^{1267}+e^{1367}+e^{2456})\\
\qquad +\frac{\sqrt{2}}{2}(e^{3456}-e^{2467}+e^{3467})$, \qquad \qquad \qquad $\{e^1,\frac{\sqrt{2}}{2}(e^2-e^3),\frac{\sqrt{2}}{2}(e^2+e^3),e^4,e^5,
e^6,e^7\}$;\\
\item$37B1$:\,$\phi_{4}=e^{1234}+e^{1267}+e^{1357}+e^{1456}+e^{2356}-e^{2457}+e^{3467}$, \,\,\,\, $\{e^1,e^4,e^2,e^3,e^5,e^6,e^7\}$;\\
\item $37C$: \,\,$\phi_{5}=-e^{1234}-e^{1267}+e^{1357}+e^{1456}-e^{2356}+e^{2457}+e^{3467}$,\,\,\,\, $\{e^2,e^3,e^4,e^1,e^6,e^5,e^7\}$;\\
\item $37D$: \,\,$\phi_{6}=e^{1234}+e^{1267}+e^{3467}+\frac{\sqrt{2}}{2}(e^{1356}+e^{1357}+e^{1456}-e^{1457}+e^{2356})\\
-\frac{\sqrt{2}}{2}(e^{2357}-e^{2456}-e^{2457})$,\qquad \qquad \qquad $\{e^1,\frac{\sqrt{2}}{2}(e^3+e^4), e^2,\frac{\sqrt{2}}{2}(e^3-e^4),e^5,e^6, e^7\}$;\\
\item $37D1$:\,$\phi_{7}=-e^{1234}-e^{1267}-e^{1356}+e^{1457}-e^{2357}-e^{2456}+e^{3467}$,\,\, $\{e^1,e^3,e^2,e^4,e^5,-e^6,e^7\}$.
\end{itemize}
It is straightforward to verify that  each $\phi_{i}$ is closed on the corresponding Lie algebra.

It remains to prove that the Lie algebras $27A$ and $27B$ do not 
admit any  coclosed  $\Gtwo$-structure. 
To that effect, we show that for these Lie algebras the hypothesis of Lemma \ref{obs3} is satisfied for $X=e_6$ and $Y=e_7$.
In fact, let $\alpha$ be a generic closed 4-form on $27A$. Then, one can check that $\alpha$ has the following expression
\begin{equation*}
\begin{aligned}
\alpha=&c_{1234} e^{1234}+c_{1235} e^{1235}+c_{1236} e^{1236}+c_{1237} e^{1237}+c_{1245} e^{1245}+c_{1246} e^{1246}+c_{1247} e^{1247}\\
&+c_{1256} e^{1256}+c_{1257} e^{1257}+c_{1345} e^{1345}+c_{1346} e^{1346}+c_{1347} e^{1347}+c_{1356} e^{1356}\\
&+c_{1357} e^{1357}+c_{1367} e^{1367}+c_{1456} e^{1456}+c_{1457} e^{1457}+c_{1567} e^{1567}+c_{2345} e^{2345}\\
&+c_{2346} e^{2346}+c_{2347} e^{2347}+c_{2356} e^{2356}+c_{2357} e^{2357}+c_{2456} e^{2456}+c_{2457} e^{2457}\\
&+(c_{1247}-c_{2357}) e^{3456}+c_{3457} e^{3457},
\end{aligned}
\end{equation*}
where $c_{ijkl}$ are arbitrary real numbers. Thus, $\Big(\iota_{e_{6}}(\iota_{e_{7}}\alpha)\Big)^2=0$. 
A generic closed 4-form $\beta$ on  the Lie algebra $27B$ is expressed as
\begin{equation*}
\begin{aligned}
\beta=&c_{1234} e^{1234}+c_{1235} e^{1235}+c_{1236} e^{1236}+c_{1237} e^{1237}+c_{1245} e^{1245}+c_{1246} e^{1246}+c_{1247} e^{1247}\\
&+c_{1256} e^{1256}+c_{1257} e^{1257}+c_{1345} e^{1345}+c_{1346} e^{1346}+c_{1347} e^{1347}+c_{1356} e^{1356}\\
&+c_{1357} e^{1357}+c_{1367} e^{1367}+c_{1456} e^{1456}+c_{1457} e^{1457}+c_{2345} e^{2345}\\
&+c_{2346} e^{2346}+c_{2347} e^{2347}+c_{2356} e^{2356}+c_{2357} e^{2357}+c_{2456} e^{2456}+c_{2457} e^{2457}\\
&+(c_{1256}+c_{1457}-c_{2347}) e^{3456}+c_{3457} e^{3457},
\end{aligned}
\end{equation*}
where $c_{ijkl}$ are arbitrary real numbers. Hence, $\Big(\iota_{e_{6}}(\iota_{e_{7}}\beta)\Big)^2=0$. 
\end{proof}
\section{Coclosed $\Gtwo$-structures inducing nilsolitons} \label{sect-nilsoliton}

In this section we prove that any 2-step nilpotent Lie algebra, admitting a  coclosed  $\Gtwo$-structure, also has  a 
nilsoliton inner product determined by a   coclosed  $\Gtwo$-structure. This  result is not true for higher steps. 
Indeed, we give an example of a 
3-step nilpotent Lie algebra  supporting a  nilsoliton and  coclosed  $\Gtwo$-structures but none  coclosed  $\Gtwo$-structure
induces a nilsoliton.

Let $\lie{n}$ be a nilpotent Lie algebra. According to Lauret \cite{Lauret},
an inner product $g$ on $\lie{n}$  is  called \emph{nilsoliton} if its Ricci endomorphism 
$Ric(g)$ differs from a derivation $D$ of 
$\lie{n}$ by a scalar multiple of the identity map $I$, that is  
if there exists a real number $\lambda$ such  that  
\begin{equation} \label{condition-derivation} 
Ric(g)= \lambda I + D.
\end{equation}
Not all nilpotent Lie algebras admit nilsoliton inner products, but if a nilsoliton 
exists, then it is unique up to automorphism and scaling \cite{Lauret}.
\begin{theorem}\label{nilsoliton-indecomp} 
Any indecomposable 2-step nilpotent Lie algebra admitting a   coclosed  $\Gtwo$-structure,  also has a  coclosed $\Gtwo$-structure inducing  a nilsoliton.
\end{theorem}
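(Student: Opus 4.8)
The plan is to prove the statement by reducing to the finite list furnished by Theorem \ref{indecomposable-cocalibrated} and then treating each algebra by hand. By that theorem the indecomposable $2$-step nilpotent Lie algebras admitting a coclosed $\Gtwo$-structure are exactly $17,\,37A,\,37B,\,37B1,\,37C,\,37D$ and $37D1$, so it suffices to produce, for each of these seven, a single $\Gtwo$-form $\varphi$ that is simultaneously coclosed and whose induced metric $g_\varphi$ of \eqref{metric} is a nilsoliton in the sense of \eqref{condition-derivation}.

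For each algebra I would proceed in three steps. First, determine the nilsoliton inner product: since a nilsoliton is unique up to automorphism and scaling \cite{Lauret}, and since each of these algebras admits a \emph{nice} basis, I would search for a diagonal nilsoliton $g=\sum_i a_i^2\,(e^i)^2$ written in the structure-constant basis $\{e^i\}$. Computing the Ricci operator of such a diagonal metric on a $2$-step nilpotent Lie algebra yields a block endomorphism, negative definite on a complement of the centre and positive on the centre, whose entries are explicit quadratic expressions in the $a_i$; thus $Ric(g)=\lambda I+D$ reduces to a finite algebraic system that both confirms the existence of the nilsoliton and pins down the weights $a_i$ together with the diagonal derivation $D$ and the scalar $\lambda$. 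Second, I would rescale to an orthonormal coframe $\{f^i=a_i e^i\}$ for this metric. Third, I would exhibit in that coframe a positive $3$-form $\varphi$ of the model shape \eqref{eqn:3-forma G2}; because any such model form induces exactly the metric making $\{f^i\}$ orthonormal, namely the chosen nilsoliton, it then remains only to verify $d\,\star_{\varphi}\varphi=0$, a finite linear check against the structure equations.

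The main obstacle is the \emph{simultaneous} realization in the third step: the nilsoliton metric is rigid, so one cannot simply rescale a basis to turn the coclosed form of Theorem \ref{indecomposable-cocalibrated} into one inducing the nilsoliton. Once the nilsoliton metric is fixed, the admissible $\Gtwo$-forms sweep out the orbit $\SO(7)/\Gtwo$, an $8$-parameter family, and I must exhibit a coclosed representative inside it. I expect that for $17,\,37A,\,37B1,\,37C,\,37D1$ a model form in a suitably permuted orthonormal coframe closes directly, whereas for $37B$ and $37D$ a genuine rotation of the coframe (reflected in the $\tfrac{\sqrt{2}}{2}$ coefficients already appearing in Theorem \ref{indecomposable-cocalibrated}) is required to align the closed model $4$-form with the nilsoliton metric. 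Establishing that such an alignment exists, rather than merely some coclosed structure with a different induced metric, is the real content; everything else is the bookkeeping of the structure equations and of the Ricci tensor, which I would record algebra by algebra in a table listing the orthonormal coframe, the form $\varphi$, the induced metric $g_\varphi$, the derivation $D$ and the constant $\lambda$.
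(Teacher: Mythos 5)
Your proposal follows essentially the same route as the paper: reduce to the seven algebras of Theorem \ref{indecomposable-cocalibrated}, rescale each structure-constant basis so that the diagonal metric becomes the nilsoliton, and then exhibit a coclosed model $\Gtwo$-form \eqref{eqn:3-forma G2} in a permuted or rotated orthonormal coframe for that metric --- including your correct anticipation that $37B$ and $37D$ require genuine $\frac{\sqrt{2}}{2}$-rotations while the others close in a merely permuted coframe. The only difference is that the paper certifies the nilsoliton property by citing the classification of \cite{EF} rather than solving $Ric(g)=\lambda I+D$ directly, which is immaterial.
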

\begin{proof} 
By Theorem \ref{indecomposable-cocalibrated} we know that, up to isomorphism, the Lie algebras $17, 37A, 37B,\\
 37B1, 37C, 37D$ and  $37D1$ are the unique
indecomposable 2-step nilpotent admitting  coclosed  $\Gtwo$-structures. For each of these Lie algebras,
defined in Section \ref{sec:sec5} in terms of a basis $\{e^1,\cdots, e^7\}$ of the dual space, we consider a new basis $\{f^1,\dots,f^7\}$ 
and a coclosed  $\Gtwo$-structure defined by a 3-form $\varphi_{i}$ $(1\leq i\leq 7)$ which determines the 
inner product such that the basis $\{f^1,\dots,f^7\}$ is orthonormal.
The basis $\{f^j\}$, the $\Gtwo$-structure $\varphi_{i}$ and the appropriate coframe defining $\varphi_{i}$ are given as follows 
\begin{itemize}
\item $17$: \,\,\, $f^i=e^i,  \quad 1\leq i\leq 6, \qquad  f^7=\frac{\sqrt{6}}{6}\,e^7$. \\
Then, $17=\left (0,0,0,0, 0, 0,  \frac{\sqrt{6}}{6} f^{12} +\frac{\sqrt{6}}{6} f^{34}+ \frac{\sqrt{6}}{6} f^{56} \right)$, and\\
$\varphi_{1}=f^{127}+f^{347}+f^{567}+f^{135}-f^{146}-f^{236}-f^{245}$, \,\,\,\,  $\{f^1,f^2,f^3,f^4,f^5,f^6,f^7\}$;\\
\item$37A$: \,\, $f^i=e^i,  \quad 1\leq i\leq 4, \qquad  f^j=\frac{\sqrt{6}}{6}\,e^j,  \quad 5\leq j\leq 7$. \\
Then, $37A=\left(0,0,0,0, \frac{\sqrt{6}}{6} f^{12}, \frac{\sqrt{6}}{6} f^{23},  \frac{\sqrt{6}}{6}f^{24}\right)$, and\\
$\varphi_{2}=-f^{137}+f^{247}+f^{567}-f^{126}-f^{145}-f^{235}-f^{346}$, \,\,\,\,  $\{f^3,f^1,f^2,f^4,f^5,f^6,f^7\}$;\\
\item$37B$: \,\, $f^i=e^i,  \quad 1\leq i\leq 4, \qquad  f^5=\frac{\sqrt{5}}{5}\,e^5,\qquad  f^6=\frac{\sqrt{6}}{6}\,e^6, \qquad  f^7=\frac{\sqrt{5}}{5}\,e^7$. \\
Thus, $37B=\left(0,0,0,0, \frac{\sqrt{5}}{5} f^{12}, \frac{\sqrt{10}}{10} f^{23}, \frac{\sqrt{5}}{5}f^{34}\right)$, and\\
$\varphi_{3}=-f^{146}-f^{236}+f^{567}+ \frac{\sqrt{2}}{2}(f^{123}+f^{125}-f^{135}-f^{137}-f^{245})\\
+ \frac{\sqrt{2}}{2}(f^{247}+f^{345}+f^{347})$,  \,\,\,\,\qquad\qquad   $\{f^1,\frac{\sqrt{2}}{2}(f^2-f^3),\frac{\sqrt{2}}{2}(f^2+f^3),f^4,f^5,f^6,f^7\}$;\\
\item$37B1$: \, $f^i=e^i,  \quad 1\leq i\leq 4, \qquad  f^j=\frac{\sqrt{10}}{10}\,e^j, \quad 5\leq j\leq 7$. \\
So, $37B1=\left(0,0,0,0,\frac{\sqrt{10}}{10}(f^{12} - f^{34}), \frac{\sqrt{10}}{10}(f^{13} + f^{24}),  \frac{\sqrt{10}}{10} f^{14}\right)$, and\\
$\varphi_{4}=f^{125}+f^{345}+f^{567}+ \frac{\sqrt{2}}{2}(-f^{136}+f^{137}+f^{146}+f^{147}+f^{236})\\
+ \frac{\sqrt{2}}{2}(f^{237}+f^{246}-f^{247})$,  \,\,\,\,\qquad\qquad   $\{f^1,\frac{\sqrt{2}}{2}(f^3+f^4), f^2,\frac{\sqrt{2}}{2}(f^3-f^4),f^5,f^6, f^7\}$;\\
\item $37C$: \,\, $f^i=e^i,  \quad 1\leq i\leq 4, \qquad  f^j=\frac{\sqrt{2}}{4}\,e^j, \quad 5\leq j\leq 7$. \\
Thus, $37C=\left(0,0,0,0, \frac{\sqrt{2}}{4}(f^{12} +f^{34}), \frac{\sqrt{2}}{4} f^{23}, \frac{\sqrt{2}}{4} f^{24}\right)$, and\\
$\varphi_{5}=f^{147}+f^{237}+f^{567}+f^{125}-f^{136}+f^{246}+f^{345}$, \,\,\,\,  $\{f^1,f^4,f^2,f^3,f^5,f^6,f^7\}$;\\
\item $37D$: \,\, $f^i=e^i,  \quad 1\leq i\leq 4, \qquad  f^j=\frac{\sqrt{3}}{6}\,e^j, \quad 5\leq j\leq 7$. \\
Then, $37D=\left(0,0,0,0, \frac{\sqrt{3}}{6}(f^{12} +f^{34}, \frac{\sqrt{6}}{6} f^{13}, \frac{\sqrt{6}}{6} f^{24}\right)$, and\\
$\varphi_{6}=-f^{147}+f^{237}-f^{567}+f^{125}+f^{136}+f^{246}-f^{345}$, \,\,\,\,  $\{f^2,f^3,f^4,f^1,f^6,f^5,f^7\}$;\\
\item $37D1$:\,\, $f^i=e^i,  \quad 1\leq i\leq 4, \qquad  f^j=\frac{\sqrt{3}}{6}\,e^j, \quad 5\leq j\leq 7$. \\
Hence, $37D1=\left(0,0,0,0, \frac{\sqrt{3}}{6}(f^{12} - f^{34}, \frac{\sqrt{6}}{6}(f^{13}+f^{24}), \frac{\sqrt{6}}{6}(f^{14}-f^{23}\right)$, and\\
$\varphi_{7}=f^{137}+f^{247}-f^{567}+f^{125}+f^{146}-f^{236}-f^{345}$, \,\,\,\,  $\{f^1,f^3,f^2,f^4,f^5,-f^6,f^7\}$.
\end{itemize}
It is straightforward to check that each $\varphi_{i}$ is coclosed and induces the inner product $g_{i} = \sum_{j = 1}^7 (f^j)^2$. By \cite{EF}, each  $g_{i}$
is a nilsoliton on the corresponding Lie algebra, which completes the proof.
\end{proof}
\begin{remark}
Note that for each of the Lie algebras $17-37D1$ considered in the previous theorem, the change of basis
from $\{e^i\}$ to $\{f^i\}$, given in the proof of Theorem \ref{nilsoliton-indecomp}, defines an automorphism of the Lie algebra,
but it is not an isomorphism between  the $\Gtwo$-structure defined in the proof of Theorem \ref{nilsoliton-indecomp}
and the  $\Gtwo$-structure defined in the proof of Theorem \ref{indecomposable-cocalibrated}. In fact, these structures define 
different metric. 
\end{remark}

\begin{theorem}\label{nilsoliton-decomposable}
Any decomposable 2-step nilpotent Lie algebra has a  coclosed  $\Gtwo$-structure inducing a nilsoliton.
\end{theorem}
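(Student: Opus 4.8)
The plan is to proceed exactly as in the proof of Theorem~\ref{nilsoliton-indecomp}. By Corollary~\ref{cor:2-stepcoc} the decomposable $2$-step nilpotent Lie algebras of dimension seven are precisely the seven algebras $\lie{n}_s$ with $s\in\{2,3,4,5,6,7,10\}$ of Theorem~\ref{decomposable-cocalibrated}, and each of them is an orthogonal direct sum $\lie{n}_s=\lie{h}_s\oplus\R e_7$ with $e_7$ central. For every such $s$ I would start, as in the construction recalled before Theorem~\ref{decomposable-cocalibrated}, from a half-flat $\mathrm{SU}(3)$-structure $(\omega_s,\psi_s^-)$ on the six-dimensional factor $\lie{h}_s$; the four-form $\phi_s=\tfrac12\omega_s^2+\psi_s^-\wedge f^7$ then defines a coclosed $\Gtwo$-structure on $\lie{n}_s$ whose associated metric is the orthogonal product $g_s=h_s+(f^7)^2$, where $h_s$ is the metric of $(\omega_s,\psi_s^-)$ and $f^7$ is a unit form annihilating $\lie{h}_s$.

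The first step is the observation that, because $\R e_7$ is a central abelian summand, $g_s$ is a nilsoliton on $\lie{n}_s$ as soon as $h_s$ is a nilsoliton on $\lie{h}_s$. Indeed, the Ricci operator of the product metric is $\ric(g_s)=\ric(h_s)\oplus 0$, the flat factor contributing nothing; so if $\ric(h_s)=\lambda I+D$ with $D\in\mathrm{Der}(\lie{h}_s)$, then extending $D$ by $-\lambda\,\mathrm{Id}$ on $\R e_7$ produces a derivation $\widetilde D$ of $\lie{n}_s$ (the extra bracket relations are trivial, since $e_7$ is central) and $\ric(g_s)=\lambda I+\widetilde D$. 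Note that the length of $f^7$ plays no role here, so one is free to rescale the $\R e_7$ direction. This reduces the problem to choosing the half-flat structure on each $\lie{h}_s$ so that its underlying metric is the nilsoliton of $\lie{h}_s$, which is unique up to automorphism and scaling by \cite{Lauret}.

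Concretely I would, for each of the seven algebras, fix a rescaled basis $\{f^1,\dots,f^7\}$ of $\lie{n}_s^*$ for which the nilsoliton inner product---read off from \cite{EF}---equals $\sum_j (f^j)^2$, rewrite the structure equations in this basis, and then write down a positive $3$-form $\varphi_s$ of the normal form \eqref{eqn:3-forma G2} relative to $\{f^1,\dots,f^7\}$. A direct Chevalley--Eilenberg computation then checks $d\star_{\varphi_s}\varphi_s=0$, and by construction $g_{\varphi_s}=\sum_j (f^j)^2$ is the prescribed nilsoliton, completing the proof case by case exactly as in Theorem~\ref{nilsoliton-indecomp}.

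I expect the genuine difficulty to lie in the compatibility between the nilsoliton and the $\Gtwo$-structure rather than in any single computation. The nilsoliton metric prescribes definite ratios among the lengths of the generators of $\lie{h}_s$---this is the origin of the normalising constants $\tfrac{\sqrt6}{6},\tfrac{\sqrt5}{5},\dots$ appearing in Theorem~\ref{nilsoliton-indecomp}---and one must exhibit, for each $\lie{h}_s$, an actual half-flat $\mathrm{SU}(3)$-structure (equivalently a positive coclosed $3$-form) realising exactly those ratios; that these six-dimensional factors do carry half-flat structures is guaranteed by \cite{Conti:HalfFlat}, but matching the metric is the point requiring care. Once the scaled structure constants are fixed, coclosedness is only a finite linear verification, so beyond selecting the correct rescaling for each of the seven algebras no essential obstacle remains.
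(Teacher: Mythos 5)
Your strategy coincides with the paper's own: reduce, via Corollary~\ref{cor:2-stepcoc}, to the seven algebras $\lie{n}_s$, $s\in\{2,3,4,5,6,7,10\}$ of Theorem~\ref{decomposable-cocalibrated}, and exploit the orthogonal splitting $\lie{n}_s=\lie{h}_s\oplus\R e_7$, under which a coclosed $\Gtwo$-structure inducing a product metric is the same thing as a half-flat $\mathrm{SU}(3)$-structure on $\lie{h}_s$. Your preliminary step is also correct: since $e_7$ is central and orthogonal to $[\lie{n}_s,\lie{n}_s]$, one has $\ric(g_s)=\ric(h_s)\oplus 0$, and extending the derivation $D$ by $-\lambda\,\id$ on $\R e_7$ shows that a nilsoliton on $\lie{h}_s$ extends to a nilsoliton on $\lie{n}_s$; this is what the paper uses implicitly when it invokes \cite{Will} (which, incidentally, is the correct reference here --- the six-dimensional nilsoliton metrics come from Will's classification, whereas \cite{EF} is what the paper uses for the indecomposable seven-dimensional case).

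The genuine gap is that you never carry out the step you yourself identify as "the point requiring care": exhibiting, for each of the seven algebras, a half-flat structure (equivalently, a coclosed positive $3$-form) whose underlying metric \emph{is} the nilsoliton. This existence is precisely the content of the theorem and cannot be waved through. It does not follow from \cite{Conti:HalfFlat}, which produces on each $\lie{h}_s$ a half-flat structure for \emph{some} metric, with no control over which one; and it does not follow from the separate existence of a nilsoliton and of coclosed $\Gtwo$-structures, as the paper's own Example~\ref{example:1} shows: the $3$-step algebra $\lie{n}_8$ carries both, yet no coclosed $\Gtwo$-structure on it induces the nilsoliton. The paper's proof consists exactly of the missing verification: for each algebra it writes down an explicit orthonormal coframe for the nilsoliton metric (in fact no rescaling is needed in the decomposable case --- the structure constants of Theorem~\ref{decomposable-cocalibrated} already make $\sum_{i=1}^7(e^i)^2$ a nilsoliton, by \cite{Will}, so the normalising constants $\tfrac{\sqrt6}{6},\tfrac{\sqrt5}{5},\dots$ you anticipate do not occur here) together with a specific permutation, sign change or rotation of that coframe in which the normal form \eqref{eqn:3-forma G2} becomes coclosed. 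A $3$-form in normal form relative to an orthonormal coframe automatically induces the right metric, but is coclosed only for carefully chosen coframes; producing those seven choices and checking $d\phi=0$ is the substance of the proof, and without it your argument is a correct reduction plus a plan, not a proof.
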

\begin{proof}
By  Corollary \ref{cor:2-stepcoc}, any  decomposable $2$-step nilpotent  Lie algebra admitting a coclosed  $\Gtwo$-structure  is  isomorphic to one of the  following  seven  Lie algebras  ${\frak n}_s$, with  $s\in\{2,3,4,5,6,7,10\}$, defined in Theorem \ref{decomposable-cocalibrated}.
For each Lie algebra $ {\frak k}_i$  consider
For each of these Lie algebras, we consider the $G_2$ form $\varphi$, and  the appropriate coframe defining $\varphi$, given by
\begin{itemize}
\item $\lie{n}_2$:\; $\varphi= -  e^{137} -  e^{247}-e^{567} + e^{126}+  e^{145}  - e^{235} - e^{346}$, \; $\{e^1,e^3,e^2,e^4,e^6,-e^5,-e^7\}$;\\
\item $\lie{n}_3$:\; $\varphi=  -  e^{127} +  e^{347} -e^{567} +  e^{135}  + e^{146} -   e^{236} + e^{245}$, \; $\{e^1,e^2,e^3,-e^4,e^5,e^6,-e^7\}$;\\
\item $\lie{n}_4$:\; $\varphi= e^{147} + e^{237}+e^{567}- e^{126} - e^{135}+ e^{245}- e^{346}$, \,\; $\{e^1,e^4,e^2,e^3,-e^6,e^5,e^7\}$;\\
\item $\lie{n}_5$:\; $\varphi= \frac{\sqrt{2}}{2} (e^{125} +  e^{126}  +  e^{135}  -   e^{136}  - e^{245} +  e^{345}  + e^{346}  +  e^{246}) + e^{147}  + e^{237} +  e^{567}$,
\begin{flushright}
$\{e^1,e^2,\frac{\sqrt{2}}{2}(e^5+e^6),e^3,\frac{\sqrt{2}}{2}(e^5-e^6),e^4,e^7\}$;
\end{flushright}
\item $\lie{n}_6$:\; $\varphi=e^{127}+e^{347}+e^{567}-e^{235} +e^{246}-e^{136}-e^{145}$, \,\; $\{e^1,e^2,e^6,-e^5,e^3,e^4,e^7\}$;\\
\item $\lie{n}_7$:\; $\varphi= e^{137} + e^{247} - e^{567} - e^{125}- e^{146}+ e^{236}+ e^{345}$, \,\; $\{e^1,e^3,e^2,e^4,-e^5,e^6,e^7\}$;\\
\item $\lie{n}_{10}$: $\varphi= - e^{147} - e^{257}- e^{367}  + e^{126}- e^{135} + e^{234} - e^{456}$, \; $\{e^1,e^4,e^3,e^6,-e^5,e^2,-e^7\}$.\\
\end{itemize}

It is straightforward to check that each form $\varphi$ is coclosed on the corresponding Lie algebra, and it defines the metric 
$g=\sum_{i=1}^7 (e^i)^2$, which is a nilsoliton as it is proved in \cite{Will}. 
\end{proof}

Finally, we show an example of a $3$-step nilpotent Lie algebra admitting  coclosed $\Gtwo$-structures and a nilsoliton which is not determined 
by any  coclosed  $\Gtwo$-structure.

\begin{example}\label{example:1}
 Consider the $3$-step nilpotent decomposable Lie algebra $\lie{n}_8$ defined in Theorem \ref{decomposable-cocalibrated}
as $\lie{n}_8=(0,0,0,0,12,14+25,0)$, with respect to a basis $\{e^1,\ldots, e^7\}$ of the dual space $\lie{n}_{8}^*$.  Clearly,
in the basis $\{f^1=e^1,f^2=\frac{\sqrt{2}}{2}\,e^2, f^3=e^3, \ldots, f^7=e^7\}$,
 the Lie algebra $\lie{n}_8$ is defined by the 
structure equations $$(0,0,0,0,\sqrt{2}f^{12},f^{14}+\sqrt{2}f^{25},0).$$ 
By \cite{Culma}, the inner product $$g = \sum_{i = 1}^7  (f^i)^2$$ is a nilsoliton on $\lie{n}_8$, and 
so it is unique up to scaling and  automorphism of the Lie algebra \cite{Lauret}. 

On the other hand, by Theorem \ref{decomposable-cocalibrated}, $\lie{n}_8$ admits a coclosed $\Gtwo$-structure.
However, we will show that the Lie algebra $\lie{n}_8$ does not carry any coclosed $\Gtwo$-structure inducing the nilsoliton $g$. For this, we consider 
the $\Gtwo$-structure defined by the 3-form 
$$\varphi_0=e^{127}+e^{347}+e^{567}+e^{135}-e^{146}-e^{236}-e^{245}.$$
Denote by $\star$ the Hodge operator defined by $\varphi_0$.  By using the formula $(3.6)$ in  \cite{Bryant}, we know that every positive three-form $\varphi$ on $\lie{n}_8$, compatible with the previous metric $g$ and orientation, can be written as 
$$\varphi=(a^2-|\alpha|^2)\varphi_0+2a*(\alpha\wedge\varphi_0)+i(\alpha\circ\alpha),$$
where $a$ is a constant, $\alpha$  is a one-form satisfying $a^2+|\alpha|^2=1$, and $i(\alpha \circ \alpha)$  is a three-form depending on $\alpha$ and $\varphi_0$. Consequently
$$*\varphi=(a^2-|\alpha|^2)*\varphi_0+2a(\alpha\wedge\varphi_0)+*i(\alpha\circ\alpha).$$
Consider a generic one form  $\alpha=\sum_{j=1}^7 \alpha_j e^j$, with  $\alpha_j\in\R$. Since     $$i(\alpha\circ\alpha)=  \sum_{1\leq j,k\leq 7}\alpha_j\alpha_ke^j\wedge(\iota_{e_k}\varphi_0),$$  we obtain the following  explicit expression  for  $*\varphi$
\begin{align*}
&*\varphi=
(a^2-|\alpha|^2)e^{3456}+(2a\alpha_{3}+\alpha_{1}\alpha_{5}-\alpha_{2}\alpha_{6}-\alpha_{4}\alpha_{7})e^{3567}+
\\&
(2a\alpha_{1}-\alpha_{2}\alpha_{7}+\alpha_{3}\alpha_{5}-\alpha_{4}\alpha_{6})e^{1347}+(a^2-|\alpha|^2)e^{1256}+(a^2-|\alpha|^2)e^{2357}+
\\&
(2a\alpha_{2}+\alpha_{1}\alpha_{7}-\alpha_{3}\alpha_{6}-\alpha_{4}\alpha_{5})e^{2347}+(2a\alpha_{2}+\alpha_{1}\alpha_{7}+\alpha_{3}\alpha_{6}+\alpha_{4}\alpha_{5})e^{2567}+
\\&
(a^2-|\alpha|^2)e^{1367}+(2a\alpha_{4}-\alpha_{1}\alpha_{6}-\alpha_{2}\alpha_{5}+\alpha_{3}\alpha_{7})e^{4567}+(a^2-|\alpha|^2)e^{1457}+
\\&
(2a\alpha_{1}-\alpha_{2}\alpha_{7}-\alpha_{3}\alpha_{5}+\alpha_{4}\alpha_{6})e^{1567}+(-a^2+|\alpha|^2)e^{2467}+(a^2-|\alpha|^2)e^{1234}+
\\&
(2-a\alpha_{1}-\alpha_{2}\alpha_{7}-\alpha_{3}\alpha_{5}-\alpha_{4}\alpha_{6})e^{1236}+(2-a\alpha_{1}-\alpha_{2}\alpha_{7}+\alpha_{3}\alpha_{5}+\alpha_{4}\alpha_{6})e^{1246}+
\\&
(2-a\alpha_{2}+\alpha_{1}\alpha_{7}+\alpha_{3}\alpha_{6}-\alpha_{4}\alpha_{5})e^{1235}+(2a\alpha_{3}-\alpha_{1}\alpha_{5}+\alpha_{2}\alpha_{6}-\alpha_{4}\alpha_{7})e^{1237}+
\\&
(2a\alpha_{2}-\alpha_{1}\alpha_{7}+\alpha_{3}\alpha_{6}-\alpha_{4}\alpha_{5}e^{1246}+(2a\alpha_{4}+\alpha_{1}\alpha_{6}+\alpha_{2}\alpha_{5}+\alpha_{3}\alpha_{7})e^{1247}+
\\&
(2a\alpha_{5}+\alpha_{1}\alpha_{3}-\alpha_{2}\alpha_{4}-\alpha_{6}\alpha_{7})e^{1257}+(2a\alpha_{6}-\alpha_{1}\alpha_{4}-\alpha_{2}\alpha_{3}+\alpha_{5}\alpha_{7})e^{1267}+
\\&
(2a\alpha_{4}+\alpha_{1}\alpha_{6}-\alpha_{2}\alpha_{5}-\alpha_{3}\alpha_{7}e^{1345}+(2a\alpha_{3}-\alpha_{1}\alpha_{5}-\alpha_{2}\alpha_{6}+\alpha_{4}\alpha_{7})e^{1346}+
\\&
(-2a\alpha_{6}+\alpha_{1}\alpha_{4}-\alpha_{2}\alpha_{3}+\alpha_{5}\alpha_{7})e^{1356}(-2a\alpha_{7}-\alpha_{1}\alpha_{2}-\alpha_{3}\alpha_{4}-\alpha_{5}\alpha_{6})e^{1357}+
\\&
(-2a\alpha_{5}-\alpha_{1}\alpha_{3}-\alpha_{2}\alpha_{4}-\alpha_{6}\alpha_{7})e^{1456}+(2a\alpha_{7}+\alpha_{1}\alpha_{2}-\alpha_{3}\alpha_{4}-\alpha_{5}\alpha_{6})e^{1467}+
\\&
(2a\alpha_{3}+\alpha_{1}\alpha_{5}+\alpha_{2}\alpha_{6}+\alpha_{4}\alpha_{7})e^{2345}+(-2a\alpha_{4}+\alpha_{1}\alpha_{6}-\alpha_{2}\alpha_{5}+\alpha_{3}\alpha_{7})e^{2346}+
\\&
(-2a\alpha_{5}+\alpha_{1}\alpha_{3}+\alpha_{2}\alpha_{4}-\alpha_{6}\alpha_{7})e^{2356}+(2a\alpha_{7}-\alpha_{1}\alpha_{2}+\alpha_{3}\alpha_{4}-\alpha_{5}\alpha_{6})e^{2367}+
\\&
(2a\alpha_{6}+\alpha_{1}\alpha_{4}-\alpha_{2}\alpha_{3}-\alpha_{5}\alpha_{7})e^{2456}+(2a\alpha_{7}-\alpha_{1}\alpha_{2}-\alpha_{3}\alpha_{4}+\alpha_{5}\alpha_{6})e^{2457}+
\\&
(2a\alpha_{5}-\alpha_{1}\alpha_{3}+\alpha_{2}\alpha_{4}-\alpha_{6}\alpha_{7})e^{3457}+(2a\alpha_{6}+\alpha_{1}\alpha_{4}+\alpha_{2}\alpha_{3}+\alpha_{5}\alpha_{7})e^{3467}.
\end{align*}
Therefore
\begin{align*}
&d(*\varphi)=
-(a^2-|\alpha|^2)\sqrt{2}e^{12357}-(2a\alpha_{4}-\alpha_{1}\alpha_{6}-\alpha_{2}\alpha_{5}+\alpha_{3}\alpha_{7})\sqrt{2}e^{12467}+
\\&
(-2a\alpha_{7}\sqrt{2}-\sqrt{2}\alpha_{1}\alpha_{2}+\sqrt{2}\alpha_{3}\alpha_{4}+\sqrt{2}\alpha_{5}\alpha_{6}-2a\alpha_{2}-\alpha_{1}\alpha_{7}-\alpha_{3}\alpha_{6}-\alpha_{4}\alpha_{5})e^{12457}+
\\&
(-2a\alpha_{3}\sqrt{2}+\sqrt{2}\alpha_{1}\alpha_{5}+\sqrt{2}\alpha_{2}\alpha_{6}-\sqrt{2}\alpha_{4}\alpha_{7}+2a\alpha_{5}-\alpha_{1}\alpha_{3}-\alpha_{2}\alpha_{4}+\alpha_{6}\alpha_{7})e^{12345}+
\\&
-(2a\alpha_{3}+\alpha_{1}\alpha_{5}-\alpha_{2}\alpha_{6}-\alpha_{4}\alpha_{7})\sqrt{2}e^{12367}+(a^2-|\alpha|^2)\sqrt{2}e^{12346}+
\\&
(2a\alpha_{5}\sqrt{2}-\sqrt{2}\alpha_{1}\alpha_{3}+\sqrt{2}\alpha_{2}\alpha_{4}-\sqrt{2}\alpha_{6}\alpha_{7}+2a\alpha_{7}-\alpha_{1}\alpha_{2}+\alpha_{3}\alpha_{4}-\alpha_{5}\alpha_{6})e^{12347}+
\\&
(2a\alpha_{6}\sqrt{2}+\sqrt{2}\alpha_{1}\alpha_{4}+\sqrt{2}\alpha_{2}\alpha_{3}+\sqrt{2}\alpha_{5}\alpha_{7})e^{23457}+(-2a\alpha_{3}-\alpha_{1}\alpha_{5}+\alpha_{2}\alpha_{6}+\alpha_{4}\alpha_{7})e^{13457}.
\end{align*}
Then we see that $d(*\varphi)=0$ is equivalent to the  following  system  
$$
\begin{array}{l}
a^2-|\alpha|^2=0,  \quad a\alpha_{4}-\alpha_{1}\alpha_{6}-\alpha_{2}\alpha_{5}+\alpha_{3}\alpha_{7} =0, \\[2pt]
2a\alpha_{3}+\alpha_{1}\alpha_{5}-\alpha_{2}\alpha_{6}-\alpha_{4}\alpha_{7} =0,\\[2pt]
2a\alpha_{6}\sqrt{2}+\sqrt{2}\alpha_{1}\alpha_{4}+\sqrt{2}\alpha_{2}\alpha_{3}+\sqrt{2}\alpha_{5}\alpha_{7}=0,\\[2pt]
 -2a\alpha_{3}\sqrt{2}+\sqrt{2}\alpha_{1}\alpha_{5}+\sqrt{2}\alpha_{2}\alpha_{6}-\sqrt{2}\alpha_{4}\alpha_{7}+2a\alpha_{5}-\alpha_{1}\alpha_{3}-\alpha_{2}\alpha_{4}+\alpha_{6}\alpha_{7}=0, \\[2pt]
2a\alpha_{5}\sqrt{2}-\sqrt{2}\alpha_{1}\alpha_{3}+\sqrt{2}\alpha_{2}\alpha_{4}-\sqrt{2}\alpha_{6}\alpha_{7}+2a\alpha_{7}-\alpha_{1}\alpha_{2}+\alpha_{3}\alpha_{4}-\alpha_{5}\alpha_{6}=0,\\[2pt]
-2a\alpha_{7}\sqrt{2}-\sqrt{2}\alpha_{1}\alpha_{2}+\sqrt{2}\alpha_{3}\alpha_{4}+\sqrt{2}\alpha_{5}\alpha_{6}-2a\alpha_{2}-\alpha_{1}\alpha_{7}-\alpha_{3}\alpha_{6}-\alpha_{4}\alpha_{5} =0,
\end{array}
$$
in the variables $a, \alpha_j$, $j = 1, \ldots, 7$. We can show that the above system has no solution. In fact, from the first equation it follows that $a^2=|\alpha|^2$, so, it is not restrictive to   suppose that  $2a=1$.  By the second, third and fourth equations we obtain
$$
\alpha_4=\alpha_1\alpha_6+\alpha_2\alpha_5-\alpha_3\alpha_7, \, 
\alpha_3=\frac{-\alpha_1\alpha_6\alpha_7-\alpha_2\alpha_5\alpha_7+\alpha_1\alpha_5-\alpha_2\alpha_6}{1+\alpha_7^2}, \, 
\alpha_6=-\alpha_5\alpha_7.
$$
By   the last  equation $\left(d(*\varphi)\right)_{12457}=0$ we get  $\alpha_5^2 = - 1$ and  so  $\lie{n}_8$ does not admit any  coclosed $\G-$structure inducing the nilsoliton.
\end{example}

\section{Contact  metric structures induced by coclosed $\Gtwo$-structures}\label{contact}

In general, the existence of a coclosed  $\Gtwo$-structure is independent of the existence of a contact form. 
We recall that   a contact metric structure on a Riemannian manifold  $(M,g)$ is a
unit length vector field $\xi$ such that the endomorphism  $\phi$ defined by $g(\phi  \cdot , \cdot )  = \frac 12  d \eta (\cdot, \cdot)$ and the $1$-form $\eta = \langle  \xi, \cdot \rangle$  are related by
$$
\phi^2 =  - {\rm Id} + \eta \otimes \xi.
$$
A contact metric structure $(M, g, \xi, \phi)$ is called  K-contact if   the Reeb vector field $\xi$  is Killing and it  is said to be normal if the induced almost complex structure J on the product manifold  $M \times \R$ is integrable.  A Sasakian manifold.
is a  normal contact metric manifold.

 A natural  question is whether on a $7$-manifold  there exists a contact metric structure such that the  metric is induced by a  coclosed  $\Gtwo$-structure.   For 2-step nilmanifolds  we can show the following 

\begin{proposition} \label{prop:contact}
If a  $2$-step nilmanifold  $\Gamma \backslash G$  admits an invariant  contact metric structure  $(g,  \xi, \phi)$ such that   the metric $g$  is induced by a  coclosed  $\Gtwo$-structure, then $G$  is isomorphic to the  $7$-dimensional  Heisenberg Lie group.  Moreover,  $\Gamma \backslash G$   has an invariant  Sasakian structure.
\end{proposition}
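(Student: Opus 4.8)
The plan is to reduce the statement to the existence of an invariant contact form on the Lie algebra $\lie{g}$ of $G$ and then to exploit that $\lie{g}$ is $2$-step nilpotent; invariance lets me work entirely with left-invariant forms on $\lie{g}$. First I would record that any contact metric structure $(g,\xi,\phi)$ has $\eta=g(\xi,\cdot)$ as a genuine contact form. Indeed, $\phi^2=-\mathrm{Id}+\eta\otimes\xi$ forces $\phi^2=-\mathrm{Id}$ on $\ker\eta$, so $\phi$ restricts to an almost complex structure there; since $g$ is nondegenerate, the identity $g(\phi\cdot,\cdot)=\tfrac12 d\eta$ then makes $d\eta$ nondegenerate on the six-dimensional space $\ker\eta$. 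Thus $d\eta$ has rank $6$ and $\eta\wedge(d\eta)^3\neq0$, and its one-dimensional kernel is spanned by the Reeb field $\xi$.

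Next I would use the $2$-step condition to bound the rank of any exact $2$-form. Since $[\lie{g},\lie{g}]\subseteq\lie{z}$, the Chevalley--Eilenberg formula $d\alpha(X,Y)=-\alpha([X,Y])$ gives $\iota_Z\,d\alpha=0$ for every $Z\in\lie{z}$ and every $\alpha\in\lie{g}^*$; hence every exact $2$-form lies in $\Lambda^2(\lie{g}/\lie{z})^*$ and has rank at most $\dim(\lie{g}/\lie{z})=7-\dim\lie{z}$. The rank-$6$ requirement on $d\eta$ then forces $\dim\lie{z}=1$, so the Reeb direction $\xi$ must span the center, and since $\lie{g}$ is genuinely $2$-step we get $[\lie{g},\lie{g}]=\lie{z}$. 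Choosing a basis $\{e_1,\dots,e_7\}$ with $e_7$ spanning $\lie{z}$, the structure equations become $de^i=0$ for $i\leq6$ and $de^7=\omega_0\in\Lambda^2\Span{e^1,\dots,e^6}$. Writing $\eta=\sum_i a_i e^i$ yields $d\eta=a_7\,\omega_0$, so rank $6$ holds precisely when $a_7\neq0$ and $\omega_0^3\neq0$, i.e. $\omega_0$ is symplectic on $\Span{e^1,\dots,e^6}$. A $2$-step nilpotent Lie algebra with one-dimensional center whose single structure $2$-form is symplectic is, by the Darboux normal form for $\omega_0$, isomorphic to the $7$-dimensional Heisenberg algebra $17=(0,0,0,0,0,0,12+34+56)$; hence $G$ is the Heisenberg group. (Alternatively, by Corollary \ref{cor:2-stepcoc} and Theorem \ref{indecomposable-cocalibrated} the relevant Lie algebras form an explicit finite list, and for every member except $17$ the forms $de^j$ lie in $\Lambda^2$ of a subspace of dimension at most $4$, forcing $\mathrm{rank}(d\eta)\leq4$, whereas on $17$ the form $de^7=e^{12}+e^{34}+e^{56}$ already has rank $6$.)

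Finally, for the \emph{moreover} part I would exhibit the Sasakian structure and check compatibility with a coclosed $\Gtwo$-metric. The delicate point is the normalization, so that the $\Gtwo$-metric is an \emph{associated} metric of the contact form. Taking the orthonormal coframe $f^i=e^i$ for $i\leq6$ and $f^7=2\,e^7$, one gets $df^7=2(f^{12}+f^{34}+f^{56})$, and for $\eta=f^7$, $\xi=f_7$ the endomorphism $\phi$ defined by $g(\phi\cdot,\cdot)=\tfrac12 d\eta$ satisfies $\phi f_1=f_2$, $\phi f_2=-f_1$, etc., so $\phi^2=-\mathrm{Id}+\eta\otimes\xi$ holds with $g=\sum_i(f^i)^2$. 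The $\Gtwo$-form $\varphi=f^{127}+f^{347}+f^{567}+f^{135}-f^{146}-f^{236}-f^{245}$ of Theorem \ref{stable-7dim} induces this $g$, and its dual $\phi=\star_\varphi\varphi$ is closed: its purely horizontal summands are wedges of closed $1$-forms, while each summand containing $f^7$ is closed because $f^{ijk}\wedge df^7=0$ (every such product repeats an index). Thus $g$ is a coclosed $\Gtwo$-metric and $(\eta,\xi,\phi,g)$ is the standard invariant Sasakian structure on the Heisenberg nilmanifold. I expect this last step --- matching the $\Gtwo$-metric with an associated metric of the contact form and verifying the normality (Sasakian) condition --- to be the main obstacle, the rank argument of the first two paragraphs being essentially formal.
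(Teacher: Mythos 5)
Your proposal is correct, but it takes a genuinely different route from the paper. The paper's proof is essentially a citation: it invokes Kutsak's classification of invariant contact structures on $7$-dimensional nilmanifolds (decomposable nilpotent Lie algebras admit no contact form, and among indecomposable $2$-step ones only $17$ does), and then concludes via Theorem \ref{indecomposable-cocalibrated}. You instead reprove the relevant special case of Kutsak's result from scratch: central vectors annihilate every exact $2$-form, so a contact form needs $\dim\lie{z}=1$; $2$-step nilpotency then gives $[\lie{g},\lie{g}]=\lie{z}$, the structure equations reduce to a single $2$-form $\omega_0$ on $\lie{g}/\lie{z}$, and the contact condition makes $\omega_0$ nondegenerate, whence the linear Darboux normal form identifies $\lie{g}$ with the Heisenberg algebra. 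This is more elementary and self-contained, and your explicit verification of the coclosed $\Gtwo$-form and the contact metric compatibility on $17$ is more detailed than anything in the paper. Two small remarks: the identity $\iota_Z d\alpha=0$ for $Z\in\lie{z}$ needs only centrality, not the $2$-step hypothesis (which enters later, to force $[\lie{g},\lie{g}]=\lie{z}$ and hence $de^i=0$ for $i\leq 6$); and the normality (Sasakian) condition for the structure you exhibit is asserted rather than checked --- but the paper does exactly the same, relying on the classical fact (cf.\ the reference \cite{AFV} in the subsequent remark) that the standard invariant contact metric structure on the $7$-dimensional Heisenberg group is Sasakian, so this is not a gap relative to the paper's own standard of rigor.
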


\begin{proof}  Since the  involved structures are invariant, we can work at the level of the Lie algebra $\mathfrak g$ of $G$.  
In \cite{Kutsak}, Kutsak showed that a decomposable $7$-dimensional nilpotent Lie algebra
does not admit any contact structure and  the only indecomposable  $2$-step nilpotent Lie algebra admitting  a contact form is the Lie algebra (17), which is the Lie algebra of $7$-dimensional  Heisenberg Lie group. Therefore,  the result follows by Theorem \ref{indecomposable-cocalibrated}.
\end{proof}

Note that by \cite{AFV}  the Heisenberg Lie algebra (17) is indeed  the only  $7$-dimensional nilpotent Lie algebra admitting a Sasakian structure. So a natural problem is  whether on a (not Sasakian)  $7$-dimensional nilpotent Lie algebra there exists a  $K$-contact metric structure such that the  metric is induced by a  coclosed structure.

\begin{example}\label{example:2}
Consider the following indecomposable 3-step nilpotent Lie algebra
$$
 {\frak n} = (0,0,0,12,13,0,16+25+34).
$$
By \cite{Goze} we know that $\frak n$ admits  a $K-$contact metric structure defined by  $g = \sum_{i =1}^7 (e^i)^2$ and  the  Reeb vector $ \xi =e_7$.  We can  show that  $\frak g$ admits a coclosed $\Gtwo$-structure $\varphi$   whose  underlying metric  coincides with $g$.  Indeed, consider the $3$-form
$$
\varphi=-e^{167}-e^{237}+e^{457}-e^{124}-e^{135}-e^{256}+e^{346}.
$$
One can check that $g_{\varphi} = g$ and that  the Hodge  dual  $\star\varphi$ of $\varphi$ given by
$$
\star\varphi = e^{1236} - e^{1456} - e^{1257} + e^{1347} + e^{2467} + e^{2357}
$$
is closed.
\end{example}

{\small

\vspace{0.15cm}

\noindent{\sf L. Bagaglini:} 
{Dipartimento di Matematica e Informatica \lq \lq Ulisse Dini\rq \rq \\ Universit\`a di Firenze\\
Viale Giovan Battista Morgagni, 67/A \\
50134 Firenze,  Italy}\\
\email{leonardo.bagaglini@gmail.com}

\vspace{0.15cm}

\noindent{\sf M. Fern\'andez:} 
{Departamento de Matem\'aticas,
Facultad de Ciencia y Tecnolog\'{\i}a, \\
Universidad del Pa\'{\i}s Vasco, Apartado 644,
48080 Bilbao, Spain. }\\
\email{marisa.fernandez@ehu.es}

\vspace{0.15cm}

\noindent{\sf A. Fino:} {Dipartimento di Matematica \lq\lq Giuseppe Peano\rq\rq \\ Universit\`a di Torino\\
Via Carlo Alberto 10\\
10123 Torino, Italy}\\
 \email{annamaria.fino@unito.it}
\end{document}